
\documentclass[11pt,letterpaper,reqno]{amsart}
\usepackage{caption}
\usepackage{xcolor} 
\usepackage[utf8]{inputenc}
\usepackage[T1]{fontenc}
\usepackage{amssymb,amsmath,mathrsfs,amsthm}
\usepackage{enumitem}
\bibliographystyle{alpha}
\usepackage{mathtools}
\usepackage[margin=2cm]{geometry}
\usepackage{amsfonts}
\usepackage{ dsfont }

 \newcommand{\nnrm}[1]{{\vert\kern-0.25ex\vert\kern-0.25ex\vert #1 
 		\vert\kern-0.25ex\vert\kern-0.25ex\vert}}
\definecolor{ao}{rgb}{0.0, 0.5, 0.0}

\newcommand{\alp}{\alpha}
\newcommand{\bt}{\beta}
\newcommand{\gmm}{\gamma}

\newcommand{\lmb}{\lambda}

\usepackage{esint,comment}

\allowdisplaybreaks
\newtheorem{thm}{Theorem}

\newtheorem{lem}[thm]{Lemma}
 \newtheorem{lemma}[thm]{Lemma}
\newtheorem{prop}[thm]{Proposition}

\theoremstyle{definition}
\newtheorem{defn}[thm]{Definition}
\newtheorem{rem}[thm]{Remark}
\numberwithin{equation}{section}




\def \no#1#2#3 {{\bf #1} (#3), #2.}
\def \eds#1#2#3 {#1, #2, #3.}

\def\R{{\mathbb R}}

\def\d{{\rm d}}

\def\:{{\colon}}

\def\be#1{\begin{equation}\label{#1}}
\def\ee{\end{equation}}

\def\<{\langle}
\def\>{\rangle}
\def\coloneqq{:=}

\newcommand{\norm}[1]{\Vert#1\Vert}

\newcommand{\wo}{\widetilde{\omega}}

\newcommand{\oo}{\overline{\omega}}

\newcommand{\ov}{\overline{v}}

\newcommand{\weta}{\widetilde{\eta}}

\newcommand{\ud}{\mathrm{d}}

\newcommand{\lec}{\lesssim}
\newcommand{\gec}{\gtrsim}
\newcommand{\bs}{\begin{split}}
\newcommand{\essss}{\end{split}}

\renewcommand{\lec}{\lesssim}

\renewcommand{\div}{\operatorname{div}}
 
\newcommand{\eqnb}{\begin{equation}}
\newcommand{\eqne}{\end{equation}}

\renewcommand{\ee}{\mathrm{e}}

\newcommand{\p}{\partial}

\newcommand{\tp}{{\psi}}
\newcommand{\tb}{{\phi}}
\newcommand{\otp}{{\overline{\psi }}}
\newcommand{\otb}{{\overline{\phi}}}
\newcommand{\ttp}{{\Psi }}
\newcommand{\ttb}{{\Phi}}
\newcommand{\lt}{{\widetilde{\lambda}}}
\newcommand{\nt}{{\widetilde{N}}}

\renewcommand{\R}{\mathbb{R}}

\renewcommand{\d}{\mathrm{d}}

\newcommand{\supp}{\operatorname{supp}}

\newcommand{\curl}{\mathrm{curl}}

\newcommand\blfootnote[1]{%
  \begingroup
  \renewcommand\thefootnote{}\footnote{#1}%
  \addtocounter{footnote}{-1}%
  \endgroup
}


\begin{document}
\title[]{Instantaneous continuous loss of Sobolev regularity for the 3D incompressible Euler equation}

\author{In-Jee Jeong,  Luis Mart\'inez-Zoroa,  Wojciech S. O\.za\'nski} 

\maketitle

\blfootnote{\noindent I.-J. Jeong: Department of Mathematical Sciences and RIM, Seoul National University, South Korea, email: injee{\textunderscore}j@snu.ac.kr\\
L.~Mart\'inez-Zoroa: Department of Mathematics and Computer Science, University of Basel, Switzerland, email: luis.martinezzoroa@unibas.ch\\
W.~S.~O\.za\'nski: Department of Mathematics, Florida State University, Tallahassee, FL 32306, USA, email: wozanski@fsu.edu}

\begin{abstract}
 
We prove instantaneous  and  continuous-in-time  loss of supercritical Sobolev regularity for the 3D incompressible Euler equations in $\mathbb{R}^{3}$. Namely, for any $s\in (0,3/2)$ and $\varepsilon >0$, we construct a divergence-free initial vorticity $\omega_0$ defined in $\mathbb{R}^{3}$ satisfying $\| \omega_0 \|_{H^s}\leq \varepsilon$ and there exist $T>0$, $c>0$ and a corresponding local-in-time solution $\omega$ such that, for each $t\in [0,T]$, \begin{equation*}
	\begin{split}
		 \omega (\cdot ,t ) \in {H^{\frac{s-ct}{1+ct}}} \qquad \mbox{and} \qquad  \omega (\cdot ,t ) \not \in {H^\beta }
	\end{split}
\end{equation*} for any $\beta > \frac{s-ct}{1+ct} $. Moreover, $\omega$ is unique among all solutions with initial condition $\omega_0$ which are locally $C^2$ and belong to $C([0,T];L^p )$ for any $p>3 $.

\end{abstract}
\vspace{0.5cm}
{\small
\noindent Keywords: 3D Euler equation, ill-posedness, instantaneous loss of regularity, pseudosolution, continuous loss of regularity.}
{\small
MSC codes: 
35Q35  	PDEs in connection with fluid mechanics}
\vspace{0.5cm}
\section{Introduction}\label{sec_intro}
We consider the $3$D incompressible Euler equations,
\eqnb\label{3d_euler}
\begin{split}
\p_t v + v\cdot \nabla v + \nabla p &=0, \\
\div\,v &=0.
\end{split}
\eqne
for $x\in \R^3$, $t>0$. Letting $\omega \coloneqq \curl \, v$ we obtain the vorticity formulation
\eqnb\label{3d_euler_vorticity}
\begin{split}
\p_t \omega  + v\cdot \nabla \omega &=\omega \cdot \nabla v,
\end{split}
\eqne
where $v$ is related to $\omega$ via the Biot-Savart law
\eqnb\label{bs}
v[\omega ] (x) \coloneqq \frac{-1}{4\pi }\int_{\R^3} \frac{(x-y )\times \omega (y) }{|x-y|^3}\d y
\eqne
at each time.

The theory of local well-posedness of the Euler equations \eqref{3d_euler} goes back to the works of Kato \cite{Kato}, as well as Kato and Ponce \cite{KP}, who showed that the equations are locally well-posed in the velocity formulation in $H^s$ for any $s >5/2$. The borderline case of $H^{5/2}$ was only settled recently by Bourgain and Li \cite{BL1,BL3}, who showed strong ill-posedness in $H^{5/2}$, see also simplified proofs in \cite{EJ,JK2}. In the vorticity formulation \eqref{3d_euler_vorticity}, this means that there exists a divergence-free $\omega_0\in {H^{3/2}}$ and a unique local-in-time solution $\omega(t)$ {in a suitable class} to the equations \eqref{3d_euler_vorticity} such that $\omega(t) \not \in {H^{3/2}}$ for {arbitrarily small} $t>0$.

In this work, we are interested in $H^{s}$ ill-posedness  phenomena in the strictly supercritical regime, that is for $\omega_0 \in H^s(\mathbb{R}^{3})$ with $0<s<3/2$. {A recent work of Luo \cite{Luo} proved norm inflation for vorticity in $H^s(\mathbb{R}^{3})$ using axisymmetric flows with swirl: for every $s\in (-1,3/2)$ there exists a divergence-free $\omega_0 \in H^s$ such that $\| \omega_0 \|_{H^s}$ is arbitrarily small, and the $H^s$ norm of the unique solution grows arbitrarily fast in time.
Earlier works on 3D supercritical illposedness were mostly based on the so-called $(2+\frac12)$-dimensional flows. Bardos--Titi \cite{BT1,BT2} obtained ill-posedness for $C^\alpha, W^{1,p}$ velocities with $\alp<1$ and any $p$. Moreover, Elgindi and Masmoudi \cite[Proposition~10.1]{EM} proved loss of $L^p$ regularity of vorticity with $p>2$. Jeong and Yoneda \cite{JY1,JY2} obtained ill-posedness for vorticity below $H^{1}$ and connected it with enhanced dissipation for the Navier--Stokes equations. 
}

{In the case of 2D incompressible Euler equations in $\mathbb{R}^{2}$, the critical Sobolev space for the vorticity is $H^{1} = W^{1,2}$. For supercritical $W^{1,p}$ spaces, namely when $p<2$, \cite{JEuler} obtained continuous loss of regularity by proving the existence of $\omega_0\in W^{1,p_0} \cap L^\infty$ for every $p_0 \in (1,2)$ whose unique Yudovich solution $\omega(t)$ belongs to $W^{1,p(t)}$, where $p(t)$ is a continuously decreasing function.} Furthermore, C\'ordoba, Mart\'inez-Zoroa and O\.za\'nski \cite{CMO}  showed instantaneous gap loss of Sobolev regularity for supercritical $H^{s}$ data for the 2D incompressible Euler equations for any $0<s<1$. 

In our first result, we construct a smooth and compactly supported initial data $\omega_0$, which gives a local-in-time solution that grows in $L^{2}$ based Sobolev spaces for $t>0$.

\begin{thm}[Norm inflation from smooth data]\label{thm_black_box}
For every $s \in (0,3/2)$, there exists $T\in (0,1/2]$ such that for every $K\geq 1$, and every sufficiently large $\lambda\geq 1$ there exists $\omega^{(0)}_{K,\lambda} \in C_c^\infty(\mathbb{R}^{3})$ such that there exists a unique smooth solution $\omega_{K,\lambda }$ of the 3D incompressible Euler equations \eqref{3d_euler_vorticity} on time interval $[0,T]$ with
\[
\omega_{K,\lambda } = \tb + \tp
\]
fulfilling 
\eqnb\label{black_box_claims}
\begin{split}
&\supp\, \omega_{K,\lambda } (\cdot , t)  \subset B(0,\lambda^{-1} ),\\
&\| \omega_{K,\lambda } \|_{H^5 } \lec K^{-1} \lambda^{c_1},\\
&\| \omega_{K,\lambda } \|_{L^p } \lec K^{-1} \lambda^{-{C_p}}\quad \text{ for every } p\in [1,\infty ),\\
&\| \tb (\cdot , t) \|_{H^\beta }  \sim K^{-1} \lambda^{c_2(\beta -s)}, \\
&\| \tp (\cdot , t) \|_{H^\beta } \sim K^{-t(1+\beta )/s} \lambda^{c_3(\beta - s) + t(1+\beta )c_4}  (\log \lambda )^{-t(1+\beta )/s}
\end{split}
\eqne
for all $t\in [0,T]$, $\beta \in [0,s]$, where $c_1,c_2,c_3,c_4,C_{p}>0$ are constants depending on $s$ only.
\end{thm}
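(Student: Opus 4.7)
The plan is to construct $\omega_{K,\lambda}$ as a nonlinear perturbation of a pseudosolution of the form $\tb+\tp$, where $\tb$ is a slowly-evolving background that generates vortex stretching on a high-frequency perturbation $\tp$. Following the $(2+\tfrac12)$-dimensional philosophy recalled in the introduction, I would take $\tb_0$ to be a horizontal vorticity compactly supported in $B(0,\lambda^{-1})$ with shear-like structure, whose amplitude and profile are tuned so that $\|\tb_0\|_{H^\beta}\sim K^{-1}\lambda^{c_2(\beta-s)}$ and so that the associated velocity $v[\tb]$ exhibits a uniform hyperbolic gradient of magnitude $\rho$ on the region where the perturbation will live. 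Since $\tb$ lives at a single scale and is evolved under Euler only for time $T\le 1/2$, $\|\tb(\cdot,t)\|_{H^\beta}$ can be shown to remain comparable to $\|\tb_0\|_{H^\beta}$ on $[0,T]$ by standard flow-map and energy estimates.

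Next, take $\tp_0$ to be a wavepacket at a finer scale $(\lambda\mu)^{-1}$ with $\mu=\mu(\lambda)\gg 1$ and amplitude tuned so that $\|\tp_0\|_{H^s}\sim 1$; this forces $c_3 = 1+\log\mu/\log\lambda$ and gives $\|\tp_0\|_{H^\beta}\sim\lambda^{c_3(\beta-s)}$. Then solve the linearized vortex-stretching equation
\[
\pa_t\tp + v[\tb]\cdot\na\tp = \tp\cdot\na v[\tb],
\]
whose Lagrangian solution is $\tp(\Phi_t x,t) = D\Phi_t(x)\tp_0(x)$. Since $D\Phi_t$ has eigenvalues of order $e^{\pm \rho t}$, the characteristic wavelength of $\tp$ in the compressed direction contracts like $(\lambda\mu)^{-1} e^{-\rho t}$ while the vector amplitude grows like $e^{\rho t}$, producing an $H^\beta$ growth of the form $\|\tp(\cdot,t)\|_{H^\beta}\sim e^{\rho t(1+\beta)}\lambda^{c_3(\beta-s)}$. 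The $K$ and $\log\lambda$ factors appearing when rewriting this as $K^{-t(1+\beta)/s}\lambda^{c_3(\beta-s)+c_4 t(1+\beta)}(\log\lambda)^{-t(1+\beta)/s}$ arise from the Calder\'on--Zygmund log-loss in $\|\na v[\tb]\|_\infty$ and from the precise matching of $\mu$ with the amplitude of $\tb$.

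Having built the approximate solution, set $\omega = \tb+\tp+R$ and derive the equation for the error $R$. The forcing consists of the residual coming from treating $\tb$ as a pseudosolution, the quadratic self-interaction $v[\tp]\cdot\na\tp - \tp\cdot\na v[\tp]$, and mixed terms involving $v[R]$ and $\na R$. All these interactions are controlled using the smallness of $\|\tp\|_{L^p}$ for $p>3$, which is algebraically small in $\lambda$ since the wavepacket has tight $L^2$ mass and the $L^p$ norm for $p>2$ is further reduced by the small volume of its support. A Gr\"onwall bootstrap in $H^5$ on $[0,T]$ with $T$ a universal constant closes, yielding $\|R\|_{H^5}\ll \|\tb+\tp\|_{H^5}$. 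Smoothness of $\omega_{K,\lambda}$ is then propagated by the standard local theory in $H^5$, the support property follows from finite speed of propagation of vorticity, and the $H^5$ and $L^p$ bounds in \eqref{black_box_claims} are read off directly from the construction.

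The main obstacle is reconciling two competing requirements: the background $\tb$ must generate a stretching rate large enough (proportional to $\log\lambda$) to produce the claimed $\lambda^{c_4 t(1+\beta)}$ growth, yet it must remain weak enough (through the $K^{-1}$ factor and the sharp localization at scale $\lambda^{-1}$) that both the self-interaction of $\tp$ and the back-reaction $v[R]$ remain negligible across the whole interval $[0,T]$. The delicate tuning of the three scales $K$, $\lambda$, $\mu$, the geometric alignment of $\tp_0$ with the compressed eigendirection of $v[\tb]$, and the logarithmic correction in $\mu$ which yields the $(\log\lambda)^{-t(1+\beta)/s}$ prefactor, are what force the estimates in \eqref{black_box_claims} to take precisely the stated form.
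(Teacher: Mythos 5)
Your high-level narrative is close to the paper's: a background vorticity generating a hyperbolic flow with stretching rate $\sim\log\lambda$ near the origin, a perturbation at a much finer scale whose Cauchy-formula (Lagrangian) solution grows both through the Jacobian and through derivative gain in the compressed direction, and the identification of the $(1+\beta)$ exponent with the two simultaneous mechanisms. That part is right. However there are two genuine gaps that would defeat the argument as written.

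First, the claim that ``$\|\tb(\cdot,t)\|_{H^\beta}$ remains comparable to $\|\tb_0\|_{H^\beta}$ on $[0,T]$ by standard flow-map and energy estimates'' is false for the parameters needed here. The background must generate $\|\nabla v[\tb]\|_\infty\sim\log\lambda$ (in the paper, $\sim\log N$) and $T$ is a fixed constant independent of $\lambda$, so a standard energy estimate in any scale-invariant or $L^2$-based norm gives $\|\tb(t)\|_{X}\lesssim\|\tb_0\|_X\,\lambda^{cT}$, which is \emph{not} comparable. The paper evades this precisely by choosing the background to be a perturbative modification of the stationary flow $e_1\sin x_2\sin x_3$ at the dominant frequency $\lambda N$ (see \eqref{otb}), so that the commutator $G=\ov[\otb]\cdot\nabla\otb-\otb\cdot\nabla\ov[\otb]$ gains a factor $N^{-1}$ (see \eqref{G_error}), a similar $N^{-1}$ gain occurs in the Biot--Savart error (Lemma~\ref{lem_vel_err}), and consequently the background error $\ttb$ stays of size $N^{-1}\cdot N^{ct}\ll\|\otb\|_\infty$ (Proposition~\ref{prop_bckgrd_err}). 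Without a near-steady-state cancellation of this type, the fourth line of \eqref{black_box_claims} cannot be established; this is arguably the central structural input of the construction, and your sketch does not mention it.

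Second, a single Gr\"onwall bootstrap in $H^5$ for the remainder $R$ cannot yield the two-sided $H^\beta$ estimates for $\beta\in[0,s]$ in the last two lines of \eqref{black_box_claims}. You would need $\|R(t)\|_{H^\beta}\ll\|\otp(t)\|_{H^\beta}\sim N^{t(1+\beta)/2}(\lt\nt)^{\beta-s}$ for every $\beta\in[0,s]$, and an $H^5$ bound on $R$ (even combined with an $L^2$ bound by interpolation) is far too crude at the scale $\lt\nt\gg\lambda N$ where $\otp$ lives. The paper instead controls the background error and the perturbation error \emph{separately} (Propositions~\ref{prop_bckgrd_err} and~\ref{prop_pert_error}), in $L^\infty$, $C^\gamma$, and $H^\beta$ for $\beta\in[0,2]$, with the estimates at each step coupled between the vorticity and velocity formulations because the singular integral $\ttp\mapsto\nabla v[\ttp]$ is unbounded on $L^\infty$ (see the paired quantities in \eqref{V_and_ttb} and \eqref{ttp_and_W_est}, and Remark~\ref{rmk:C1} explaining why one cannot simply work at a higher regularity). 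The extra $\nt^{-1}$ needed to make the perturbation's back-reaction on the background harmless is also not captured by your proposal; it comes from a quantitative Riemann--Lebesgue integration-by-parts in the oscillatory direction (see \eqref{votp_away} and Lemma~\ref{lem_votp}).

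As a minor point, the $(\log\lambda)^{-t(1+\beta)/s}$ prefactor does not come from a Calder\'on--Zygmund log-loss; it comes from the prescribed relation $\lambda^{3/2-s}N^{-s}=K\log N$ in \eqref{lambda_vs_N}, which sets the stretching rate to exactly $\log N$ and then produces the stated power of $\log\lambda$ after converting $N^{t(1+\beta)/2}$ into powers of $\lambda$, $K$, and $\log\lambda$.
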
  
{That is, the solution consists of two parts $\phi$ and $\psi$, such that $\phi$ does not vary much in $H^{\bt}$ norms and $\psi$ grows in $H^{\bt}$ norms with time.} Here $B(0,\lmb^{-1})$ is the ball centered at 0 with radius $\lmb^{-1}$. 

In our second result, we employ the construction developed in Theorem~\ref{thm_black_box} to obtain initial vorticity $\omega_0 \in H^s(\mathbb{R}^{3})$, $s\in (0,3/2)$ which gives rise to  a unique local-in-time solution that loses Sobolev regularity in time. We first introduce a notion of solution for such irregular initial data. 

\begin{defn}(Classical solution)\label{defn_sol}
We say that $\omega \colon \R^3 \times [0,T_0] \to \R^3$ is a \emph{classical solution} to the Euler equations \eqref{3d_euler_vorticity} if $\omega \in C_t L_x^p$ for some $p \in (3,\infty )$ and $\omega \in C^1 ([0,T_0] ; C^2 (K))$ for every compact set $K\subset \R^3$.
\end{defn}

We now state our main result.

\begin{thm}[Instantaneous continuous loss of Sobolev regularity of the 3D Euler equation]\label{thm_main}
For every $s\in (0,3/2)$, $\varepsilon >0$ there exists divergence-free $\omega_0$ such that $\| \omega_0 \|_{H^s} \leq \varepsilon$ and there exist $T_0>0$ and a unique classical solution $\omega$ to the Euler equations \eqref{3d_euler_vorticity} on $[0,T_0]$ with initial condition $\omega_{0}$ such that
\[
\omega (\cdot , t) \in H^{\frac{s-\overline{c} t}{1+ \overline{c} t }} \qquad \text{ and } \qquad \omega (\cdot , t) \not \in H^{\beta'} \quad \text{ for all } \beta'>\frac{s-\overline{c} t }{1+ \overline{c} t }
\]
for each $t\in [0,T_0]$, where $\overline{c}>0$ is a constant.
\end{thm}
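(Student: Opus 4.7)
The plan is to build $\omega_0$ as a countable superposition of translated copies of the black-box building blocks of Theorem~\ref{thm_black_box}, taken at a sequence of scales $\lambda_n \to \infty$ and with \emph{varying} Sobolev exponents $s_n \searrow s$. Concretely, fix $s_n := s + n^{-1} \in (s,3/2)$ and, for each $n$, apply Theorem~\ref{thm_black_box} with parameters $(s_n, K_n, \lambda_n)$ to produce a smooth local solution $\omega^{(n)}$ with $\supp \omega^{(n)}(\cdot, t) \subset B(0, \lambda_n^{-1})$ on a common time interval $[0, T_0]$, where $T_0 := \inf_n T(s_n) > 0$ (this positivity is a continuity requirement on the construction in Theorem~\ref{thm_black_box}; see the main obstacle below). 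Choosing centres $x_n \in \R^3$ with $|x_n|\to\infty$ and $|x_n - x_m| \gg \lambda_n^{-1} + \lambda_m^{-1}$ for $n \ne m$, put
\[
\omega_0(x) := \sum_{n \ge 1} \omega^{(n)}(x - x_n, 0).
\]

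The role of taking $s_n > s$ is decisive: evaluating the black-box formula for $\psi_n$ at $\beta = s$ and $t = 0$ gives
\[
\| \omega^{(n)}(\cdot, 0) \|_{H^s} \lesssim \lambda_n^{c_3(s_n)(s - s_n)} + K_n^{-1} \lambda_n^{c_2(s_n)(s - s_n)},
\]
both of which can be made $\le 2^{-n}$ by choosing $\lambda_n$ large enough. Together with the near frequency-orthogonality of bubbles supported in well-separated balls at spatial scales $\lambda_n^{-1} \to 0$, this yields $\|\omega_0\|_{H^s} \le \varepsilon$.

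For the evolution, I would argue that the unique classical solution on $[0, T_0]$ is, to leading order, $\omega(x, t) = \sum_n \omega^{(n)}(x - x_n, t)$, the error coming from Biot--Savart cross-interactions. The velocity induced at bubble $m$ by bubble $n \ne m$ is bounded by $|x_m - x_n|^{-2} \|\omega^{(n)}\|_{L^1} \lesssim |x_m - x_n|^{-2} K_n^{-1} \lambda_n^{-C_1}$, which is negligible compared with the self-velocities inside $B(x_m, \lambda_m^{-1})$. A Picard-type iteration on the vorticity transport equation in $C_t(L^p \cap L^\infty_{\mathrm{loc}})$ for some $p>3$, combined with the standard log-Lipschitz Biot--Savart estimate giving $C^2$ interior control, identifies the unique solution in the class of Definition~\ref{defn_sol}. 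The regularity dichotomy then follows directly from the black-box formulas: setting $\overline c_n := c_4(s_n)/c_3(s_n)$ and $\beta^{(n)}(t) := (s_n - \overline c_n t)/(1 + \overline c_n t)$, this is precisely the value of $\beta$ at which the $\lambda_n$-exponent of $\|\psi_n(t)\|_{H^\beta}$ vanishes, and by continuity $\beta^{(n)}(t) \searrow \beta^*(t) := (s - \overline c\, t)/(1 + \overline c\, t)$ uniformly on $[0, T_0]$ with $\overline c := c_4(s)/c_3(s)$. For $\beta \le \beta^*(t) < \beta^{(n)}(t)$ every bubble has negative $\lambda_n$-exponent, the contributions sum geometrically in $n$, and $\omega(t) \in H^{\beta^*(t)}$. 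For any $\beta > \beta^*(t)$, we have $\beta > \beta^{(n)}(t)$ for all $n$ larger than some $N = N(t,\beta)$, so $\|\psi_n(t)\|_{H^\beta} \gtrsim \lambda_n^{+}$ blows up along this subsequence and the almost-orthogonal sum diverges, giving $\omega(t) \notin H^\beta$.

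The principal obstacle I anticipate is (a) extracting the continuity in $s$ of the constants $c_i(s), C_p(s)$ and of the existence time $T(s)$ of Theorem~\ref{thm_black_box}, needed both to define $\overline c$ and to ensure $T_0 > 0$ and the uniform convergence $\beta^{(n)}(t) \to \beta^*(t)$; this is presumably implicit in the construction but must be made explicit. A closely related difficulty is (b) the quantitative balancing of the four sequences $(s_n, K_n, \lambda_n, x_n)$ so that simultaneously $\|\omega_0\|_{H^s} \le \varepsilon$, the Biot--Savart cross-interactions remain subleading on the full interval $[0, T_0]$, and the divergence of $\|\omega(t)\|_{H^\beta}^2$ at every $t>0$, $\beta > \beta^*(t)$ is realised by the tail of the series --- the latter requiring that the $\lambda_n$-exponent in $\|\psi_n(t)\|_{H^\beta}$ is bounded below by a positive multiple of $\beta - \beta^*(t)$ uniformly in large $n$, which is exactly what the continuity $\overline c_n \to \overline c$ provides.
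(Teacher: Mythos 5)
Your construction via bubbles at \emph{varying} exponents $s_n\searrow s$ is a genuinely different mechanism from what the paper does. The paper applies Theorem~\ref{thm_black_box} at the \emph{fixed} exponent $s$ for every block, achieves smallness of $\|\omega_0\|_{H^s}$ through the amplitude prefactors $K_j=2^j/\varepsilon$, and obtains the dichotomy at $\beta^*(t)=(s-\overline{c}t)/(1+\overline{c}t)$ because the $\lambda_j$-exponent of $\|\tp_j(t)\|_{H^\beta}$ vanishes at $\beta=\beta^*(t)$ \emph{uniformly in $j$}, with convergence at the critical line supplied by the remaining $K_j^{-t(1+\beta)/s}(\log\lambda_j)^{-t(1+\beta)/s}$ factors (see \eqref{ooo}). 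Your route trades this for the requirement that $c_i(\cdot)$, $C_p(\cdot)$ and in particular $T(\cdot)$ be locally uniform in $s$; you correctly flag this, and it is a real additional burden the paper does not incur. It is plausible (tracing $c_3,c_4$ through \eqref{lambda_vs_N} and \eqref{otp_Hbeta_norm}, and $T=1/(4a)$ for an absolute Gronwall constant $a$), but it is nowhere stated and would need to be established.

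The serious gap is existence and, above all, uniqueness, which you dispatch in one sentence via ``Picard-type iteration'' and a ``standard log-Lipschitz Biot--Savart estimate.'' This cannot work as stated: $\omega_0\notin L^\infty$ by design (each block has $\|\otb_j\|_\infty\sim\log N_j\to\infty$ by \eqref{otb} and \eqref{lambda_vs_N}, the supports are disjoint, and in any case $H^s(\mathbb{R}^3)$ with $s<3/2$ does not embed into $L^\infty$), so \eqref{vel_est_general} and any Yudovich-type argument are unavailable globally; moreover the stretching term $\omega\cdot\nabla v[\omega]$ defeats a naive contraction in $C_tL^p$, since $\nabla v[\omega]\notin L^\infty$. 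The paper devotes all of Step~4 of Section~\ref{sec_gluing} to uniqueness: it first proves that any competitor $\wo$ in the class of Definition~\ref{defn_sol} stays localized near the centres $D_je_1$, introduces the localized differences $W_j$, and runs a bootstrapped $H^2$ energy estimate (Steps~4a--4b) whose closure crucially depends on the $D_j$ diverging fast enough to make the cross-bubble interactions summable against $\delta=\max\|W\|_{H^2}$. Likewise, existence in the paper is by passing to the limit of the truncated solutions $\omega^n$ together with the $H^5$ control of $A^n=\omega^n-(\omega^{n-1}+\omega_{\lambda_n})$ in \eqref{est_A}, not by iteration. Your outline correctly captures the Sobolev-exponent arithmetic but leaves the analytic core of the theorem --- uniqueness in a supercritical class --- unaddressed.
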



\subsection{Ideas of the proof}\label{sec_ideas_of_pf}

The proof of Theorems~\ref{thm_black_box}--\ref{thm_main} exploits the mechanism of growth of solution due to a hyperbolic point. In this sense the main idea employed here is similar to the recent result of \cite{CMO_SQG} for the surface quasi-geostrophic equation (SQG). However capturing the growth due to a hyperbolic point is much more challenging the case of the 3D Euler equations \eqref{3d_euler_vorticity}. First of all, the growth scenario must be three-dimensional in the sense that the directions of all components of the vorticity vector must be, in a sense, compatible with the respective directions of high-frequency oscillations (see Figure~\ref{fig1} below). Furthermore, the equation is different: it is not a scalar transport equation, but instead involves transport and stretching of  a vector. In particular, the equation does not preserve the $L^{p}$ norms. Moreover, the vorticity vector  must remain divergence free, and we need to keep control of its growth in all directions.

In order to prove Theorem~\ref{thm_black_box}, we start with an initial condition for the background vorticity of the form

\eqnb\label{id1}
\tb (x,0) \sim \lambda^{\frac32 -s }N^{-s} g(\lambda x) \sin (\lambda N x_2 ) \sin (\lambda Nx_3) e_1,
\eqne
where $g\in C^\infty (\R^3 ; [0,1])$ has compact support.  Note that 
\[ \| \tb ( \cdot , 0)\|_{\dot H^s} =O(1)\]
for all values of the parameters  $\lambda , N \geq  1$, which we will take to be very large. 
The symbol ``$\sim$'' in \eqref{id1} indicates the fact that we have neglected scaling of $\tb (x,0)$ by a factor of $K^{-1}$, which is needed only for using the construction of Theorem~\ref{thm_black_box} as summable building blocks in the proof of the instantaneous loss of regularity (Theorem~\ref{thm_main}), and we also neglected the $e_3$ component of the background vorticity, which does not play any substantial role in the construction, except for guaranteeing the divergence-free condition (otherwise, the $e_3$ component is treated as a lower-order error term in all computations).

The main point of \eqref{id1} is to generate a stable steady hyperbolic flow near the origin, which can then be used to grow a perturbation vorticity $\tp$. We now explain the precise meaning of this.

First we note that we expect the solution $\tb (t)$ of the Euler equations \eqref{3d_euler_vorticity} with initial data \eqref{id1} to remain ``almost'' steady, since $e_1 \sin x_2 \sin x_3$ is a steady state of \eqref{3d_euler}, and since the $\sin$'s in \eqref{id1} have much higher frequency than the cutoff function $g$, the $\sin$'s are expected to determine the leading order dynamics of the solution $\tb$. To be precise, we can denote by $\otb (x) \coloneqq \tb (x,0)$ the \emph{pseudosolution of the background vorticity}, and we can define the \emph{pseudosolution of the velocity of the background vorticity} as the velocity field of $\otb$ in which Biot-Savart operator does not see  $g$, that is
\eqnb\label{id2}
\ov [\otb ] (x) \coloneqq  \frac{\log N}{\sqrt{2} \lambda N } g(\lambda x) \begin{pmatrix}
0 \\
\sin (\lambda N x_2 ) \cos (\lambda N x_3 )\\
-\cos (\lambda N x_2 ) \sin (\lambda N x_3)
\end{pmatrix} .
\eqne
Then $G\coloneqq \ov [\otb ] \cdot \nabla \otb - \otb \cdot \nabla \ov [\otb ]$ can be shown to be, roughly speaking $N$ times smaller than $\otb$ in any $C^{k,\alpha}$ norm, see \eqref{G_error} for details. 

Moreover, the higher frequency under the $\sin$'s in \eqref{id1} also allows us to show that $\ov [\otb ]$ is a good approximation of $v[\tb ]$, so that $\ov [\otb ] - v[\otb ]$ is, roughly speaking, $N$ times smaller than $v[\otb ]$ in any $C^{k,\alpha }$ norm, see Lemma~\ref{lem_vel_err} for details.

We now explain the meaning of $\tb $ being ``stable''. Consider the error $\ttb \coloneqq \tb - \otb $. Then $\ttb $ satisfies a PDE (see \eqref{ttb_eq}), which we need to estimate in $L^\infty$ (actually $C^\alpha$, for some small $\alpha >0$, is used as a more practical choice, but the difficulties are the same). The most difficult terms in the PDE, which include $\ttb$ are $\tb \cdot \nabla v[\ttb ] $ and $\ttb \cdot \nabla \ov [\otb ]$. Both of these terms are of the size 
\[
 \lambda^{\frac32 -s} N^{-s} \| \ttb \|_\infty \sim \| \tb \|_\infty \| \ttb \|_\infty ,
\]
where we assummed that $ \| \ttb \|_\infty < \| \tb \|_\infty $ (so that $\| \tb \|_\infty \lec \| \otb \|_\infty \sim \lambda^{\frac32-s} N^{-s}$) and we used a heuristic that taking a derivative is roughly equivalent to multiplying by $\lambda N$, so that $\| \nabla \ov [\otb ]\|_\infty \sim \lambda^{\frac32-s} N^{-s}$. We also neglected the issue of unboundedness of the singular integral operator $\ttb \mapsto \nabla v[\ttb ]$ in $L^\infty$ to write $\| \nabla v[\ttb ]\|_\infty \lec \| \ttb \|_\infty $ (this issue can be handled by using a logarithm-type nonlocal estimate \eqref{vel_est_general} and some control of $\ttb $ in $C^\alpha $, see \eqref{id1a} for details).
This gives that 

\eqnb\label{id3}
\frac{\d }{\d t} \| \ttb \|_\infty \lec \lambda^{\frac32 -s} N^{-s} \| \ttb \|_\infty + \text{(other terms)},
\eqne
and it shows that the behaviour of $\| \ttb (t)\|_\infty$ will depend exponentially on $\lambda^{\frac32 -s} N^{-s}$. However, 
\[
\lambda^{\frac32 -s} N^{-s} \sim \| \nabla \ov [\otb ] \|_\infty \sim \| \otb \|_\infty,
\]
and thus, due to the desired control of $\ttb$, the same is expected to hold for $\nabla v[\tb ]$  and $\tb$. This means that $\lambda^{\frac32 -s} N^{-s}$ also determines the deformation rate of the particle flow of $v[\tb ]$. We are thus faced with a dichotomy: we want $\lambda^{\frac32 -s} N^{-s}$ to remain under control, so that we can control the error $\ttb$, and we also want it to be large, so that $v[\tb ]$ can cause  large deformations. Since we expect the sizes of all norms of $\tb$ to remain algebraic in $\lambda,N$, the exponential dependence mentioned above suggests that $\exp (\lambda^{\frac32 -s } N^{-s})$ must also be at most algebraic. We thus make a choice of the relation $\lambda $ vs. $N$ as 
\eqnb\label{id4}
\lambda^{\frac32 -s } N^{-s} \sim \log N.
\eqne
Then \eqref{id3} implies that
\eqnb\label{id5}
\| \ttb \|_\infty \lec \text{(other terms)} N^{ct}
\eqne
which is going to be smaller than $\log N \sim \| \tb \|_\infty$, provided the ``$(\text{other terms}$)'' involve a negative power of $N$ and we consider suitably short times. Indeed, one can show that these terms are of the size, roughly speaking $N^{-1}$ (see \eqref{V_and_ttb}), so that \eqref{id5} shows that there exists $T>0$, independent of $\lambda$, such that $\| \ttb \|_\infty \ll \| \tb \|_{\infty}$ on  $[0,T]$, which gives the desired stability of the background vorticity $\tb$.\\

In order to capture the deformation generated by $v[\tb ]$ we now note that the form of the pseudosolution \eqref{id2} of the velocity $v[\tb]$ suggests that 
\eqnb\label{id5a}
\nabla v[\tb ] \sim \begin{pmatrix}
0 & 0 & 0\\
0 & \log N & 0\\
0 & 0 & - \log N
\end{pmatrix}
\eqne
for $t\in [0,T]$, which we make precise in Lemma~\ref{lem_eta} below. This shows that $v[\tb]$ admits a hyperbolic flow in the $x_2-x_3$ plane, which is, roughly speaking, stretching in $x_2$ and squeezing in $x_3$, and we need to come up with a \emph{perturbation vorticity} $\tp$, which will not ``disturb'' the dynamics of the background vorticity $\tb$, and whose dynamics in time will be  determined by   $v[\tb ]$.  

To this end we consider an initial condition  for the perturbation vorticity to live at a much higher frequency than $\tb$ and whose direction ensures cancellation with $\tb$ to the leading order, namely we set 
\eqnb\label{id6}
\tp (x,0) \sim \lt^{\frac32-s} \nt^{-s} g(\lt x) \sin (\lt \nt x_3) e_2,
\eqne
where, similarly to  \eqref{id1}, we neglected the scaling by a factor of $K^{-1}$ and we neglected the $e_1$ component, whose only role is to maintain the divergence-free condition of $\tp$. We will take 
\eqnb\label{id_choice_aB}
\nt = \lt^a,\qquad \lt = \lambda^B
\eqne
for some $a>0$ and some very large $B>1$ (see \eqref{choice_a} and \eqref{choice_B} for the respective definitions). We define the \emph{pseudosolution $\otp$ of the perturbation vorticity} to be a vector field which is advected and stretched by $v[\tb]$, namely that $\otp (x,t)$ solves the PDE
\eqnb\label{id6a}
\p_t \otp + v[\tb ]\cdot \nabla \otp = \otp \cdot \nabla v[\tb ]
\eqne
with initial condition $\otp (x,0) = \tp (x,0)$. Using the Cauchy formula (see \eqref{cauchy_formula} below) we observe that
\eqnb\label{id7}
\otp (x,t) = \nabla \eta (y,t) \tp (y,0),
\eqne
where $y = \eta^{-1}(x,t)$, and $\eta$ denotes the particle trajectory of $v[\tb ]$ (i.e. it is defined by \eqref{new_eta_def}). 

Before discussing the growth of $\otp$, we let $\tp$ denote the function such that $\tb+\tp $ solves the Euler equation \eqref{3d_euler_vorticity} with initial condition $\tb (\cdot ,0) + \tp (\cdot ,0)$, and we   note that the leading order terms in the equation for the \emph{perturbation vorticity error} 
\[
\ttp \coloneqq \tp - \otp
\]
 (see~\eqref{eq_ttp} below)  are
\eqnb\label{id_terms}
\tb \cdot \nabla v[\otp ], \qquad \ttp \cdot \nabla v[\tb ], \qquad  v[\tp] \cdot \nabla \otp , \,\, \otp \cdot \nabla v[ \otp ],   
\eqne
see the terms denoted by $III_3$, $II$, $I$ and $III_1$ (respectively) in \eqref{ttp_eq}. As in the case of the background vorticity error $\ttb$, the most important estimate of the perturbation vorticity error $\ttp$, which we need to control, is $\|\ttp\|_\infty$.\\

 The first term in \eqref{id_terms} represents the influence of the perturbation vorticity $\tp$ onto the background vorticity $\tb$, and it can be controlled by noting that $v[\otp]$ is very small away from $\supp\, \otp$, while on $\supp\,\otp$ one can use the fact that $\tb (0,t)=0$ (due to an assumed odd symmetry with respect to the origin, see~\eqref{tb_symmetry} for details) and  Lipschitz continuity of $\tb$ to obtain an additional factor of ``almost $\lt^{-1}$''. We also need to exploit the gain due to the high oscillations in $x_3$ in $\otp$. Similarly to the background vorticity error $\ttb$ the oscillations make our scenario more stable. On the technical side, this is visible in the perturbation error analysis through a quantitative Riemann-Lebesgue lemma-type trick, which involves integrating the oscillatory term by parts to obtain an additional term $\nt^{-1}$ (see, for example, \eqref{votp_away}). Moreover, it suffices to include oscillations of $\tp$ in one direction only, and the reason for our choice of $x_3$ is explained below. These two ideas make the first term in \eqref{id_terms} manageable, see \eqref{tb_na_v_otp} for details.

The second term in \eqref{id_terms} represents  the influence of the background vorticity $\tb$ onto the perturbation error $\ttp$ and it is under control for the same reason as the background vorticity error $\ttb$, discussed above. Namely,  we have $\| \nabla v [\tb ]\|_\infty \sim \| \nabla \ov [\otb ]  \|_\infty \sim \log N$, which is ``just fine'' for controlling $\| \ttp \|_\infty$, since it gives a power of $N^{ct}$ via a Gronwall estimate.

The last two terms in \eqref{id_terms} represent the  self-interaction of $\tp$ onto itself. These terms might seem the most dangerous. However, in our scenario, there is no need for $\| \otp \|_\infty $ to remain large, unlike for the background vorticity, $\| \otb \|_\infty \sim \log N$. Indeed,  we only need some $H^\beta$ norms of $\otp$ to become large. We thus make a choice of $a$ (recall \eqref{id_choice_aB}) so that
\eqnb\label{id9}
\lt^{\frac32-s} \nt^{-s} \to 0\qquad \text{ as } \lambda \to \infty,
\eqne
i.e. that $a> (3-2s)/2s $. This makes all self-interaction terms negligible, as long as  $B>1$ is sufficiently large, so that any negative power of $\lt$  beats any positive power of $\lambda$ in the resulting estimates. This is sufficient for controlling the self-interactions. We also take $\gamma \in (0,1)$ very small so that, apart from  $\| \tp \|_\infty$ decaying, we also have that $\| \tp \|_{C^\gamma}$ also decays (as $\lambda \to \infty$), which is possible due to  \eqref{id9} by continuity.\\

We also note that, in both estimates of the background error $\ttb$ and the perturbation error $\ttp$, we need to make use of both the vorticity formulation \eqref{3d_euler_vorticity} of the equations and the velocity formulation \eqref{3d_euler}. The velocity formulation is necessary to control the lowest order terms, and estimates from the two formulations must be coupled to obtain effective control, see \eqref{V_and_ttb} and \eqref{ttp_and_W_est} for details. In particular, terms analogous to \eqref{id_terms} appear in the velocity formulation, and become more challenging. To be precise, all estimates at the level of $\| v[\ttp ] \|_\infty$ must be a factor of $\lt \nt $ smaller than the corresponding terms at the level of $\| \ttp \|_\infty$, which is $\nt$ times smaller than what can be expected by scaling analysis. Indeed, $\lt$ is the concentration parameter, which admits natural scaling, while $\nt$ governs the leading order frequency of $\tp$, and so we must be keep a careful track of it in the velocity formulation, see, for example, Lemma~\ref{lem_votp}.\\

Having discussed the control of the error $\ttp $, it now remains to quantify the growth of $\otp$. Recalling the Cauchy formula \eqref{id7}, we see that we need to prove  precise estimates on $\eta$ and $\eta^{-1}$, which we achieve in Lemma~\ref{lem_eta}. For example, by \eqref{id5a},
\[
\nabla \eta  \simeq  \begin{pmatrix}
1 & 0 & 0\\
0 & N^{t/2} & 0\\
0 & 0 & N^{-t/2}
\end{pmatrix} \quad \text{ on }\supp\, \otp(\cdot, 0) \subset B(0, \lt^{-1}),
\]
see \eqref{new_grad_eta} for details. This shows that the hyperbolic flow generated by $v[\tb ]$ gives two simultaneous mechanisms for growth of $\otp$:
\begin{enumerate}
\item Growth due to the transport term $v[\tb ] \cdot \nabla \otp$ in \eqref{id6a}. This is due to the ``$y$'' appearing in $\tp( y,0)$ in \eqref{id7}. To be precise, we expect $\nabla y = (\nabla \eta)^{-1}$ to be diagonal (to the leading order) with entries $1$, $N^{-t/2}$ and $N^{t/2}$ (which we show in \eqref{nabla_y} below). The last diagonal entry is the largest, as expected from the hyperbolic flow, which causes squeezing in $x_3$ (see~Figure~\ref{fig1} for a sketch). This is why the initial condition \eqref{id6} for $\tp$ has the highest frequency in $x_3$. Thus, whenever a $\p_3$ derivative falls on $\sin (\lt \nt y_3)$ in $\tp(\cdot ,0)$ in \eqref{id7}, we expect a factor of $N^{t/2}$, representing the growth due to the transport term. We emphasize that this growth is only visible through the derivatives of $\otp$.
\item Growth due to the vortex stretching term $\otp \cdot \nabla v[\tb ]$. This is due to the stretching factor $\nabla \eta$ appearing in \eqref{id7}. The largest entry of $\nabla \eta$ is the second diagonal entry $N^{t/2}$, which means that the vector $\otp$ will be stretched in the $e_2$ direction (see also Figure~\ref{fig1}). This is why we make $\tp(\cdot ,0)$ point only  in the $e_2$ direction (i.e. to the leading order, as mentioned below \eqref{id6}). We emphasize that this growth occurs at the level of the magnitude, i.e. is visible without taking any derivatives of $\otp$.
\end{enumerate}
\begin{center}
 \includegraphics[width=5cm]{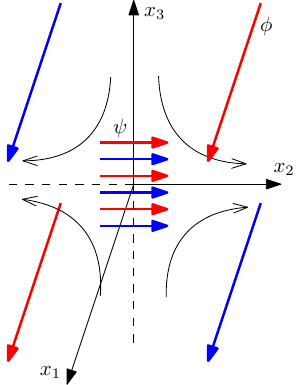}
 \end{center}
 \nopagebreak
 \captionsetup{width=.8\linewidth}
  \captionof{figure}{A sketch of the background vorticity $\tb$ and the perturbation vorticity $\tp$. Here the colors and directions of the colored arrows indicate the amplitude and direction of the vorticity vector, with the red color corresponding to positive amplitude, and the blue color corresponding to negative amplitude. The black arrows indicate the particle trajectories of the hyperbolic flow in the $x_2-x_3$ plane generated by $v[\tb ]$.  }\label{fig1} 

We note that the two mechanism of growth, together with the structure of the background vorticity $\tb$ (which points in the direction of $e_1$) utilize all 3 spatial dimension available, see Figure~\ref{fig1} for a sketch.  Moreover, from the above discussion of the Cauchy formula \eqref{id7}, we expect that
\[
\| \otp \|_{H^\beta } \sim N^{t(1+ \beta )/2} (\lt\nt)^{\beta -s},
\]
for all $\beta \in [0,2]$, which we prove in Lemma~\ref{lem_otp}, and that the ``$1$'' inside the bracket ``$(1+\beta)$''  corresponds to the growth due to vortex stretching, while the ``$\beta$'' corresponds to the growth due to the transport term. Combining the resulting estimates of the background vorticity $\tb$ and the perturbation vorticity $\tp$, one directly obtains the norm inflation claim (Theorem~\ref{thm_black_box}).\\

In order to obtain instantaneous continuous loss  of Sobolev regularity (Theorem~\ref{thm_main}), we consider a rapidly increasing sequence $\{ D_j \}_{j\geq 1 } \subset [0,\infty )$, and we consider initial data of the form
\[
\omega_0 \coloneqq \sum_{j\geq 1} \omega_{K_j,\lambda_j } (\cdot - D_je_1,0),
\]
where $K_j\sim \varepsilon 2^{-j}$ (so that $\| \omega_0 \|_{H^s} \leq \varepsilon$), $\{ \lambda_j \}_{j\geq 1}$ is a rapidly increasing sequence with $\lambda_j\gg K_j$, and  $\omega_{K_j,\lambda_j} $ is given by Theorem~\ref{thm_black_box}. We give a gluing argument which shows that there exists $T>0$ and a classical solution $\omega(t)$ (in the sense of Definition~\ref{defn_sol}) to the Euler equation \eqref{3d_euler_vorticity} on $[0,T]$ with initial condition $\omega_0$.  We can then observe the loss of regularity by noting that, for each $\beta \in [0,2]$,
\[
\| \omega \|_{H^\beta } = \sum_{j\geq 1} \| \omega_{K_j,\lambda_j } \|_{H^\beta } + O(1) = \sum_{j\geq 1} (K_j \log \lambda_j)^{-t(1+\beta )/s} \lambda_j^{c_3(\beta-s) + t (1+\beta )c_4} + O(1),
\]
see \eqref{ooo} for details. It is thus clear that $\| \omega (t) \|_{H^\beta } < \infty $ if and only if the power of $\lambda_j$ is nonpositive, that is if $\beta \leq (s-\overline{c}t)/(1+\overline{c}t)$, where $\overline{c}\coloneqq c_4/c_3$. This in particular explains the Sobolev exponent in the claim of Theorem~\ref{thm_main}. We emphasize that the decay in time in the numerator of $(s-\overline{c}t)/(1+\overline{c}t)$ corresponds to the growth due to vortex stretching, while the decay due to the denominator corresponds to the growth due to transport, as these two numbers arise directly from the ``$1$'' and ``$\beta $'' in the exponent ``$t(1+\beta )c_4$'' above. In particular, the two effects are reflected in the loss of Sobolev regularity with the same strength, at least at $t=0$.\\

In order to prove the uniqueness part of Theorem~\ref{thm_main}, supposing that there exists another solution $\wo$, we first show that $\wo$ remains localized for $t>0$ in the sense that $\wo= \sum_{j\geq 1} \wo_j$ and each $\wo_j$ is supported in $\weta (B(\lambda_j^{-1}),t)$, where $\weta$ denotes the particle trajectory of $v[\wo ]$. We then set $W\coloneqq \omega -\wo$ we let $T_0 \in [0,T)$ be the first time when $W$ becomes nonzero. We then show that, for a short time after $T_0$, say on $[T_0,T_0+\epsilon]$, $W$ can also be localized. We then perform an error estimate on $[T_0,T_0+\epsilon ]$ in $H^2 (\R^3)$. This is challenging, since none of $\omega$, $\wo$ belong to $H^2$ and is, in fact, only possible if $D_j$ increases sufficiently fast. To be precise, we first show that $\| W\|_{H^2} \leq 1$ on $[T_0,T_0+\epsilon]$ if $D_j$ diverges sufficiently  fast, and then deduce that in fact $W=0$ on $[T_0,T_0+\epsilon]$ if $D_j$ diverges even faster, concluding the proof of uniqueness.

\subsection{Organization of the paper} The rest of the paper is organized as follows. In Section \ref{sec_prelims}, we collect notations and several useful formulas and inequalities used in later sections. Then, Theorems \ref{thm_black_box} and \ref{thm_main} are proved in Sections \ref{sec_pf_thm1} and \ref{sec_gluing}, respectively.

\section{Preliminaries}\label{sec_prelims}

\subsection{Notation}

We use the  summation convention over repeated indices. The notation ``$\supp$'' denotes the support of a function in space only. 
We use standard notation for Lebesgue spaces $L^p$ and Sobolev spaces $H^\beta$. We use the notation $\| \cdot \|_p \coloneqq \| \cdot \|_{L^p( \R^3 )}$.

Throughout the paper $c$ denotes an  absolute positive constant whose value differs from line to line.

We will sometimes use the Levi-Civita notation for the vector product
\eqnb\label{lc}
(a\times b)_i = \epsilon_{ijk} a_j b_k,
\eqne
where $i\in \{ 1,2,3\}$, $\epsilon_{ijk}$ are the Levi-Civita coefficients, and we applied the summation convention.

\subsection{Cauchy formula}
We recall the Cauchy formula,
if $v\in C([0,T]; C^1 (\R^3 ))$ is a divergence-free vector field, and $\eta (y,t)$ denotes the particle trajectories of $v$, i.e. $\p_t \eta (y,t) = v(\eta (y,t))$ with $\eta (y,0)=y$, then
\eqnb\label{cauchy_formula}
\phi (x,t) \coloneqq \nabla \eta (\eta^{-1} (x,t),t) \phi (\eta^{-1} (x,t),0)\qquad \text{is a solution to } \p_t \phi + v\cdot \nabla \phi = \phi \cdot \nabla v.
\eqne

\subsection{Gronwall estimate} We will often make use of the following Gronwall type estimate:
\eqnb\label{ode_fact1}
\text{if }f'(t) \leq c f(t) + b  \qquad \text{ then }\quad f(t) \leq f(0)\ee^{ct} + \frac{b}{c} \left( \ee^{ct}-1\right).
\eqne
In particular,
\eqnb\label{ode_fact}
\text{if } f(0)=0, \qquad \text{ then }\quad f(t)\leq bt \ee^{ct}.
\eqne
We will frequently apply this estimate in the case when $f(0)=0$ and $c=C\log N$ where $N$ is a large parameter. In that case, we obtain $f(t) \le bt N^{Ct}.$

\subsection{Fractional Sobolev space}
We recall the Sobolev-Slobodeckij characterization
\eqnb\label{ss}
\| f \|_{\dot H^s }^2 = C_s \int_{\R^3} \int_{\R^3} \frac{|f(x)-f(y)|^2}{|x-y|^{3+2s}}   \d x \,\d y \qquad \text{ for }s\in (0,1),
\eqne
see \cite[Proposition~3.4]{hitchhiker} for a proof. We note that, even if two functions $f$, $g$ have disjoint supports then $\| f+ g\|_{\dot H^s} \ne \| f\|_{\dot H^s}  +\|  g\|_{\dot H^s} $ Instead we have the following 
\eqnb\label{slobo_equiv}
\left\| \sum_{k\geq 1} f_k  \right\|^2_{\dot H^s} \sim \sum_{k\geq 1} \left\|  f_k  \right\|^2_{\dot H^s} + O\left( \sum_{k\geq 1} \| f_k \|_2^2 \right)
\eqne
for all $s\in [0,2]$, and any sequence of functions $\{ f_k\}_{k\geq 1}$ such that $\supp\, f_k \subset B(z_k,1)$, where $\{ z_k\}_{k\geq 1}$ is such that  $|z_1|\geq 2$, and $|z_k |\geq 2 |z_{k-1}|$ for all $k\geq 2$. We emphasize that the implicit constants in both $\gec $ and $\lec$ in \eqref{slobo_equiv} are independent of $s$.
\begin{proof}[Proof of \eqref{slobo_equiv}.]
We first consider $s\in (0,1)$. Let $B_k\coloneqq B(z_k,2)$, and 
\[
M \coloneqq \sum_{k\geq 1} \int_{B_k } \int_{B_k} \frac{|f_k(x)-f_k(y)|^2}{|x-y|^{3+2s } }\d x \, \d y ,
\]
which is the leading order term on each side of \eqref{slobo_equiv}. In order to compare $M$ with the first term on the right-hand side of \eqref{slobo_equiv} we note that we need to bound the integrand on the difference of the supports, that is on $\R^3 \times \R^3 \setminus \left(  B_k \times B_k \right)  = \left(  B_k^c \times \R^3 \right)  \cup \left( B_k  \times B_k^c \right) $. That is we have 
\[
\begin{split}
&\left| \sum_{k\geq 1} \left\|  f_k \right\|_{\dot H^s}^2 - M\right| =\left| \sum_{k\geq 1} \int_{\R^2} \int_{\R^2} \frac{|f_k(x)-f_k(y)|^2}{|x-y|^{3+2s } }\d x \, \d y - M \right| \\
&\hspace{1cm}\lec   \sum_{k\geq 1} \left( \int_{ B_k^c } \int_{\R^3} \frac{|f_k(x)|^2}{|x-y|^{3+2s } }\d x \, \d y+   \int_{B_k} \int_{B_k^c } \frac{|f_k(y)|^2}{|x-y|^{3+2s } }\d x \, \d y\right) \\
&\hspace{1cm}\lec   \sum_{k\geq 1} \| f_k \|_2^2 \int_{ B_k^c }  \frac{\d y }{|y-z_k|^{3+2s } } \lec   \sum_{k\geq 1} \| f_k \|_2^2,
\end{split}
\]
which gives the error term in \eqref{slobo_equiv}.

In order to estimate  $\left\| \sum_{j\geq 1} f_j \right\|_{\dot H^s}^2 - M$ we note that here the difference of the supports is  
\[
\left( \R^3 \times \R^3 \right) \setminus \left( \bigcup_l B_l \times B_l  \right) = \left( \left( \bigcup_l B_l \right)^c \times \R^3 \right)  \cup \bigcup_{l\geq 1} \left( B_l \times B_l^c \right) .
\]
 As for the first set on the right-hand side we have
\eqnb\label{te1}
\begin{split}
&\int_{\left( \bigcup_l B_l\right)^c } \int_{\R^3 } \frac{|\sum_{k\geq 1}f_k (x) -\sum_{m\geq 1 } f_m(y)|^2}{|x-y|^{3+2s } }\d x \, \d y = \sum_{k\geq 1} \int_{\left( \bigcup_l B_l\right)^c } \int_{B_k  } \frac{|f_k (x)|^2}{|x-y|^{3+2s } }\d x \, \d y \\
&\lec \sum_{k\geq 1} \| f_k \|_2^2  \int_{B_k^c  } \frac{\d y }{|y-z_k|^{3+2s } } \lec \sum_{k\geq 1} \| f_k \|_2^2 
\end{split}
\eqne
and, for the second set, we have
\[
\begin{split}
&\int_{B_l} \int_{B_l^c}\frac{|\sum_{k\geq 1}f_k (x) -\sum_{m\geq 1} f_m (y)|^2}{|x-y|^{3+2s } }\d x \, \d y = \int_{B_l} \int_{B_l^c}\frac{|\sum_{k\ne l,k\geq 1}f_k (x) - f_l (y)|^2}{|x-y|^{3+2s } }\d x \, \d y \\
&\lec \sum_{\substack{k \geq 1 \\ k\ne l}} \| f_k \|_2^2 \frac{|B_l|}{|z_k-z_l|^{3+2s}} + \| f_l \|_2^2 \lec (1+|z_l|)^{-3} \sum_{k\geq 1} \| f_k \|_2^2 + \| f_l \|_2^2,
\end{split}
\]
so that summation in $l\geq 1$ gives
\[
\sum_{l\geq 1} \int_{B_l} \int_{B_l^c}\frac{|\sum_{k\geq 1}f_k (x) -\sum_{m\geq 1} f_m (y)|^2}{|x-y|^{3+2s } }\d x \, \d y \lec \sum_{l\geq 1} \| f_l \|_2^2.
\]
This and \eqref{te1} give 
\[
\left| \left\| \sum_{k\geq 1} f_k \right\|_{\dot H^s}^2 - M \right| \lec  \sum_{k\geq 1 }  \| f_k \|_{2}^2,
\]
as required.\\

The claim is trivial for $s\in \{ 0,1,2\}$, while for $s\in (1,2)$ we apply the above analysis to $\nabla f_k$'s to obtain
\eqnb\label{temp001}
\left\| \sum_{k\geq 1} \nabla f_k  \right\|^2_{\dot H^{s-1}} \sim \sum_{k\geq 1} \left\|  \nabla f_k  \right\|^2_{\dot H^{s-1}} + O\left( \sum_{k\geq 1} \| \nabla f_k \|_2^2 \right)
\eqne
for all $s\in (1,2)$, and it remains to replace the last term by $O\left( \sum_{k\geq 1} \| f_k \|_2 \right) $. To this end, we  use Sobolev interpolation to obtain
\[
\| \nabla f_k \|_2^2 \leq \| f_k \|_2^{2\frac{s-1}s} \| f_k \|_{\dot H^s}^{\frac2s} \leq \delta \| f_k \|^2_{\dot H^s} + C_\delta \| f_k \|^2_2
\]
for each $k$ and any $\delta$, where $C_\delta>0$ does not  depend on $s$. Summing the above inequality in $k\geq 1$ we can thus absorb the $H^s$ term by the first term on the right-hand side of \eqref{temp001} to obtain the claim. 
\end{proof}

\subsection{Embeddings and nonlocal estimates}
We also recall the embeddings
\eqnb\label{Linfty_into_H^2}
\| f \|_{L^\infty} \leq L \| f \|_{H^2},
\eqne
\eqnb\label{vLinfty_into_omegaL^4}
\| v[f ] \|_{L^\infty} \leq C (\| f \|_{L^1} + \| f \|_{L^4}) \leq L \| f \|_{L^4} 
\eqne
if $| \supp\, f |\leq 1$, and
\eqnb\label{vLinfty_into_omegaL^p}
\| v[f ] \|_{L^\infty} \leq C_p (\| f \|_{L^1} + \| f \|_{L^p})  
\eqne
for any $p\in (3,\infty )$, where $L,C_p\geq 1$ are constants.

If $\omega $ is supported in $B(0,1)$ then
\eqnb\label{vel_est_general}
\| v[\omega ] \|_{C^1} \leq C_\alpha \| \omega \|_\infty  \log(2+ \| \omega \|_{C^\alpha})
\eqne
for every $\alpha \in (0,1)$. Similarly we have
\eqnb\label{Lapl_inv}
\| \p_i \p_j (-\Delta )^{-1} f \|_\infty \leq C_\alpha \| f \|_{\infty }  \log (2+ \| f \|_{C^\alpha})
\eqne
for every $\alpha \in (0,1)$, $i,j\in \{ 1,2,3 \}$.

\section{Proof of Theorem~\ref{thm_black_box} }\label{sec_pf_thm1}
Here we prove the norm inflation result, Theorem~\ref{thm_black_box}.

\subsection{The background vorticity $\tb $}

We fix $g\colon \R^3 \to [0,1]$ be such that 
$g$ is smooth,  $\supp\, g \subset B(0,1/2)$, $g=1$ on $B(0,1/4)$, \eqnb\label{temp000}
\int_{-1/2}^{1/2} \partial_{2}g(y_{1}, x_{2},x_{3}) \d y_{1}  = 0
\eqne for all $x_1, x_{2}, x_{3}$, and that $g$ is even with respect to each coordinate $x_1,x_2,x_3$. We let  
\eqnb\label{otb}
\otb (x) \coloneqq K^{-1} \begin{pmatrix}
\lambda^{\frac32-s }N^{-s} g(\lambda x) \sin (\lambda N x_2) \sin (\lambda N x_3) \\
0\\
-\lambda^{\frac52 -s} N^{-s} \sin (\lambda N x_2) \int_{1/\lambda }^{x_3} \p_1 g(\lambda x_1, \lambda x_2, \lambda y_3 ) \sin (\lambda N y_3 ) \d y_3
\end{pmatrix}.
\eqne
We let 
\eqnb\label{lambda_vs_N}
\lambda^{\frac32 -s}N^{-s} = K \log N.
\eqne
We note that $\div \, \otb =0$,  $\supp\, \otb \subset B (0,L/\lambda )$ and
\eqnb\label{otb_Ck}
\| \otb \|_{C^k } \lec_k  (\lambda N )^k \log N .
\eqne
Indeed, this is clear for $\otb_1$, and, for $\otb_3$ note that integration by parts gives
\eqnb\label{otb3_est}
\begin{split}
| \otb_3 (x) | &\lec  \lambda \log N  \left| \frac1N \int_{-1/\lambda}^{x_3} \p_1\p_3 g (\lambda x_1, \lambda x_2, \lambda y_3) \cos (\lambda N y_3 ) \d y_3 - \frac{\p_1 g (\lambda x )}{\lambda N } \cos (\lambda N x_3)   \right| \\
&\lec N^{-1} \log N ,
\end{split}
\eqne
which is, roughly speaking, $N$ times smaller than $\| \otb_1 \|_\infty$. Here we also omitted the $K$ dependence, since $K\geq 1$ and big $K$ only makes the bounds better. Similarly we obtain that 
\eqnb\label{otb3_Ck}
\| \otb_3 \|_{C^k}\lec_k   \lambda^k N^{k-1} \log N 
\eqne
for every $k\geq 0$.  Furthermore,
\eqnb\label{otb_Hb}
\| \otb \|_{H^\beta  } \sim   (\lambda N )^{\beta -s }
\eqne
for all $\beta \in [0,2]$. Indeed, a direct computation shows that $\| \nabla^k \otb \|_2 \sim_k (\lambda N)^k$ for $k=0,1,2$, and Sobolev interpolation gives \eqref{otb_Hb}.

We set 
\eqnb\label{ov_otb_def}
\ov [\otb ] \coloneqq \frac{\log N}{2 \lambda N } g(\lambda x) \begin{pmatrix}
0 \\
\sin (\lambda N x_2 ) \cos (\lambda N x_3 )\\
-\cos (\lambda N x_2 ) \sin (\lambda N x_3)
\end{pmatrix} 
\eqne
Note that 
\eqnb\label{log_defor}
\left\|   \ov [\otb ] \right\|_{C^{k,\alpha} }\lec_{k,\alpha} (\lambda N)^{k-1+\alpha }\log N 
\eqne
and
\eqnb\label{ov_otb_div}
\left\|   \div \, \ov [\otb ] \right\|_{C^k } \lec_k  \lambda^k N^{k-1} \log N  
\eqne
for $k\geq 0$, since $\div \, \ov [\otb ]$ involves differentiation of $g(\lambda \cdot )$ only.

\begin{lem}[Velocity error estimate]\label{lem_vel_err}
For every $\varepsilon\in (0,1)$, $k\geq 0$ 
\eqnb\label{vel_error}
\| v[\otb ] - \ov [\otb ] \|_{C^k} \lec_{\varepsilon ,k} \lambda^{k-1}N^{k+\varepsilon-2} \log N  
\eqne
for all sufficiently large $\lambda >1$.
\end{lem}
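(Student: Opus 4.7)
Define $W := v[\otb] - \ov[\otb]$ and derive a Poisson equation for it. Since $\div\, \otb = 0$ (built into the definition of $\otb_3$), the Biot--Savart law gives $-\Delta v[\otb] = \curl \otb$, hence
\[
-\Delta W = \curl \otb + \Delta \ov[\otb] =: F.
\]
Both $v[\otb]$ and $\ov[\otb]$ decay at infinity, so $W$ is the Newton potential $(-\Delta)^{-1}F$. The plan has two parts: (i) exhibit a leading-order cancellation, built into the choice of $\ov[\otb]$ in (\ref{ov_otb_def}), that makes $F$ much smaller than one would naively expect; (ii) exploit the high-frequency oscillatory structure of what remains to obtain an additional gain of $(\lambda N)^{-2}$ upon inversion.

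\textbf{Cancellation in $F$.} For (i), I compute $F$ componentwise. The dominant contributions to $\curl \otb$ come from the $\lambda N$ factor produced when a derivative hits a $\sin/\cos$, and likewise for $\Delta \ov[\otb]$: the $(\lambda N)^2$ from $\Delta$ hitting the oscillations combines with the $(\lambda N)^{-1}$ prefactor in $\ov[\otb]$ to produce an amplitude of order $\lambda N\log N$. By the exact choice of coefficients in (\ref{ov_otb_def}), these top-order contributions cancel identically, leaving $\|F\|_\infty \lec \lambda \log N$ (a full factor of $N$ smaller than the naive bound). The $\partial_1 \otb_3$ contribution to $(\curl\otb)_2$ is separately subdominant by integration by parts in the $y_3$-integral defining $\otb_3$, exactly as in (\ref{otb3_est}). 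The surviving $F$ has the structure
\[
F(x) = \lambda \log N \cdot \sum_j \Phi_j(\lambda x)\, T_j(\lambda N x_2, \lambda N x_3) + (\text{terms of order }\lambda \log N/N),
\]
where each $\Phi_j$ is a first derivative of $g$ and each $T_j$ is a trigonometric monomial in $(\lambda N x_2, \lambda N x_3)$; in particular $\supp F \subset B(0, 1/(2\lambda))$ and $\|F\|_{C^k} \lec \lambda^{k+1} N^k \log N$.

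\textbf{Inverting $-\Delta$ with oscillatory gain.} For (ii), each $T_j$ is an eigenfunction of the 2D Laplacian in $(x_2, x_3)$ with eigenvalue $-2(\lambda N)^2$. Setting $W^{(0)}_j := \frac{\lambda \log N}{2(\lambda N)^2}\Phi_j(\lambda x)\, T_j$ as an ansatz, a direct computation gives $-\Delta W^{(0)}_j = \lambda \log N \cdot \Phi_j T_j + R^{(1)}_j$, where $R^{(1)}_j$ is again oscillatory at frequency $\lambda N$ with slowly-varying amplitude of size $\lec \lambda\log N/N$ (a derivative of $\Phi_j$ divided by $\lambda N$). Iterating $m$ times and bounding the final Newton potential crudely by $\|(-\Delta)^{-1}R\|_\infty \lec \|R\|_\infty \lambda^{-2}$ (using compact support in $B(0,1/\lambda)$) yields $\|W\|_{C^0}\lec \lambda^{-1}N^{-2}\log N$ and, by direct differentiation of the explicit ansatz, $\|W\|_{C^1}\lec N^{-1}\log N$. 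For $k \geq 2$ the cleaner route is to apply the Schauder-type bound (\ref{Lapl_inv}) to $\nabla^{k-2}F$:
\[
\|\nabla^k W\|_\infty \lec \|\nabla^{k-2}F\|_\infty \log\!\bigl(2+\|\nabla^{k-2}F\|_{C^\alpha}\bigr) \lec \lambda^{k-1}N^{k-2}\log N\cdot \log(\lambda N),
\]
and the extra $\log(\lambda N)$ is absorbed into $N^\varepsilon$ for any $\varepsilon > 0$ once $N$ is sufficiently large (since $\lambda$ is polynomial in $N$ via (\ref{lambda_vs_N})).

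\textbf{Main obstacle.} The principal difficulty is the bookkeeping behind the top-order cancellation: one must expand the nine scalar entries of $\curl \otb + \Delta\ov[\otb]$ via the product rule, classify each term by whether derivatives fall on $g(\lambda\cdot)$ or on the oscillatory $\sin/\cos$ factors, and check that the $\lambda N\log N$-amplitude pieces coming from $\partial_3\otb_1, \partial_2\otb_1$, etc., match precisely with those from $\Delta\ov[\otb]$ up to sign. A related technicality is the careful estimate of $\otb_3$ and its derivatives via integration by parts in its defining $y_3$-integral, needed to ensure that the $\otb_3$ contribution to $\curl \otb$ remains subdominant rather than spoiling the cancellation.
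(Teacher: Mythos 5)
Your proposal is correct but proceeds by a genuinely different route from the paper's. You derive a Poisson equation $-\Delta W = F$ with $F = \curl\otb + \Delta\ov[\otb]$, identify a top-order cancellation in $F$ (the $\lambda N\log N$-amplitude pieces, where derivatives fall on the trig factors, cancel exactly because $\ov[\otb]$ was chosen to be a curl-inverse of $\otb$ at the carrier frequency), and then invert $-\Delta$ by an oscillatory ansatz that exploits the fact that the residual $F$ is an eigenfunction of $\Delta_{x_2,x_3}$ with eigenvalue $-2(\lambda N)^2$ to leading order, iterating to push the remainder down. The paper instead works directly on the Biot--Savart integral: it writes $v[\otb_1 e_1]-\ov[\otb]$ as an integral with the "frozen-coefficient" kernel $g(\lambda y)-g(\lambda x)$, splits into an inner region of radius $a\sim N^{-1+\varepsilon/2}$ (where Lipschitz continuity of $g$ gives the gain) and an outer region (where $m\sim 2/\varepsilon$ integrations by parts in $y_2$ harvest the oscillation), then treats $v[\otb_3 e_3]$ separately. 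The two mechanisms -- your spectral inversion of $-\Delta$ on a fixed-frequency trig block versus the paper's stationary-phase-style integration by parts -- are morally the same source of gain, but the executions are distinct. Your approach has the advantage of making the role of the relation $\lambda^{3/2-s}N^{-s}=K\log N$ and the precise amplitude in \eqref{ov_otb_def} conceptually transparent (they are exactly what make $F$ cancel at top order), and it yields the slightly sharper bounds $\lambda^{-1}N^{-2}\log N$ and $N^{-1}\log N$ for $k=0,1$ without the $N^\varepsilon$ loss; the paper's integral-kernel argument is more self-contained and avoids the bookkeeping of the iterated ansatz. One point you should make explicit to be fully rigorous: the $\otb_3$ contribution to $F_1=\partial_2\otb_3$ is not literally a slowly-varying profile times a trig monomial until after one integration by parts in the $y_3$-integral defining $\otb_3$ (the boundary term at $y_3=-1/\lambda$ vanishes since $g$ is supported in $B(0,1/2)$), and only then does it enter your sum $\sum_j\Phi_j T_j$; you gesture at this with "exactly as in (\ref{otb3_est})" but the proof needs it before, not after, the cancellation bookkeeping.
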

\begin{proof}
We set 
\[
a\coloneqq N^{-1+\frac{\varepsilon}{2}},
\]
and we let $\chi \in C_c^\infty (\R^3;[0,1])$ be such that $\chi=1$ on $B (a)$ and $\chi=0$ on $B(2a)^c$. 

For  $\otb_1 e_1$ we have that
\[
\begin{split}
u(x) &\coloneqq (v[\otb_1 e_1 ] - \ov [\otb ])(x) = \frac{-\lambda^{\frac32-s} N^{-s}}{4\pi } \int \frac{(x-y) \times ((g(\lambda y ) - g(\lambda x)) \sin (\lambda N y_2 ) \sin (\lambda N y_3 ) e_1 )}{|x-y|^3} \d y \\
&= \frac{-\lambda^{\frac12-s} N^{-s}}{4\pi } \int \frac{(\lambda x-y) \times ((g(y ) - g(\lambda x)) \sin ( N y_2 ) \sin ( N y_3 ) e_1 )}{|\lambda x-y|^3} \d y \\
&= \frac{-\lambda^{\frac12-s} N^{-s}}{4\pi } \int \frac{(\lambda x-y) \times ((g(y ) - g(\lambda x)) \chi( \lambda x - y ) \sin ( N y_2 ) \sin ( N y_3 ) e_1 )}{|\lambda x-y|^3} \d y \\
&\hspace{2cm}+ \frac{-\lambda^{\frac12-s} N^{-s}}{4\pi } \int \frac{(\lambda x-y) \times ((g(y ) - g(\lambda x)) (1-\chi (\lambda x - y) )\sin ( N y_2 ) \sin ( N y_3 ) e_1 )}{|\lambda x-y|^3} \d y \\
&=: u_{\rm in }(x) +  u_{\rm out} (x)
\end{split}
\]
As for the inner part,
\[
\begin{split}
| u_{\rm in } (x) | &\leq \frac{\lambda^{\frac12-s} N^{-s}\| g \|_{C^1} }{4\pi } \int_{\{ |y - \lambda x | \leq 2a \} } |y-\lambda x |^{-1}  \d y \leq \frac12 \lambda^{\frac12-s} N^{-s}\| g \|_{C^1} a^2.
\end{split}
\]
for every $x\in \R^3$. For the outer part we integrate by parts $m\geq 2$ times in $y_2$ to obtain
\[
\begin{split}
 u_{\rm out } (x)  &= \frac{-\lambda^{\frac12-s} N^{-m-s} }{4\pi } \sum_{|\alpha | + |\beta | + |\gamma |=m} C_{\alpha,\beta ,\gamma } \int K_{\alpha, \beta , \gamma} (x,y) \sin (N y_2 ) \sin (Ny_3) \d y ,
\end{split}
\]
where 
\[
K_{\alpha , \beta , \gamma } (x,y) \coloneqq D^\alpha \left( \frac{(\lambda x - y ) \times e_1 }{|\lambda x -y |^3 } \right) D^\beta (g(y) - g(\lambda x) ) D^\gamma (1-\chi (\lambda x - y )).
\]
Since
\[
\begin{split}
\left| \int K_{\alpha , \beta , \gamma } (x,y) \d y \right| &\leq \begin{cases}
\| g \|_\infty \int_{\{|y-\lambda x | \geq a\} } |\lambda x - y |^{-2-|\alpha |} \hspace{1cm}&|\gamma |=|\beta |=0,\\
\| g \|_{C^{|\beta |}} \int_{\{|y-\lambda x | \geq a\} \cap \supp\, g} |\lambda x - y |^{-2-|\alpha |} \hspace{1cm}&|\gamma |=0,|\beta |\geq 1,\\
\| \chi \|_{C^{|\gamma |}} \| g \|_{C^{|\beta |}} \int_{\{|y-\lambda x | \sim a\} } |\lambda x - y |^{-2-|\alpha |} \hspace{1cm}&|\gamma |\geq 1,
\end{cases}\\
&\lec_m a^{-|\alpha |  - |\gamma | +1 }\log a^{-1} \lec  a^{-m+1} \log a^{-1}
\end{split}
\]
for each $x\in \R^3$. Thus, using \eqref{lambda_vs_N}
\[
\| u \|_\infty \lec_m \lambda^{\frac12-s } ( N^{-s} a^2 + N^{-m-s} a^{-m+1} )\leq \lambda^{-1} \left( N^{-2+\varepsilon }+ N^{-\frac{m\varepsilon }2 -1+\frac{\varepsilon}2 } \right) \log N\leq \lambda^{-1} N^{-2+\varepsilon } \log N,
\]
as required, where we took $m \coloneqq \lceil2/\varepsilon \rceil$ in the last step.  The estimate for $\nabla^k u$ is analogous.

It remains to show that $\| v[\otb_3 e_3] \|_{C^k}\lec_{\varepsilon,k} \lambda^{k-1} N^{k+\varepsilon -2} \log N$. To this end we redefine
$a\coloneqq \lambda^{-1}N^{-2+\varepsilon}$ to obtain

\[
\begin{split}
| v&[\otb_3 e_3 ] (x) | =C \left| \int \frac{(x-y) \times e_3 }{|x-y|^3} \otb_3 (y )  \d y\right| \\
&\lec  \left| \int \frac{(x-y) \times e_3 }{|x-y|^3} \otb_3 (y) \chi(x-y) \d y \right| + \left| \int \frac{(x-y) \times e_3 }{|x-y|^3} \otb_3 (y) (1-\chi(x-y)) \d y \right| \\
&\lec \lambda^{\frac32 -s} N^{-1-s} \int_{B (x,2a)} \frac{ \d y }{| x-y |^{2}} + \lambda^{\frac12 -s} N^{-2-s}  \left( \int_{B(0,1) \setminus B(x,a)} \frac{\d y }{|x-y|^{3}} + \int_{\{ |x-y| \sim a \} } \frac{|\nabla \chi (x-y)|}{|x-y|^2}  \d y\right) \\
&\lec \lambda^{\frac32-s} N^{-s} (a + \lambda^{-1} N^{-2} \log a^{-1} + \lambda^{-1} N^{-2} ) \lec_{\varepsilon } \lambda^{-1} N^{-2+\varepsilon } \log N,
\end{split}
\]
where, in the second inequality, we used \eqref{otb3_Ck} and an estimate analogous to \eqref{otb3_est}, after we integrated by parts in $y_2$. This gives the required bound on $\| v[\otb_3 e_3 ] \|_\infty$, and the bounds on the $C^k$ norms follow in a similar way.
\end{proof}

Having established the error estimates for the approximation of $v[\otb]$ by the pseudovelocity $\ov [\otb ]$, we let 
\eqnb\label{def_G}
G\coloneqq \ov [\otb ] \cdot \nabla \otb - \otb \cdot \nabla \ov [\otb ]. 
\eqne
 We see that there is a cancellation in $\ov [\otb ] \cdot \nabla (\otb_1 e_1)$ when the derivatives fall on the trigonometric functions, so that
\[
\begin{split}
G(x)&= \frac{\lambda^{3-2s} N^{-1-2s}}{2}  g (\lambda x)\sin (\lambda N x_2 ) \sin (\lambda N x_3) \left(  \begin{pmatrix}
0\\ \sin (\lambda N x_2 ) \cos (\lambda N x_3 ) \\-\cos  (\lambda N x_2 ) \sin (\lambda N x_3 ) \\ 
\end{pmatrix} \cdot \nabla g (\lambda x )  e_1 \right.\\
&\hspace{4cm}\left.- \p_1 g(\lambda x ) \begin{pmatrix}
0 \\ \sin (\lambda N x_2 ) \cos (\lambda N x_3) \\
-\cos (\lambda N x_2) \sin (\lambda N x_3 )
\end{pmatrix}
\right) + \ov [\otb ] \cdot \nabla (\otb_3e_3) - \otb_3 \p_3  \ov [\otb ].
\end{split}
\]
The first term on the right-hand side is of size $\lambda^{3-2s} N^{-1-2s} = N^{-1} (\log N)^2$, which illustrates the gain of $N^{-1}$ due to the cancellation. Moreover, every derivative gives a factor of $\lambda N$. As for the last two terms on the right-hand side, $\otb_3$ is  $N$ times smaller than $\otb_1$ (recall \eqref{otb3_Ck} vs. \eqref{otb_Ck}), which gives the same gain. In other words, we have
\eqnb\label{G_error}
\begin{split}
\| G \|_{C^{k,\alpha} } &\lec (\lambda N )^{k+\alpha } N^{-1}(\log N)^2 \\
&\hspace{1cm}+ \left( \| \ov [\otb ] \|_\infty \| \otb_3 \|_{C^{k+1,\alpha }} + \| \ov [\otb ]\|_{C^{k,\alpha }} \| \otb_3 \|_{C^1} + \| \otb_3 \|_\infty \| \ov [\otb ]\|_{C^{k+1,\alpha }}+ \| \otb_3 \|_{C^{k,\alpha }} \| \ov [ \otb ] \|_{C^1}\right) \\
&\lec (\lambda N )^{k+\alpha } N^{-1}(\log N)^2
\end{split}
\eqne
for all $k\geq 0$, $\alpha \in [0,1)$. \\

We denote by $\tb$ the local-in-time strong solution to the 3D Euler equations \eqref{3d_euler_vorticity} with initial condition $\otb$. Note that, due to the assumed symmetry of $g$ we have that, for all $x,t$,
\eqnb\label{tb_symmetry}
v[\tb] (-x,t) = - v[\tb ](x,t ), \quad\text{ and, for each  }i=1,2,3,\quad  \tb_i (x,t)  \quad \text{ is odd with respect to }x_j\text{ for all }j\ne i. 
\eqne
We let  
\[
\ttb \coloneqq \tb - \otb 
\]
denote the background error. Then $\ttb$ satisfies 
\eqnb\label{ttb_eq}
\begin{split}
\p_t \ttb &= \p_t \tb = \tb \cdot \nabla v [\tb ] - v [ \tb ]\cdot \nabla \tb  + \left( - \otb \cdot \nabla \ov [\otb ] + \ov [\otb ] \cdot \nabla \otb - G \right)\\
&= \tb \cdot \nabla \left( v [ \ttb ] + (v[\otb ]-\ov [\otb ] ) \right) + \ttb \cdot \nabla \ov [ \otb ] + \left( \ov [\otb ] - v [\tb ] \right) \cdot \nabla \tb - \ov [\otb ] \cdot \nabla \ttb - G.
\end{split}
\eqne
We also set 
\eqnb\label{V_def}
V\coloneqq v[\tb ] - \ov [\otb ],
\eqne
and we note that $V$ satisfies 
\eqnb\label{V_eq}
\begin{split}
\p_t V &= - v\cdot \nabla v - \nabla p_1 = -v\cdot \nabla v + \ov \cdot \nabla \ov - \nabla p_1 -F \\
&=-V\cdot \nabla V -V\cdot \nabla \ov - \ov \cdot \nabla V  - \nabla p_1 - F,
\end{split}
\eqne
where 
\eqnb\label{what_is_F}
\begin{split}
F\coloneqq \ov \cdot \nabla \ov =& N^{-1} \log N  \,\, \ov \cdot \nabla g (\lambda x ) \begin{pmatrix}
0 \\
\sin (\lambda N x_2 ) \cos (\lambda N x_3 )\\
-\cos (\lambda N x_2 ) \sin (\lambda N x_3)
\end{pmatrix} \\
&\hspace{2cm}+ \underbrace{\lambda^{-1} N^{-1} (\log N)^2 (g(\lambda x ))^2 \begin{pmatrix}
0 \\ \sin (\lambda N x_2 ) \cos (\lambda N x_2) \\ \sin (\lambda N x_3 ) \cos (\lambda N x_3) 
\end{pmatrix}}_{=\nabla p_2 + \frac12 \lambda^{-1} N^{-2} (\log N)^2 g(\lambda x) (\cos (2 \lambda N x_2 ) + \cos (2 \lambda N x_3 )) \nabla g(\lambda x)},
\end{split}
\eqne
and where we used the fact that
\[
\begin{pmatrix}
0 \\ \sin x_2 \cos x_2 \\ \sin x_3 \cos x_3
\end{pmatrix} = -\frac14 \nabla ( \cos (2 x_2) + \cos (2 x_3) ), 
\]
and we set \begin{equation}\label{eq:p2}
	\begin{split}
		p_2 (x) \coloneqq -\frac14 \lambda^{-2} N^{-2} (\log N)^2 (g(\lambda x ))^2 (\cos (2 \lambda N x_2 ) + \cos (2 \lambda N x_3 )).
	\end{split}
\end{equation}
Thus  $\overline{F} \coloneqq F - \nabla p_2$ satisfies
\eqnb\label{est_F}
\| \overline{F}  \|_{C^{k,\alpha} } \lec \lambda^{k+\alpha -1}N^{k+\alpha -2} (\log N)^2 .
\eqne
In particular
\eqnb\label{est_F_Calpha}
\| \overline{F}  \|_{C^1 } \lec 1.
\eqne 
Moreover, note that
\eqnb\label{V_vs_vttb}
v [\ttb ] = v[\tb ] - v[\otb ] = V + \ov [\otb ] - v [\otb] .
\eqne
Note that
\eqnb\label{V_vs_vttb_norms}
\| v[ \ttb ] \|_\infty = \| V \|_\infty + O_\varepsilon (\lambda^{-1}N^{-2+\varepsilon } )
\eqne
Note that, if $\supp\, \ttb \subset B(c \lambda^{-1})$ for some constant $c>0$, then a direct estimate from the Biot-Savart law \eqref{bs} gives that $\| V \|_\infty \lec \lambda^{-1} \| \ttb \|_\infty$. This is insufficient for the control of the error $\ttb$, and instead we establish a stronger bound with an additional $N^{-1}$.
We now fix any  $\alpha \in (0, \frac12 - \frac{s}3) \subset (0,1)$, so that 
\eqnb\label{choice_alpha}
\lambda^\alpha N^{\alpha -\frac12} (\log N)^4 \qquad \text{ decreases as }\lambda \to \infty. 
\eqne

\begin{prop}[Background error]\label{prop_bckgrd_err} For any $\varepsilon \in (0,1)$, there exist absolute constants $c>0$, $T\in (0,1/2]$ such that
\eqnb\label{V_and_ttb_prop}
\begin{split}
\| \ttb \|_\infty &\lec_\varepsilon  N^{-1+\varepsilon +ct } (\log N)^2, \\
\| \ttb \|_{C^\beta} &\lec_{\varepsilon,\beta }   \lambda^{\beta}  N^{\beta-1+\varepsilon +c_\beta t } (\log N)^4 ,\\
\| V \|_\infty &\lec_\varepsilon \lambda^{-1}N^{-2+\varepsilon +ct } (\log N)^2 ,\\
\| \ttb \|_{H^{\beta'} } &\lec_{\varepsilon} N^{-1+\varepsilon + c t}(\lmb N)^{\bt'-s}(\log N)^{2}
\end{split}
\eqne
for all $\beta \in (0,1)$, $\beta' \in [0,2]$, $t\in [0,T]$ and all sufficiently large $\lambda$.
\end{prop}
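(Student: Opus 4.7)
First, I would rewrite \eqref{ttb_eq} using the identity $v[\ttb]+(v[\otb]-\ov[\otb])=V$ from \eqref{V_vs_vttb} in the equivalent form
\[
\p_t\ttb = \tb\cdot\nabla V - V\cdot\nabla\tb + \ttb\cdot\nabla\ov[\otb] - \ov[\otb]\cdot\nabla\ttb - G,
\]
after which the four bounds will follow from a coupled Gronwall argument on $\|\ttb\|_\infty$, $\|V\|_\infty$ and $\|\ttb\|_{C^\bt}$ combined with an interpolation for the Sobolev norm. In $L^\infty$, the transport term $\ov[\otb]\cdot\nabla\ttb$ does not contribute along characteristics; the stretching $\ttb\cdot\nabla\ov[\otb]$ gives $\lec(\log N)\|\ttb\|_\infty$ via \eqref{log_defor}; the source $G$ gives $\lec N^{-1}(\log N)^2$ via \eqref{G_error}; and the cross-terms $\tb\cdot\nabla V$ and $V\cdot\nabla\tb$ are treated using $\|\tb\|_\infty\lec\log N$, $\|\nabla\tb\|_\infty\lec\lmb N\log N$, and the bound on $\|\nabla V\|_\infty$ obtained from the splitting $\nabla V=\nabla v[\ttb]+\nabla(v[\otb]-\ov[\otb])$, Lemma~\ref{lem_vel_err}, and the logarithmic velocity estimate \eqref{vel_est_general} used under an a priori bootstrap $\|\ttb\|_{C^\bt}\lec(\lmb N)^\bt$.

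Second, an $L^\infty$ estimate on \eqref{V_eq} produces a differential inequality for $\|V\|_\infty$. Here the Gronwall coefficient is $\lec\log N$, coming solely from $V\cdot\nabla\ov[\otb]$ and \eqref{log_defor}; the advection $\ov[\otb]\cdot\nabla V$ transports; the pressure $\nabla p_1=-\nabla(-\Delta)^{-1}\p_i\p_j(v_iv_j-\ov_i\ov_j)$ is controlled by the log-Calder\'on--Zygmund estimate \eqref{Lapl_inv}; and the forcing $\overline F$ contributes $\lec\lmb^{-1}N^{-2}(\log N)^2$ via \eqref{est_F}. Coupling this with the vorticity-level inequality through the combination $\|\ttb\|_\infty+\lmb N\,\|V\|_\infty$ absorbs the $V$-terms on the right-hand side of the $\ttb$-inequality and yields a single Gronwall inequality with coefficient $\lec\log N$ and source $\lec N^{-1+\varepsilon}(\log N)^2$; applying \eqref{ode_fact} on $[0,T]$ for a small absolute $T$ then gives the first and third bounds of \eqref{V_and_ttb_prop}.

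Third, the $C^\bt$ bound follows from an analogous weighted Gronwall estimate on $\|\ttb\|_{C^\bt}$ using the inputs $\|\nabla\ov[\otb]\|_{C^\bt}\lec(\lmb N)^\bt\log N$ from \eqref{log_defor}, $\|G\|_{C^\bt}\lec(\lmb N)^\bt N^{-1}(\log N)^2$ from \eqref{G_error}, and a Schauder version of the singular-integral estimate; the choice \eqref{choice_alpha} of $\alp$ keeps the associated bootstrap sub-dominant. Finally, the $H^{\bt'}$ bound follows by interpolation: the $L^2$ bound comes from
\[
\|\ttb\|_2\lec\lmb^{-3/2}\|\ttb\|_\infty\lec(\lmb N)^{-s}\|\ttb\|_\infty
\]
using $\supp\,\ttb\subset B(0,c\lmb^{-1})$ and \eqref{lambda_vs_N}; the $H^2$ bound follows from a standard energy estimate on \eqref{ttb_eq} exploiting the divergence-free transport structure and \eqref{otb_Ck}; and Sobolev interpolation handles the intermediate $\bt'\in[0,2]$.

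The main obstacle will be the singular-integral logarithmic loss in $\|\nabla v[\ttb]\|_\infty$, which naively inflates the Gronwall coefficient to $(\log N)^2$ and produces the unusable growth $e^{C(\log N)^2 t}$. This is overcome by passing to the velocity formulation \eqref{V_eq}, whose principal stretching coefficient $\|\nabla\ov[\otb]\|_\infty\lec\log N$ is free of non-local loss, and by carefully coupling the velocity and vorticity inequalities with the weight $\lmb N$ so that the coefficient of the combined Gronwall stays of order $\log N$ and the exponential growth remains of the form $N^{ct}$ for an absolute constant $c$.
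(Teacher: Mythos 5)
Your overall architecture is the same as the paper's: a coupled $L^\infty$ Gronwall estimate on $\ttb$ and $V$ with the weight $\lambda N$, closed via a bootstrap on $\|\ttb\|_{C^\alpha}$ and $\|V\|_\infty$ (the paper's continuity assumption is $\|\ttb\|_{C^\alpha},\|V\|_\infty\leq 1$, which serves the same purpose as your ``a priori bootstrap''), followed by a Schauder estimate for the $C^\beta$ norm and Sobolev interpolation for the $H^{\beta'}$ scale. Your key observation — that the singular-integral $(\log N)^2$ loss would be fatal in a pure vorticity argument but is repaired by coupling with the velocity formulation — is precisely the paper's strategy.

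One genuine difference, in your favor: for the $L^2$ endpoint you use $\supp\,\ttb\subset B(0,c\lambda^{-1})$ to write $\|\ttb\|_2\lec\lambda^{-3/2}\|\ttb\|_\infty$, then invoke \eqref{lambda_vs_N} to convert $\lambda^{-3/2}\sim(\lambda N)^{-s}(K\log N)^{-1}$. This is correct and in fact simpler than the paper, which performs a direct $L^2$ energy estimate against $\ttb$ on \eqref{ttb_eq} even at the $\beta'=0$ endpoint.

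There is, however, a real gap at the $H^2$ endpoint. You claim it ``follows from a standard energy estimate on \eqref{ttb_eq} exploiting the divergence-free transport structure and \eqref{otb_Ck}.'' It does not follow by a standard argument, because after differentiating twice you meet the term $D^2 v[\ttb]\cdot\nabla\ttb$, and the naive bound $\|D^2 v[\ttb]\|_\infty\|\nabla\ttb\|_2$ is unavailable: one has no control of $\|\ttb\|_{C^1}$ (the paper's Remark~\ref{rmk:C1} explains that the $C^\beta$ bootstrap cannot be pushed to $\beta=1$ because the log-Calder\'on--Zygmund loss would cost an extra $\log N$ in the Gronwall coefficient, producing $N^{c(\log N)t}$ growth). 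The paper's fix is the Gagliardo--Nirenberg bound
\[
\| D^2 v[\ttb]\|_4 \lec \|\nabla\ttb\|_4 \lec \|\ttb\|_\infty^{1/2}\|\nabla^2\ttb\|_2^{1/2},
\]
so that $\|D^2 v[\ttb]\cdot\nabla\ttb\|_2\lec\|\ttb\|_\infty\|\nabla^2\ttb\|_2$, which closes the $H^2$ estimate using only the already available $L^\infty$ control of $\ttb$. Without this (or some equivalent device), your $H^2$ step — and hence the interpolation that produces all $\beta'\in(0,2]$ — does not close.
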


\begin{rem}[A comment about the size of the errors in \eqref{V_and_ttb_prop}]
 
We note that, in light of \eqref{choice_alpha},  Proposition~\ref{prop_bckgrd_err} shows that the dynamics of the exact solution $\tb$ is determined by the dynamics of the pseudosolution $\otb$  \eqref{otb} in the $L^\infty$ and the $C^\alpha$ norm, as well as in $H^{\beta'}$ for $\beta'\in [0,2]$, provided $T_0$ is chosen sufficiently small. Indeed, $\| \otb \|_\infty \sim \log N$, $\| \otb \|_{C^{\alpha}} \sim (\lambda N)^\alpha \log N$ (by  \eqref{lambda_vs_N}, see  \eqref{otb_Ck} for the upper bounds only), and the right-hand sides in the claim \eqref{V_and_ttb_prop} of Proposition~\ref{prop_bckgrd_err} give the same quantities (respectively) multiplied by $N^{\varepsilon +ct}$ as well as a power of $\log N$. One can think of these extra factors as ``penatly factors''. Moreover, each quantity is also mutliplied by $N^{-1}$, which is the main strength of Proposition~\ref{prop_bckgrd_err} guaranteeing that the penalty factors are negligible. The factor $N^{-1}$ arises from the fact that the initial condition \eqref{otb} is a steady solution to the 3D Euler equations \eqref{3d_euler_vorticity} in the leading order frequency $\lambda N$, that is if one replaced $g$ by $1$. To be more precise, we gain $N^{-1}$ in our quantitative Riemann-Lebesgue lemma (Lemma~\ref{lem_vel_err}), as well as in the estimate \eqref{G_error} on $G$, where we observed cancellations to the leading order. We also gained a factor of $N^{-1}$ in $\otb_3$ (recall \eqref{otb3_est}).
\end{rem}
\begin{rem}[A comment about the use of the estimates \eqref{V_and_ttb_prop}]
We also note that the most important estimate in \eqref{V_and_ttb_prop} is the bound on $\| \ttb \|_\infty$, whose role is to provide a subcritical control of the error. The estimate is strong enough to also obtain a sufficiently strong estimate for $\| \ttb \|_{C^\alpha}$ for $\alpha $ small as in \eqref{choice_alpha}. We also use the $C^\alpha$ control to make the proof of Proposition~\ref{prop_bckgrd_err} simpler. 
As mentioned in the introduction (below~\eqref{id9}), in order to obtain control of $\| \ttb \|_\infty$ we need to work in both the vorticity formulation \eqref{3d_euler_vorticity} and the velocity formulation \eqref{3d_euler} of the equations, and so we also obtain the estimate on $\| V \|_\infty$.  Thanks to the first three estimates in \eqref{V_and_ttb_prop} we are able to obtain precise estimates on the particle trajectories $\eta$ of the velocity field $v[\tb ]$ (see Section~\ref{sec_part_traj} below), as well as obtain the last estimate in \eqref{V_and_ttb_prop}, which is needed for the claim regarding $\| \tb \|_{H^\beta}$ in the claim \eqref{black_box_claims} of Theorem~\ref{thm_black_box}. 
\end{rem}

\begin{proof} [Proof of Proposition~\ref{prop_bckgrd_err}.]

Let $T_0\geq 0$ be such that 
\[
\| \ttb \|_{C^\alpha } , \| V \|_\infty \leq 1
\]
for $t\in [0,T_0]$. Note that $T_0 > 0$, since $\ttb=V =0$ at time $0$ and $\otb $ (the initial condition for $\tb$) is smooth and compactly supported.

We now estimate $\| \ttb \|_\infty$, $\| \ttb \|_{C^\alpha }$, $\| V \|_\infty$ on $[0,T_0]$.

From \eqref{ttb_eq} we have
\eqnb\label{id1a}
\begin{split}
\frac{\d }{\d t} \| \ttb \|_\infty &\lec \left( \| \ttb \|_\infty + \| \otb \|_\infty \right) \left( \| v[\ttb ] \|_{C^1} + \| v[\otb ] - \ov [\otb ] \|_{C^1} \right)  +\| \ttb \|_\infty \| \ov [\otb ] \|_{C^1} \\
&+ \left( \| v[\otb ] - \ov [\otb ] \|_\infty + \| v [\ttb ]\|_\infty \right) \| \otb \|_{C^1} + \| G \|_\infty\\
&\lec_{\alpha,\varepsilon} (\| \ttb \|_\infty + \log N ) \left(  \| \ttb \|_\infty \log (2+\| \ttb \|_{C^\alpha }) + N^{-1+\varepsilon }\log N   \right) \\
&\hspace{2cm}+ (\lambda^{-1} N^{-2+\varepsilon}\log N + \| V \|_\infty ) \lambda N \log N  + N^{-1} (\log N)^2 \\
&\lec \| \ttb \|_\infty \log N + \| V \|_\infty \lambda N \log N +  N^{-1+\varepsilon} (\log N)^2,
\end{split}
\eqne
where we used \eqref{vel_est_general}, \eqref{log_defor}, \eqref{vel_error}, \eqref{otb_Ck} in the second inequality, and we noted that $\| v [\ttb ] \|_\infty \lec  \| V \|_\infty + \lambda^{-1}N^{-2+\varepsilon} \log N$ in the last line.

As for $V$, we first note that \eqref{V_vs_vttb} and \eqref{vel_error} give
\eqnb\label{V_C1}
\|  V \|_{C^1} \leq \|  v [\ttb ] \|_{C^1} + \| v[\otb ] - \ov [\otb ] \|_{C^1} \lec_{\alpha,\varepsilon}  \| \ttb \|_\infty \log (2+\| \ttb \|_{C^\alpha }) + N^{-1+\varepsilon } \log N \lec \| \ttb \|_\infty + N^{-1+\varepsilon} \log N 
\eqne
and, using boundedness of the Riesz transform in $C^\alpha $,
\eqnb\label{V_C1alpha}
\|  V \|_{C^{1,\alpha }} \leq \|  v [\ttb ] \|_{C^{1,\alpha } } + \| v[\otb ] - \ov [\otb ] \|_{C^{1,\alpha }} \lec_{\alpha ,\varepsilon} \| \ttb \|_{C^\alpha } +\lambda^\alpha  N^{\alpha -1+\varepsilon} \log N \lec 1.
\eqne

As for $\| V \|_\infty$, we get from \eqref{V_eq} that 
\eqnb\label{V_001}\begin{split}
\frac{\d }{\d t} \| V \|_{\infty} &\leq  \| V \|_{\infty} \| \ov \|_{C^{1}} + \| F \|_{\infty } + \| \nabla p \|_\infty \\
&\lec  \| V \|_{\infty} \log N + \lambda^{-1} N^{-2} (\log N)^2  + \| \nabla p \|_\infty ,
\end{split}
\eqne
where we write $\ov \coloneqq \ov [\otb ]$ for brevity. Taking the divergence of the equation  \eqref{V_eq} for $V$, we see that $p\coloneqq p_1 + p_2$ is the potential-theoretic solution of
\eqnb\label{eq_for_p_bcg}
-\Delta p = \p_i  V_j \p_j  V_i + 2\p_i  V_j \p_j \ov_i  - \div \overline{F} , 
\eqne
where we recalled from \eqref{eq:p2} that $\div \overline{F} = \div F - \Delta p_{2}$. Therefore,
\[
\nabla p = \nabla  \p_i (-\Delta )^{-1} (V_j \p_j V_i +2\p_j V_i \ov_j + 2 V_i \div \ov ) + \nabla \div (-\Delta )^{-1} \overline{F} ,
\]
which gives that, by \eqref{Lapl_inv},
\[\begin{split}
\| \nabla  p \|_{\infty } &\lec_\alpha \| V_i \p_i V \|_\infty \log (2+ \| V_j \p_j V \|_{C^\alpha } )  + \|  \ov_j  \p_j V \|_\infty \log (2+ \|  \ov_j  \p_j V \|_{C^\alpha }) \\
&\hspace{4cm}+ \| V \div\, \ov  \|_\infty \log (2+ \|  V \div \, \ov   \|_{C^\alpha } )  + \| \overline{F}  \|_\infty \log \left( 2+ \left\| \overline{F}  \right\|_{C^\alpha } \right)  \\
&\lec  \| V \|_{\infty} + \| \ov  \|_{\infty } \log(2+  \| \ov \|_{C^\alpha }) +\| V\|_{\infty } \| \div \, \ov \|_{\infty } \log (2+ \| \div \, \ov \|_{C^\alpha } ) + \lambda^{-1}N^{-2} (\log N)^2 \\
&\lec \| V \|_\infty  + \|\ttb \|_\infty \lambda^{-1} N^{-1 } \log N + \lambda^{-1}N^{-2} (\log N)^2,
\end{split}
\]
where we used \eqref{V_C1alpha} and \eqref{est_F} in the second inequality and \eqref{ov_otb_div} in the last inequality. Inserting this into \eqref{V_001} gives
\[
\frac{\d }{\d t} \| V \|_\infty \lec \| V \|_\infty \log N + \|\ttb \|_\infty \lambda^{-1} N^{-1 } \log N + \lambda^{-1}N^{-2} (\log N)^2.
\]
This and the above estimate on $\frac{\d }{\d t} \| \ttb \|_\infty$ gives that 
\[
\frac{\d }{\d t } \left( \| \ttb \|_\infty + \lambda N \| V \|_\infty \right) \lec \left( \| \ttb \|_\infty + \lambda N \| V \|_\infty \right)\log N + N^{-1} (\log N)^2
\]
Thus
\eqnb\label{V_and_ttb}
\left( \| \ttb \|_\infty + \lambda N \| V \|_\infty \right) \lec N^{-1+at } (\log N)^2 
\eqne
for $t\in [0,T_0]$, where $a>0$ is an absolute constant. In particular, H\"older interpolation and \eqref{V_C1} give 
\eqnb\label{V_Calpha}
\| V \|_{C^\alpha } \lec \| V \|_\infty^{1-\alpha} \| V\|_{C^1}^\alpha  \lec_\varepsilon \| V \|_\infty^{1-\alpha} \left( \|\ttb \|_{\infty} + N^{-1+\varepsilon} \log N \right)^\alpha  \lec \lambda^{\alpha-1}  N^{\alpha-2+\varepsilon+at } (\log N)^2 .
\eqne

This allows us to estimate $\| \ttb \|_{C^\alpha }$. We have
\eqnb\label{ttb_Calpha_comp}
\begin{split}
\frac{\d }{\d t} \| \ttb \|_{C^\alpha} &\lec \left( \| \ttb \|_{C^\alpha} + \| \otb \|_{C^\alpha} \right) \left( \| v[\ttb ] \|_{C^1} + \| v[\otb ] - \ov [\otb ] \|_{C^1} \right)+\left( \| \ttb \|_\infty + \| \otb \|_\infty \right) \left( \|\nabla  v[\ttb ] \|_{C^{\alpha}} + \| v[\otb ] - \ov [\otb ] \|_{C^{1,\alpha}} \right) \\
& +\| \ttb \|_\infty \| \ov [\otb ] \|_{C^{1,\alpha}}+\| \ttb \|_{C^\alpha } \| \ov [\otb ] \|_{C^1} \\
&+ \left( \| v[\otb ] - \ov [\otb ] \|_{C^\alpha} + \| v [\ttb ]\|_{C^\alpha } \right) \| \otb \|_{C^1}+ \left( \| v[\otb ] - \ov [\otb ] \|_\infty + \| v [\ttb ]\|_\infty \right) \| \otb \|_{C^{1,\alpha}} + \| G \|_{C^\alpha}\\
&\lec_{\alpha, \varepsilon} \left(\| \ttb \|_{C^\alpha } + (\lambda N)^\alpha \log N \right) \left(  \| \ttb \|_\infty + N^{-1+\varepsilon}\log N   \right)+\left(\| \ttb \|_\infty + \log N \right) \left(  \| \ttb \|_{C^\alpha }+ \lambda^\alpha N^{\alpha-1+\varepsilon}\log N   \right)\\
& +\| \ttb \|_\infty \lambda^\alpha N^{\alpha -1} \log N + \| \ttb \|_{C^\alpha } \log N \\
&+(\lambda^{\alpha-1} N^{\alpha -2+\varepsilon}\log N + \| V \|_{C^\alpha } ) \lambda N \log N +(\lambda^{-1} N^{-2+\varepsilon}\log N + \| V \|_\infty ) (\lambda N)^{1+\alpha} \log N  + \lambda^{\alpha } N^{\alpha -1} \log^2 N \\
&\lec \| \ttb \|_{C^\alpha } (N^{-1+\varepsilon+ct} (\log N)^2+\log N)+\| \ttb \|_\infty (\lambda N )^\alpha \log N  + \lambda^\alpha N^{\alpha -1+\varepsilon+ct } ( \log N)^3 \\
&\lec \| \ttb \|_{C^\alpha } \log N + \lambda^\alpha N^{\alpha -1+\varepsilon+ct } ( \log N)^3 
\end{split}
\eqne
Thus 
\eqnb\label{ttb_Calpha}
\| \ttb \|_{C^\alpha } \lec \lambda^\alpha N^{\alpha -1+\varepsilon+a t} (\log N)^4
\eqne
for  $t\in [0,T_0]$, where $a>0$ is an absolute constant.

Since we have obtained the estimates for $\| \ttb \|_\infty$, $\| \ttb \|_{C^\alpha}$, $\| V \|_\infty$ in \eqref{V_and_ttb_prop} on $[0,T_0]$, we now observe that, due to our choice \eqref{choice_alpha} of $\alpha$, setting $T\coloneqq 1/4a$, the estimates guarantee $\| \ttb \|_{C^\alpha }, \| V \|_\infty \leq 1$ for all $t\in [0,T]$ if $\lambda $ is taken sufficiently large. Thus, a continuity argument guarantees that the estimates \eqref{V_and_ttb_prop} also hold on $[0,T]$, as required.

The estimate for $\| \ttb \|_{C^\beta}$, for any $\beta \in (0,1)$, now follows from the same computation as in \eqref{ttb_Calpha_comp} above. \\

The proof of the $H^{\bt'}$ estimate can be done along similar lines. First, let us obtain the $L^{2}$ estimate: we rewrite the equation for $\Phi$ \eqref{ttb_eq} as \begin{equation*}
	\begin{split}
		\p_t \ttb = \tb \cdot \nabla \left( v [ \ttb ] + (v[\otb ]-\ov [\otb ] ) \right) + \ttb \cdot \nabla \ov [ \otb ] + \left( \ov [\otb ] - v [\tb ] \right) \cdot \nabla ( \Phi + \otb ) - \ov [\otb ] \cdot \nabla \ttb - G.
	\end{split}
\end{equation*} Then, we integrate it against $\Phi$ to obtain \begin{equation*}
	\begin{split}
		\frac{\ud}{\ud t} \norm{\Phi}_{2} &\lesssim \norm{ \tb \cdot \nabla v [ \ttb ] }_{2} +  \norm{ \tb \cdot \nabla (v[\otb ]-\ov [\otb ] ) }_{2} + \norm{\ttb \cdot \nabla \ov [ \otb ]}_{2} \\
		 & \qquad + \norm{( \ov [\otb ] - v [\tb ] ) \cdot \nabla \otb}_{2} +\left(\norm{\nabla \cdot ( \ov [\otb ] - v [\tb ] ) }_{\infty}+  \norm{ \div \ov [\otb ]}_{\infty}\right) \norm{\ttb}_{2} +\norm{ G}_{2} \\
		 & \lesssim \left( \norm{ \tb }_{\infty} +  \norm{\nabla \ov [ \otb ]}_{\infty} +  \norm{ \div \ov [\otb ]}_{\infty} + \norm{\nabla V}_{\infty} \right) \norm{\Phi}_{2} \\
		 & \qquad  + \norm{ \tb }_{2} \norm{\nabla (v[\otb ]-\ov [\otb ] ) }_{\infty} +  \norm{\ov [\otb ] - v [\tb ] }_{\infty}\norm{ \nabla \otb}_{2} +\norm{ G}_{2} .
	\end{split}
\end{equation*} Then, we bound \begin{equation*}
\begin{split}
	 \norm{ \tb }_{\infty} +  \norm{\nabla \ov [ \otb ]}_{\infty} +  \norm{ \div \ov [\otb ]}_{\infty}  + \norm{\nabla V}_{\infty}  \lesssim \norm{\otb}_{\infty} +  \norm{ \Phi }_{\infty} +  \norm{\nabla \ov [ \otb ]}_{\infty} +  \norm{ \div \ov [\otb ]}_{\infty}  + \norm{\nabla V}_{\infty}  \lesssim \log N 
\end{split}
\end{equation*} using \eqref{otb_Ck}, \eqref{V_and_ttb}, \eqref{log_defor},  \eqref{ov_otb_div}, and \eqref{V_C1}. Next, we bound \begin{equation*}
\begin{split}
	\norm{ \tb }_{2} \norm{\nabla (v[\otb ]-\ov [\otb ] ) }_{\infty} \le (\norm{ \otb }_{2} + \norm{ \Phi }_{2}) \norm{\nabla (v[\otb ]-\ov [\otb ] ) }_{\infty} \lesssim N^{-1+\varepsilon}\log N (\lmb N)^{-s}, 
\end{split}
\end{equation*} where we used \eqref{otb_Hb}, \eqref{vel_error}, and assumed by continuity that $\norm{ \otb }_{2} \ge \norm{ \Phi }_{2}$. Similarly, we have \begin{equation*}
\begin{split}
	 \norm{\ov [\otb ] - v [\tb ] }_{\infty}\norm{ \nabla \otb}_{2} \lesssim (\lmb N)^{-1} N^{-1+at} (\log N)^{2} (\lmb N)^{1-s}, 
\end{split}
\end{equation*} using \eqref{V_and_ttb} and \eqref{otb_Hb}. Lastly, recalling the formula of $G$ from the equation following \eqref{def_G}, we can compute explicitly that \begin{equation*}
\begin{split}
	\norm{G}_{2} \lesssim N^{-1}(\lmb N)^{-s}(\log N),
\end{split}
\end{equation*} observing that $\otb_{3}$ is $N^{-1}$ smaller in $L^2$ and in $H^{1}$ relative to $\otb$. Collecting the estimates, \begin{equation*}
\begin{split}
		\frac{\ud}{\ud t} \norm{\Phi}_{2} &\lesssim \log N  \norm{\Phi}_{2} +  N^{-1+\varepsilon+at}(\lmb N)^{-s}(\log N)^{2}
\end{split}
\end{equation*} and by Gronwall estimate, \begin{equation*}
\begin{split}
	  \norm{\Phi}_{2} \lesssim N^{-1+\varepsilon + c t}(\lmb N)^{-s}(\log N)^{2}. 
\end{split}
\end{equation*} Next, to obtain the $H^{1}$ and $H^{2}$ estimate for $\Phi$, we differentiate the equation \eqref{ttb_eq} for $\Phi$  and repeat similar estimates. We emphasize, that one of the most difficult terms in the $H^2$ estimate is the nonlinear term of the form $D^2 v[\ttb ] \cdot \nabla \ttb$. In order to estimate it in $L^2$ we cannot bound it by $\| D^2 v[\ttb ] \|_\infty \| \nabla \ttb \|_2$, since we do not even have an estimate on $\| \ttb \|_{C^1}$ (see also the remark below). Instead, we can bound it by 
\begin{equation*}
	\begin{split}
		\| D^2 v[\ttb ] \|_4 \lec \| \nabla \ttb \|_4 \lec \norm{\ttb}_{L^\infty}^{1/2} \norm{ \nabla^2 \ttb }_{L^2}^{1/2} 
	\end{split}
\end{equation*} using the Gagliardo--Nirenberg interpolation inequality in $\mathbb{R}^{3}$.

Then, the estimate for general $H^{\bt'}$ with $\bt' \in (0,2)$ follows from interpolation, concluding the proof. \end{proof}

We now fix $\varepsilon >0$ sufficiently small and take $T>0$ sufficiently small so that the claim of Proposition~\ref{prop_bckgrd_err} holds and that 
\eqnb\label{choice_vareps}
\| \ttb \|_\infty \leq N^{-1/2} , \quad \| \ttb \|_{C^\alpha } \leq (\lambda N)^\alpha N^{-1/2},\quad \| V \|_\infty \leq \lambda^{-1} N^{-3/2} 
\eqne
on $[0,T]$.

\begin{rem}\label{rmk:C1}
We note that the above control of $C^\beta$ norm of $\ttb$ does not extend directly to $C^1$. Indeed, if we want to control $\| \ttb \|_{C^1}$ then the singular integral operators (in the pressure equation as well as in the Biot Savart law $\nabla v[\ttb ] \sim SIO (\ttb )$) require us to control $\| \ttb \|_{C^{1}}$ for some $\gamma >0$. Note that, in estimating $\| \ttb \|_{C^{1,\gamma }}$ we have a term  $\| \otb \cdot \nabla \p_k v[\ttb ] \|_{\infty}$ which gives $(\log N)^2 \| \ttb \|_{C^1}$, which is too large. Indeed, estimating $\| V \|_{C^2} $ directly we also get $\| D^2 V \|_\infty (\log N )^2$ on the right-hand side (from the term $\nabla \p_j V_i \p_i \ov_j$ in the pressure estimate \eqref{eq_for_p_bcg}), which is also too much (i.e. also gives exponential growth).
\end{rem}

Note that \eqref{choice_vareps} gives in particular that
\eqnb\label{tb_linfty_Cbeta}
\| \tb \|_\infty \lec \log N
\eqne
\eqnb\label{vtb_linfty}
\begin{split}
\| v [\tb ] \|_\infty &\leq \| V \|_\infty + \| \ov [\otb ] \|_\infty \lec \lambda^{-1} N^{-2+ct } (\log N)^2 + \lambda^{\frac12-s} N^{-1-s} \lec (\lambda N)^{-1} \log N\\
\| v[\tb ] - \ov [\otb ] \|_\infty &\leq \| v[\ttb ] \|_\infty + \| v[\otb ] - \ov [\otb ] \|_\infty \leq \| V \|_\infty +2 \| v[\otb ] - \ov [\otb ] \|_\infty \leq  \lambda^{-1} N^{-2+ct } (\log N )^2 
\end{split}
\eqne
and similarly
\eqnb\label{vtb_C1}
\begin{split}
\|\nabla v [\tb ] \|_\infty &\leq \| \nabla v[\otb ] - \ov [\otb ] \|_\infty + \| \nabla \ov [\otb ] \|_\infty + \| \nabla v[\ttb ] \|_\infty \\
& \lec_\alpha N^{-1} \log N + \log N + \| \ttb \|_\infty \log (2+ \| \ttb \|_{C^\alpha } ) \\
&\lec \log N\\
\| \nabla v[\tb ] - \nabla \ov [\otb ] \|_\infty &\lec \| \nabla v[\ttb ] \|_\infty + \| v[\otb ] - \ov [\otb ] \|_{C^1} \lec N^{-1+ct } (\log N)^2
\end{split}
\eqne
for $t\in [0,T]$, where we used \eqref{vel_error}.

Moreover, since the estimate on $\| \ttb \|_{C^\alpha} $ in \eqref{V_and_ttb_prop} is in a subcritical topology, we can also obtain higher order estimates on $\tb$ on time interval $[0,T]$.
\begin{lemma}[Higher order a~priori estimates on $\phi$, $v{[}\phi {]}$]\label{lem_high}
We have that
\eqnb\label{tb_Ck}
\begin{split}
\| \tb \|_{C^{k,\beta } } &\lec_{k} (\lambda N)^{k+\beta } N^{c_{k} t} \log N ,\\
\| v[\tb] \|_{C^{k,\beta } } &\lec_{k} (\lambda N)^{k+\beta -1} N^{c_{k} t} \log N\\
\end{split}
\eqne
for all $t\in [0,T]$, $k\geq 0$, $\beta \in [0,1)$.
\end{lemma}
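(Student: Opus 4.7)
The plan is to proceed by induction on $k \geq 0$, first obtaining the H\"older estimates on $\tb$ and then deducing the bound on $v[\tb]$ via Biot-Savart.

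For the base case $k = 0$ with $\bt = 0$, the bound $\|\tb\|_\infty \lec \log N$ is immediate from \eqref{tb_linfty_Cbeta}. For $k = 0$ and $\bt \in (0,1)$, I would write $\tb = \otb + \ttb$ and apply the triangle inequality, using \eqref{otb_Ck} to bound $\|\otb\|_{C^\bt} \lec (\lmb N)^\bt \log N$ and the $C^\bt$ estimate on $\ttb$ from Proposition~\ref{prop_bckgrd_err}; the latter contributes only a lower-order term thanks to the choice \eqref{choice_vareps}.

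For the inductive step with $k \geq 1$, I would apply $\nabla^k$ to the vorticity equation \eqref{3d_euler_vorticity}, producing a transport equation for $\nabla^k \tb$ with a right-hand side given by the stretching term and a commutator:
\begin{equation*}
\p_t \nabla^k \tb + v[\tb]\cdot \nabla \nabla^k \tb = \nabla^k \bigl( \tb \cdot \nabla v[\tb]\bigr) - [\nabla^k, v[\tb]\cdot \nabla] \tb.
\end{equation*}
The standard $C^\bt$ estimate for transport equations along the Lipschitz flow of $v[\tb]$, combined with tame Moser-type product estimates on the right-hand side and the Calder\'on-Zygmund bound $\|\nabla^{k+1} v[\tb]\|_{C^\bt} \lec \|\nabla^k \tb\|_{C^\bt} + \|v[\tb]\|_\infty$, yields
\begin{equation*}
\tfrac{\d}{\d t} \|\tb\|_{C^{k,\bt}} \lec \bigl(\|\nabla v[\tb]\|_\infty + \|\tb\|_\infty\bigr) \|\tb\|_{C^{k,\bt}} + C_k \cdot (\text{lower-order terms in } \tb \text{ and } v[\tb]),
\end{equation*}
where the lower-order terms are controlled by the inductive hypothesis. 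Since $\|\nabla v[\tb]\|_\infty + \|\tb\|_\infty \lec \log N$ on $[0,T]$ by \eqref{tb_linfty_Cbeta} and \eqref{vtb_C1}, Gronwall produces a growth factor of $N^{c_k t}$. Combined with the initial data bound $\|\otb\|_{C^{k,\bt}} \lec (\lmb N)^{k+\bt} \log N$ from \eqref{otb_Ck}, this yields the first claim in \eqref{tb_Ck}.

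The velocity estimate then follows from Biot-Savart. For $k \geq 1$, Calder\'on-Zygmund gives $\|\nabla^k v[\tb]\|_{C^\bt} \lec \|\nabla^{k-1} \tb\|_{C^\bt}$, which combined with the $L^\infty$ estimate $\|v[\tb]\|_\infty \lec (\lmb N)^{-1} \log N$ from \eqref{vtb_linfty} yields the claimed bound. The case $k = 0$, $\bt \in (0,1)$ follows by interpolating between \eqref{vtb_linfty} and $\|\nabla v[\tb]\|_\infty \lec \log N$ from \eqref{vtb_C1}. The main obstacle I anticipate is keeping careful track of the interpolation constants in the Moser product estimates so that the penalty factor $N^{c_k t}$ depends only on $k$ and absorbs all lower-order contributions without picking up any growth in $\lmb$; this works precisely because both $\|\tb\|_\infty$ and $\|\nabla v[\tb]\|_\infty$ are bounded by $\log N$ uniformly in $\lmb$, so all Gronwall constants depend on $\log N$ rather than on a positive power of $\lmb$.
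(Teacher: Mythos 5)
The main idea, namely inducting on $k$, controlling the top-order terms by $\|\nabla v[\phi]\|_\infty + \|\phi\|_\infty \lesssim \log N$ and applying Gronwall, is the same as in the paper. But you take a different route through the induction: you work only with the vorticity formulation \eqref{3d_euler_vorticity} together with Calder\'on--Zygmund bounds for the Biot--Savart operator, whereas the paper's proof is organized in two stages, first deriving the Lebesgue-norm velocity bound $\| v[\phi]\|_{C^{k+1}}$ from the \emph{velocity} formulation \eqref{3d_euler} (where the pressure is estimated via \eqref{Lapl_inv}), and only afterwards passing to the vorticity formulation to obtain $\|\phi\|_{C^{k,\alpha}}$. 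This ordering in the paper is not cosmetic; it is what allows the Gronwall coefficient to remain of size $\log N$ at the $\beta=0$ endpoint.

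That endpoint is exactly where your argument has a gap. The Calder\'on--Zygmund estimate you invoke, $\|\nabla^{k+1} v[\phi]\|_{C^\beta} \lesssim \|\nabla^k \phi\|_{C^\beta} + \|v[\phi]\|_\infty$, is valid for $\beta\in(0,1)$ but \emph{false} at $\beta=0$ (the Riesz transforms are not bounded on $L^\infty$), and the lemma explicitly asserts the estimate for all $\beta\in[0,1)$, with the $\beta = 0$ (Lebesgue $C^k$) case being the one used repeatedly later (e.g.\ in Lemma~\ref{lem_eta} and in the proof of Proposition~\ref{prop_pert_error}). The only $L^\infty$-based substitute is the logarithmic estimate underlying \eqref{vel_est_general}--\eqref{Lapl_inv}, namely $\|\nabla^{k+1} v[\phi]\|_\infty \lesssim \|\nabla^k\phi\|_\infty \log(2 + \|\phi\|_{C^{k,\alpha}})$, which contributes an extra factor $\log N$ to the top-order stretching term $\phi\cdot\nabla^{k+1}v[\phi]$; your Gronwall coefficient then becomes $(\log N)^2$ rather than $\log N$, and $e^{c(\log N)^2 t}=N^{ct\log N}$ is superpolynomial in $N$, exceeding the claimed $N^{c_k t}$. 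So as written the $\beta = 0$ case does not close. One could recover it by first proving the $\beta\in(0,1)$ cases (where your tame product and CZ-in-$C^\beta$ estimates do close with coefficient $C_\beta\log N$) and then extracting $\|\phi\|_{C^k}$ by Gagliardo--Nirenberg interpolation between levels $k-1$ and $k$ at a fixed positive H\"older exponent; you mention interpolation only for $\|v[\phi]\|_{C^{0,\beta}}$, not for $\phi$ at $\beta=0$. The paper instead avoids the singular integral at $\beta=0$ altogether: in the velocity formulation the top-order commutator term carries the honest coefficient $\|\nabla v[\phi]\|_\infty\lesssim\log N$, and this Lebesgue bound on $v[\phi]$ is then available as a source when estimating $\|\phi\|_{C^{k,\alpha}}$ from \eqref{3d_euler_vorticity}, which is why the two-step structure is used.
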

\begin{proof}
Indeed, the case $k=0$, $\beta = \alpha$ follows from \eqref{otb_Ck} and \eqref{V_and_ttb_prop} by writing $\| \tb \|_{C^{\alpha}} \leq \| \otb \|_{C^\alpha} + \|\ttb\|_{C^\alpha } \lec (\lambda N )^{\alpha} \log N   $, and by interpolating the first inequalities in \eqref{vtb_linfty}--\eqref{vtb_C1}. Moreover, the case $k=1$, $\beta =0$ for $v[\tb]$ follows from \eqref{vtb_C1}. For the remaining cases, we will use induction to show that, for each $k\geq 0$, $t\in [0,T]$,
\eqnb\label{ind_claim}
\| v[ \tb ] \|_{C^{k+1}} \lec_k (\lambda N)^{k} N^{c_kt } \log N,\qquad \| \tb \|_{C^{k,\alpha} } \lec_k (\lambda N )^{k+\alpha } N^{c_k t} (\log N)^2 .
\eqne
We have already verified the case $k=0$. For $k\geq 1$ we have, from the velocity formulation,
\[\frac{\d }{\d t } \| v[\tb ] \|_{C^{k+1}} \lec_k \sum_{l=0}^{k}  \| v[\tb ] \|_{C^{k+1-l}} \| v [\tb ]\|_{C^{l+1}} + \| P \|_{C^{k+2}}\]
for each $k\geq 0$,
where  $P$ is the potential-theoretic solution to 
\[
-\Delta P = \p_i v_j [\tb ] \p_j v_i [\tb ]
\]
in $\R^3$. Note that the inductive assumption gives $\| v[\tb ]\|_{C^{k,\alpha }} \lec_k (\lambda N)^{k+\alpha -1} N^{c_kt} \log N$, so that the elliptic estimate \eqref{Lapl_inv} gives
\[
\|   P \|_{C^{k+2}} \lec_k  \| v [\tb ]\otimes v [\tb ] \|_{C^k}^2 \log N.
\]
Therefore 
\[
\begin{split}
\frac{\d }{\d t } \| v[\tb ] \|_{C^{k+1}} &\lec_k \| v\|_{C^{k+1}} \log N +  \sum_{l=1}^{k-1}  \| v \|_{C^{k+1-l}} \| v \|_{C^{l+1}} + \| v [\tb ]\otimes v [\tb ] \|_{C^k}^2 \log N\\
&\lec_k \| v\|_{C^{k+1}} \log N +    (\lambda N)^{k-2} N^{c_k t} (\log N)^5 ,
\end{split}\]
so that, from the ODE fact \eqref{ode_fact1},
\[
\| v[\tb ]  \|_{C^{k+1}} \lec_k \| v[\otb ] \|_{C^{k+1}} N^{c_k t} + (\lambda N)^{k-2} N^{c_k t} (\log N)^5 \lec_k (\lambda N)^{k} N^{c_k t } \log N,
\]
where we used \eqref{vel_est_general} in the last step. In particular, $\| \tb \|_{C^1} \lec \lambda N^{1+ct} \log N$, so that the $C^{k,\alpha}$ estimate in the vorticity formulation \eqref{3d_euler_vorticity} gives   
\eqnb\label{tempp}
\begin{split}
\frac{\d }{\d t} \| \tb \|_{C^{k,\alpha}} &\lec \| v[\tb ] \|_{C^{k,\alpha }} \| \tb \|_{C^1} + \|v[\tb ] \|_{C^1} \| \tb \|_{C^{k,\alpha }} + \| \tb \|_{\infty } \| v[\tb ]\|_{C^{k+1,\alpha }}\\
& \lec_k \| \tb \|_{C^{k,\alpha} } \log N + \| \tb  \|_{C^{k-1,\alpha  }} \lambda N^{1+ct} \log N \\
&\lec_k \| \tb \|_{C^{k,\alpha} } \log N +  (\lambda N)^{k+\alpha } N^{c_kt} (\log N)^2 ,
\end{split}
\eqne
so that, again using the ODE fact \eqref{ode_fact1}, $\| \tb \|_{C^{k,\alpha} } \lec_k (\lambda N)^{k+\alpha } N^{c_kt} (\log N)^2$, completing the induction.

The claim \eqref{tb_Ck} now follows using the first estimate in \eqref{ind_claim} and H\"older  interpolation. 
\end{proof}

\subsection{The perturbation vorticity $\tp$}\label{sec_pert}
We define  
\begin{equation}\label{otp-def}
\begin{split}
	\otp (x,0) \coloneqq K^{-1} \lt^{\frac32 -s} \nt^{-s} \sin (\lt \nt x_3 )G(\lt x )  := K^{-1} \begin{pmatrix}
		\lt^{\frac52 -s }\nt^{-s} \sin (\lt \nt x_3 ) \int_{-1/\lt }^{x_1} \p_2 g (\lt h_1, \lt x_2, \lt x_3 )  \d h_1 \\
		\lt^{\frac32 -s} \nt^{-s} g(\lt x ) \sin (\lt \nt x_3 ) \\
		0
	\end{pmatrix},
\end{split}
\end{equation} where we choose 
\eqnb\label{choice_of_nt_lt}
\nt = \lt^a \qquad \text{ and } \qquad \lt= \lambda^B
\eqne
for some $a>0$, $B>1$ (fixed in \eqref{choice_a} and \eqref{choice_B} below, respectively), such that
\eqnb\label{restr1}
\lt^{\frac32-s} \nt^{-s} N^T \to 0
\eqne
as $\lambda \to \infty$, where $T>0$ is fixed by the background error estimate (recall \eqref{choice_vareps}). We note that the choice of $K\geq 1$ does not influence any of the properties of the results in this section (note that the equation \eqref{eq_otp} below is linear in $\otp$). Thus, for simplicity of the presentation we assume that 
\[
K=1
\]
for the remainder of Section~\ref{sec_pert}. 
Note that, similarly to \eqref{otb_Ck}--\eqref{otb3_Ck} we have that
\eqnb\label{otp_aniso}
\| \otp_2 (\cdot ,0) \|_2 \sim (\lt \nt)^{-s} \qquad \text{ while } \qquad \| \otp_1 (\cdot , 0) \|_2 \sim \lt^{-s} \nt^{-1-s}. 
\eqne

 We define the \emph{pseudosolution $\otp $ of the perturbation} to be the solution to
\eqnb\label{eq_otp}
\p_t \otp + v[\tb ]  \cdot \nabla \otp = \otp \cdot \nabla v[\tb ] .
\eqne
By the Cauchy formula, we can write $\otp$ explictly,
\eqnb\label{otp}
\otp (x,t) = \nabla \eta (y,t) \otp (y,0),
\eqne
where $\eta (y,t) $ the solution of the Lagrangian ODE 
\eqnb\label{new_eta_def}
\begin{split}
\p_t \eta (y,t) &= v[\tb ] (\eta(t) , t )
\end{split}
\eqne
with initial condition $\eta (y,0)=y$, and $y\coloneqq \eta^{-1} (x,t)$.

Clearly, in order to quantify the regularity of $\otp(\cdot ,t)$, we first need to better understand the particle trajectories $\eta (y,t)$, particularly for $y\in B(\lt^{-1})$. 

\subsubsection{The particle trajectories $\eta$ and estimates on the pseudosolution $\otp$}\label{sec_part_traj}
In this section we discuss some estimates on the particle trajectories $\eta (y,t)$ for $|y|\in B(\lt^{-1})$, as well as the preimage $\eta^{-1}$ (Lemma~\ref{lem_eta}) and deduce estimates on $\otp$ and $v[\otp]$ (Lemmas~\ref{lem_otp}--\ref{lem_votp}).

We first note that 
\[
\eta(0,t) =0
\]
for $t\in [0,T]$, due to the odd symmetry of $v[\tb ]$, recall \eqref{tb_symmetry}. This and \eqref{vtb_C1} show that
\[
\p_t | \eta  | \leq | v[\tb ] (\eta ) - v[\tb ] (0 ) | \leq \| \nabla v[\tb ] \|_\infty  | \eta  |\lec | \eta | \log N ,
\]
so that
\eqnb\label{where_eta}
|\eta (y,t) | \leq  N^{ct} \lt^{-1}
\eqne
for all $t\in [0,T]$, $y\in B(\lt^{-1})$. In particular
\eqnb\label{supp_otp}
\supp \, \otp  \subset B(0, \lt^{-1} N^{\widetilde{c}t} )=: B_t
\eqne
for all $t \in [0,T]$, where $\widetilde{c}>0$ is a constant, and we also used \eqref{temp000}. We need some further estimates on $\eta$ on $\supp \, \otp$, which we state in the following.
\begin{lemma}[Estimates on particle trajectories $\eta (y,t)$ for $|y|\leq \lt^{-1}$] The following estimates hold for $t\in [0,T]$, if $B>1$ is chosen sufficiently large. \label{lem_eta}
\begin{enumerate}
\item[(i)] Let $A(t) \coloneqq \nabla \eta (0,t)$. Then
\eqnb\label{nabla_eta}
A(t) =  \begin{pmatrix}
1 & 0 & 0\\
0 & N^{t/2} & 0\\
0 & 0 & N^{-t/2}
\end{pmatrix}\exp (O(N^{-1/2}))
\eqne
and
\eqnb\label{new_grad_eta}
\| \nabla \eta -A  \|_{L^\infty (B(\lt^{-1}))} \leq \lt^{-1}\lambda^{c}.
\eqne

\item[(ii)] For each $k\geq 2$,
\eqnb\label{eta_Ck}
 \begin{split}
 \| D^k \eta \|_{L^\infty (B_0 )} &\lec (\lambda N)^{k-1} N^{c_k t } (\log N)^{c_k} .
 \end{split}\eqne
\item[(iii)] Letting $y\coloneqq \eta^{-1} (x,t)$,
we have that 
\eqnb\label{nabla_y}
\| \nabla y - A^{-1} \|_{L^\infty (B_t )} \lec \lt^{-1} \lambda^c
\eqne
and, for each $k\geq 1$,
\eqnb\label{Ck_y}
\begin{split}
\| D^k  y \|_{L^\infty (B_t )} &\lec (\lambda N)^{k-1} N^{c_k t} (\log N)^{c_k}.
\end{split}
\eqne
\end{enumerate}
\end{lemma}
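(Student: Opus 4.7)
\textbf{Proof plan for Lemma~\ref{lem_eta}.} My plan is to control $\nabla\eta$ along trajectories via the linear ODE $\partial_t\nabla\eta(y,t) = \nabla v[\phi](\eta(y,t),t)\,\nabla\eta(y,t)$: first analyze it at $y=0$ (producing $A(t)$), then propagate to $y\in B(\tilde\lambda^{-1})$ by a Gronwall argument, and finally invert $\nabla\eta$ to obtain the estimates for $y = \eta^{-1}(x,t)$.

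For (i), the odd symmetry \eqref{tb_symmetry} combined with uniqueness for \eqref{new_eta_def} gives $\eta(0,t) = 0$, so $A(t) = \nabla\eta(0,t)$ satisfies $A' = \nabla v[\phi](0,t)\,A$ with $A(0)=I$. Differentiating the explicit pseudovelocity \eqref{ov_otb_def} at $x=0$ (using $g(0)=1$, $\sin 0=0$, $\cos 0=1$) yields $\nabla \bar v[\bar\phi](0) = D$, where $D := \operatorname{diag}(0,\tfrac12\log N,-\tfrac12\log N)$. The error $E(t) := \nabla v[\phi](0,t) - D$ is controlled by $\|\nabla V\|_\infty + \|v[\bar\phi]-\bar v[\bar\phi]\|_{C^1}$, which is $O(N^{-1/2})$ by \eqref{V_C1}, \eqref{vel_error} and \eqref{choice_vareps}. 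Writing $A(t) = e^{tD}C(t)$ reduces the problem to $C' = e^{-tD}E(t)e^{tD}\,C$; since $D$ has spectrum $\{0,\pm\tfrac12\log N\}$, conjugation inflates $E$ by at most a factor $N^t \le N^T$, and for $T$ small enough this gives $\|C-I\| \lesssim N^{T-1/2}$, hence \eqref{nabla_eta}. For \eqref{new_grad_eta} I set $F(y,t) := \nabla\eta(y,t) - A(t)$ and use
\begin{equation*}
\partial_t F = \nabla v[\phi](\eta,t)\,F + \bigl(\nabla v[\phi](\eta,t) - \nabla v[\phi](0,t)\bigr)\,A(t).
\end{equation*}
The forcing is bounded by $\|\nabla^2 v[\phi]\|_\infty\,|\eta|\,\|A\|$, which by Lemma~\ref{lem_high}, \eqref{where_eta} and $\|A\|\lesssim N^{t/2}$ is of order $\tilde\lambda^{-1}\lambda\,N^{ct}\log N$; since the coefficient $\|\nabla v[\phi]\|_\infty\lesssim\log N$, Gronwall yields $\|F\|_\infty\lesssim\tilde\lambda^{-1}\lambda^c$ after using \eqref{lambda_vs_N} to absorb $N^{ct}$ into a power of $\lambda$.

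For (ii), I would induct on $k$. Differentiating $\partial_t\eta = v[\phi](\eta,t)$ $k$ times produces
\begin{equation*}
\partial_t D^k\eta = \nabla v[\phi](\eta,t)\,D^k\eta + R_k,
\end{equation*}
where $R_k$ is a Fa\`a di Bruno--type polynomial in $D^j\eta$ ($1\le j<k$) and $D^j v[\phi]\circ\eta$ ($2\le j\le k$). The $C^{k,\beta}$ estimates of Lemma~\ref{lem_high}, combined with the inductive control on $D^j\eta$ for $j<k$ and the Gronwall fact \eqref{ode_fact1} driven by $\|\nabla v[\phi]\|_\infty\lesssim\log N$, then deliver the claimed bound $(\lambda N)^{k-1}N^{c_k t}(\log N)^{c_k}$.

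Part (iii) follows by inversion. Since $\nabla y(x,t) = \bigl(\nabla\eta(y(x,t),t)\bigr)^{-1}$, I would expand via the Neumann series around $A^{-1}$: writing $\nabla\eta = A(I + A^{-1}F)$, convergence follows from $\|A^{-1}\|\,\|F\|\lesssim N^{t/2}\,\tilde\lambda^{-1}\lambda^c \ll 1$ for $B$ in \eqref{choice_of_nt_lt} sufficiently large, which yields \eqref{nabla_y}. Higher derivatives come from repeated implicit differentiation of $\eta(y(x,t),t) = x$: $D^k y$ is a Fa\`a di Bruno polynomial in $(\nabla\eta)^{-1}\circ y$ and $D^j\eta\circ y$ ($2\le j\le k$), and chaining the bounds from (i)--(ii) gives \eqref{Ck_y}. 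The main obstacles I anticipate are (a) obtaining the sharp $\exp(O(N^{-1/2}))$ factor in \eqref{nabla_eta}, for which the off-diagonal conjugation blow-up $N^t$ must remain dominated by $N^{-1/2}$ on $[0,T]$, forcing $T$ small; and (b) keeping the combinatorial growth of the Fa\`a di Bruno expansions under control so that the constants $c_k$ in (ii) and (iii) stay finite.
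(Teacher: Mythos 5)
Your proposal is correct and follows essentially the same route as the paper's proof: for (i) you linearize $\partial_t\nabla\eta=\nabla v[\phi](\eta,t)\nabla\eta$ at $y=0$, identify $\nabla\bar v[\bar\phi](0)=\tfrac12\operatorname{diag}(0,1,-1)\log N$, and Gronwall the difference $\nabla\eta-A$ using $\|\nabla^2 v[\phi]\|_\infty\,|\eta|\,\|A\|$ as forcing; for (ii) you transport $D^k\eta$ inductively driven by $\|\nabla v[\phi]\|_\infty\lesssim\log N$; and for (iii) you invert via the Neumann series $\nabla y=(\nabla\eta)^{-1}=A^{-1}\sum_{m\ge 0}(-A^{-1}(\nabla\eta-A))^m$ and differentiate it, exactly as the paper does. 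The only cosmetic difference is that you make explicit the factorization $A(t)=e^{tD}C(t)$ and the $N^t$ conjugation inflation, which the paper leaves implicit in the phrase ``\eqref{nabla_eta} follows from taking the gradient of \eqref{new_eta_def}.''
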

\begin{proof}
We first note that  \eqref{nabla_eta} follows from taking the gradient of \eqref{new_eta_def} and noting that
\[
\nabla v[\tb ] (0,t ) = \nabla \ov [\otb ](0) + O(\| \nabla v[\tb ] - \nabla \ov [\otb ] \|_\infty )= \frac{1}{2}\begin{pmatrix}
0 & 0 & 0 \\
0 & 1 & 0 \\
0 & 0 & -1
\end{pmatrix} \log N  + O\left( N^{-1+ct } ( \log N)^2\right)
\]
for $t\in [0,T]$, where we used \eqref{vtb_C1} in the last step.

As for \eqref{new_grad_eta}, we have 
\[
\p_t (\p_i \eta - Ae_i  ) =  \nabla v[\tb ] (\eta ) \cdot (\p_i \eta - Ae_i ) + \left( \nabla v[\tb ] (\eta ) - \nabla v[\tb ] (0 ) \right)  \cdot (Ae_i ) 
\]
for each $i=1,2,3$, and so, by \eqref{vtb_C1} and \eqref{tb_Ck},
\[
\begin{split}
\frac{\d }{\d t } \| \nabla \eta - A \|_{L^\infty (B_0 )} &\lec\| \nabla v[\tb ] \|_\infty \| \nabla \eta - A \|_{L^\infty (B_0 )}  + \| D^2 v[\tb ] \|_{\infty} \| \eta \|_{L^\infty (B_0 )}  |A|  \\
&\lec \| \nabla \eta - A \|_{L^\infty (B_0 )} \log N  +  \lt^{-1} \lambda^c,
\end{split}
\]
which gives \eqref{new_grad_eta}. 

For \eqref{eta_Ck}, we have, for any $i,j=1,2,3$, 
\[
\p_t \p_{ij} \eta ( y,t ) = \p_m v[\tb ] (\eta , t) \p_{ij} \eta_m +\p_{k m }  v [\tb ] (\eta ,t) \p_i \eta_m \p_j \eta_k 
\]
which gives that
\[
\begin{split}
	\frac{\d }{\d t  } \| D^2 \eta \|_{L^\infty (B_0 )} &\lec  \| \nabla v[\tb ] \|_\infty \| D^2 \eta \|_{L^\infty (B_0 )} +\| D^2 v[\tb ] \|_\infty \| \nabla \eta \|_{L^\infty (B_0 )}^2  \\
	&  \lec \| D^2 \eta \|_{L^\infty (B_0 )}  \log N + N^{ct} \lambda N (\log N )^2
\end{split}
\]
and so, since $D^2 \eta (y,0)=0$,
\[
\| D^2 \eta \|_{L^\infty (B_0 )} \lec  \lambda N^{1+ct } (\log N )^2,
\]
as required. The case of $k\geq 3$ is analogous; we only give details for $k=3$,
\[\begin{split}
\p_t \p_{ijl} \eta ( y,t ) &= \p_m v[\tb ] (\eta , t) \p_{ijl} \eta_m +\p_{mk} v[\tb ] (\eta , t) \p_l \eta_k \p_{ij} \eta_m \\
&+ \p_{k m n }  v [\tb ] (\eta ,t)\p_l \eta_n \p_i \eta_m \p_j \eta_k + \p_{k m  }  v [\tb ] (\eta ,t) \p_{il} \eta_m \p_j \eta_k + \p_{k m  }  v [\tb ] (\eta ,t) \p_{i} \eta_m \p_{jl} \eta_k 
\end{split}
\]
for all $i,j,l\in \{1,2,3\}$, so that
\[
\begin{split}
\frac{\d }{\d t} \| D^3 \eta \|_{L^\infty (B_0 )} &\lec \| \nabla v[\tb ] \|_\infty \| D^3 \eta \|_{L^\infty (B_0 )} + \| D^2 v[\tb ] \|_\infty \| D^2 \eta \|_{L^\infty (B_0 )} \| \nabla \eta \|_{L^\infty (B_0 )}  + \| D^3 v[\tb ] \|_{L^\infty (B_0 )} \|\nabla \eta \|_{L^\infty (B_0 )}^3\\
&\lec \| D^3 \eta \|_{L^\infty (B_0 )} \log N + \lambda^2 N^{2+ct} (\log N )^4,
\end{split}\]
and so \eqref{eta_Ck} follows. 

As for \eqref{nabla_y}, we have
\eqnb\label{temp02}
\nabla \eta^{-1} (\eta(y,t),t) = (\nabla \eta  )^{-1} = (A + ( \nabla \eta - A ) )^{-1} = A^{-1} \sum_{m\geq 0} \left( - A ^{-1} (\nabla \eta - A ) \right)^m
\eqne
and so
\[
| \nabla \eta^{-1} - A^{-1} | \leq \sum_{m\geq 1} | A^{-1} |^m \| \nabla \eta - A \|_\infty^m \leq  \sum_{m\geq 1} N^{2mt} \lt^{-m} \lambda^{cm}  \lec   \lt^{-1} \lambda^{c}   
\]
for every $x\in B_t$, which shows \eqref{nabla_y}as well as the case $k=1$ of \eqref{Ck_y}. 

Cases $k\geq 2$, follow by taking derivatives of \eqref{temp02}, which we now outline only for the case $k=2$. Namely, we take the gradient (with respect to $x$) of \eqref{temp02} to obtain
\[
\begin{split}
\| D^2\eta^{-1} \|_{L^\infty (B_t )} &\lec |A^{-1}| \sum_{m\geq 1} m |A^{-1}|^m \| \nabla \eta -A\|_{L^\infty (B_0 )}^{m-1} \| D^2 \eta \|_{L^\infty (B_0 )} \| \nabla \eta^{-1} \|_{L^\infty (B_t )} \\
&\lec |A^{-1}| \sum_{m\geq 1} m N^{cmt} \lt^{-(m-1)} \lambda^{c(m-1)} \lambda N^{1+ct} (\log N )^2 \\
&\lec \lambda N^{1+ct} (\log N)^2 ,
\end{split}
\]
provided $B>1$ is sufficiently large.
\end{proof}

Thanks to Lemma~\ref{lem_eta} we can obtain some precise information on the regularity of $\otp$, which we state in the following.
\begin{lem}[Estimates on $\otp$]\label{lem_otp}
	We have that  
\eqnb\label{otp_Ck}
\| \otp \|_{C^k }  \lec_k \lt^{\frac32+k-s}\nt^{k-s} N^{c_k t} 
\eqne 
 and
\eqnb\label{otp_Hbeta_norm}
\| \otp \|_{H^\beta } \sim N^{t(1+\beta )/2} (\lt \nt )^{\beta -s}
\eqne
for all  $k\geq 0$, $\beta \in [0,2]$, $t\in [0,T]$. 
\end{lem}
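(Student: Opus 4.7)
The plan is to derive both estimates from the Cauchy formula \eqref{otp}, $\otp(x,t)=\nabla\eta(y,t)\,\otp(y,0)$ with $y=\eta^{-1}(x,t)$, combined with the sharp control of the flow map provided by Lemma~\ref{lem_eta}. The key observation is that on the support $B_{0}$ of $\otp(\cdot,0)$ both $\nabla\eta$ and $\nabla y$ equal, to leading order, the diagonal matrices $A(t)$ and $A(t)^{-1}$ of \eqref{nabla_eta}, with deviations only of size $\lt^{-1}\lambda^{c}$ by \eqref{new_grad_eta} and \eqref{nabla_y}. Since $\lt=\lambda^{B}$ with $B$ large by \eqref{choice_of_nt_lt}, these errors are negligible against every power of $\lt$ or $\nt$ appearing below.

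For the $C^{k}$ estimate I would differentiate the Cauchy formula $k$ times in $x$. Each $\nabla_{x}$ acts via the chain rule either on $\nabla\eta(y,t)$, producing $D^{k+1}\eta$ paired with factors $\nabla_{x}y$, or on $\otp(y,0)$, producing $\nabla_{y}\otp(y,0)\cdot\nabla_{x}y$. Bounding the geometric factors by \eqref{eta_Ck} and \eqref{Ck_y}, and using that a $y_{3}$-derivative of $\otp(y,0)$ gains a factor $\lt\nt$ (from $\sin(\lt\nt y_{3})$) while each $y_{1}$ or $y_{2}$ derivative gains only $\lt$, the dominant term comes from all $k$ derivatives falling on $\sin(\lt\nt y_{3})$ through $\partial_{x_{3}}y_{3}\approx N^{-t/2}$. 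This produces a contribution of size
\[
\lt^{3/2-s}\nt^{-s}\cdot(\lt\nt)^{k}\cdot N^{(k+1)t/2},
\]
the last factor being the product of the $\nabla\eta$ prefactor with $k$ copies of $\partial_{x_{3}}y_{3}$. Since $\lt=\lambda^{B}\gg\lambda$, this dominates any $(\lambda N)^{k-1}$-type contribution from derivatives of $\eta$, and the bound $\|\otp\|_{C^{k}}\lec_{k}\lt^{3/2+k-s}\nt^{k-s}N^{c_{k}t}$ follows.

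For the $H^{\beta}$ estimate I would first treat $\beta\in\{0,1,2\}$ and then interpolate. Since $v[\tb]$ is divergence-free, $|\det\nabla\eta|=1$, so the change of variables $x=\eta(y,t)$ converts $\|\nabla_{x}^{\beta}\otp\|_{L^{2}}^{2}$ into $\int_{B_{0}}|\nabla_{x}^{\beta}\otp(\eta(y,t),t)|^{2}\,dy$. To leading order $\nabla\eta(y,t)\otp(y,0)\simeq N^{t/2}\otp_{2}(y,0)\,e_{2}$, since $\otp_{2}$ is the dominant component of $\otp(\cdot,0)$ in $L^{2}$ by \eqref{otp_aniso} and the second diagonal entry of $A$ equals $N^{t/2}$; then each subsequent $\nabla_{x}$ that differentiates $\otp_{2}(y,0)$ uses $\partial_{y_{3}}\cdot\partial_{x_{3}}y_{3}\sim\lt\nt\cdot N^{t/2}$, producing $\beta$ such factors. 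Squaring and integrating, the oscillatory factor $\sin^{2}$ or $\cos^{2}(\lt\nt y_{3})$ averages to $1/2$ on the scale $\lt^{-1}\gg(\lt\nt)^{-1}$, while $\int g(\lt y)^{2}\,dy\sim\lt^{-3}$, giving
\[
\|\nabla^{\beta}\otp\|_{L^{2}}^{2}\sim N^{t(1+\beta)}(\lt\nt)^{2\beta-2s},
\]
as required for both upper and lower bounds. The estimate for non-integer $\beta\in[0,2]$ then follows by Sobolev interpolation.

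The main obstacle is justifying the lower bound in $H^{\beta}$: one must show that every subleading contribution --- the $\otp_{1}$ component, which is a factor $\nt^{-1}$ smaller in $L^{2}$ by \eqref{otp_aniso}; the corrections $\nabla\eta-A$ and $\nabla y-A^{-1}$; and the higher derivatives $D^{k}\eta$, $D^{k}y$ with $k\geq 2$ --- cannot disturb the leading asymptotic. Each of these is smaller by a factor $\lt^{-1}\lambda^{c}$ or $\nt^{-1}$ times a harmless power of $\log N$, and by the choice \eqref{choice_of_nt_lt} with $B$ taken sufficiently large, any positive power of $\lambda$ is absorbed into any negative power of $\lt$, so these corrections become genuinely negligible.
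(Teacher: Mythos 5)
Your proposal is correct and follows essentially the same route as the paper: plug the Cauchy formula into the norms, exploit the diagonal leading-order structure of $A(t)$ and $A(t)^{-1}$ together with the error bounds of Lemma~\ref{lem_eta}, observe that the $\otp_2$ component and the $\partial_{x_3}$-derivatives dominate, and close by Sobolev interpolation. One small slip to fix: since the flow squeezes in $x_3$, we have $\partial_{x_3} y_3 \approx (A^{-1})_{33} \approx N^{t/2}$, not $N^{-t/2}$ as you wrote (your final exponent $N^{(k+1)t/2}$ already uses the correct value, so the displayed bound is right).
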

We note that the main difficulty of the lemma is to ensure the same constant $2^{-1}$ in the power of $N^{2^{-1}(1+\beta )t}$ in both $\lec $ and $\gec$ inequalities.

\begin{proof} 
Recalling the Cauchy formula \eqref{otp}, we see that 
\eqnb\label{temp03}
\begin{split}
\| \otp \|_{C^1} &\lec \lt^{\frac32  -s} \nt^{-s} \left(\| \nabla \eta \|_{L^\infty (B_0 )} \lt \nt \| \nabla y \|_{L^\infty (B_t )}   +\| D^2 \eta \|_{L^\infty (B_0 )}\| \nabla y \|_{L^\infty (B_t )} \right) \\
&\lec \lt^{\frac52  -s} \nt^{1-s} N^{ct}
\end{split}
\eqne
for $t\in [0,T]$, which show the case $k=1$ of \eqref{otp_Ck}. Cases $k\geq 2$ follow using  a similar computation, by taking $k$ derivatives of the Cauchy formula \eqref{otp}. As we saw in the last inequality above, the leading order term of $\| \otp \|_{C^k}$ corresponds to taking $k$ derivatives of $\otp$. Indeed, each such derivative gives a factor of $\lt\nt$, as well as a factor of $\| \nabla y \|_\infty \sim N^{t/2}$.

 For \eqref{otp_Hbeta_norm}, we first observe that 
\[
\| \otp \|_2^2 = \int \left| \nabla \eta (\eta^{-1}(x,t),t) \otp (\eta^{-1}(x,t),0) \right|^2 \d x = N^{t/2} \int |\otp_2 (y,0) |^2 \d y + L.O.T. \sim (\lt \nt )^{-s} N^{t/2},
\] 
where ``$L.O.T.$'' stands for ``lower order terms'' and where, in the second equality, we recalled that the form \eqref{nabla_eta} of $A(t)$ shows that the largest entry of $A(t)$ is $A_{22}(t) \sim N^{t/2}$. We also recalled  \eqref{new_grad_eta} that $\p_2 \eta_2 \sim A_{22}$, up to an error of the (much smaller) size $\lt^{-1}\lambda^c$, and we recalled \eqref{otp_aniso} to observe that $\otp_2 (\cdot ,0)$ is the dominant coordinate of $\otp (\cdot ,0)$. 

As for $\nabla \otp$ we note that, as in \eqref{temp03}, the leading order term of $\| \nabla \otp\|_2$ corresponds to the partial derivative $\p_3$ falling on $\otp_2$. Indeed, we have
\[
\begin{split}
\| \p_3 \otp_2 \|_2^2 &= A_{22}^2 \int | \p_3 \otp_2 (\eta^{-1}(x,t),0) \p_3 \eta^{-1}_3 (x,t) |^2 \d x + L.O.T.  \\
&= C \left( A_{22} (A^{-1})_{33} (\lt \nt )^{1-s}\right)^2+ L.O.T.\\
&\sim N^{2 t} (\lt \nt )^{2-2s},
\end{split}
\]
where we used \eqref{new_grad_eta} in the first line and \eqref{nabla_y} in the second line.

Similarly, every other partial derivative of any component of $\otp$ has the $L^2$ norm at least $\min (\nt,\lt )$ times smaller, so that $\| \nabla \otp \|_{2} \sim N^{2t} (\lt \nt )^{1-s}$ for all $t\in [0,T]$. An analogous argument for second order derivatives shows that the leading order term of $D^2 \otp$ is $ \p_{33} \otp_2 \sim A_{22} \p_3 \otp_2(\eta^{-1}(\cdot , t),0) (A^{-1})_{33}^2+L.O.T. $, and so, to be precise 
\[
\| D^2 \otp \|_{2} \sim \| \p_{33} \otp_2 \|_2 + L.O.T. \sim N^{3t/2} (\lt \nt )^{2-s}. 
\] 
Sobolev interpolation completes the proof of \eqref{otp_Hbeta_norm}.
\end{proof}
We note,  \eqref{otp_Ck} and the velocity estimate \eqref{vel_est_general} give
\eqnb\label{na_votp_Ck}
\| \nabla v[\otp ] \|_{C^k} \lec_k \| \otp \|_{C^k} \log N \lec_{k,a,B} \lt^{\frac32+k-s} \nt^{k-s} N^{c_kt} \log N
\eqne
for all $t\in [0,T]$, $k\geq 0$.
In the following we will also need some precise lowest order estimates  on $v[\otp ]$. Recall that we can easily obtain good estimates for $v[\otp ]$ in $C^k$, simply by using \eqref{vel_est_general}. However, such estimates are only valid for $k\geq 1$. 

As for $k=0$ a direct use of the Biot-Savart law \eqref{bs} only gives that 
\[
\| v [\otp ] \|_\infty \lec \lt^{-1} \| \otp \|_\infty \lec \lt^{\frac12 -s} \nt^{-s} N^t.
\]
In the following lemma we show that this can be improved by almost a factor of $\nt^{-1}$, and we also provide an $L^2$ estimate.
\begin{lem}[$L^\infty$ and $L^2$ control of  $v{[}\otp{]}$]\label{lem_votp}
If $B>1$ is sufficiently large then 
 \eqnb\label{votp_est}
	\begin{split}
		 \| v [\otp ] \|_\infty  \lec_a \lt^{\frac12 -s} \nt^{-1-s} N^{ct} \log N ,
	\end{split}
\eqne
\begin{equation}\label{votp-est-L2}
		\begin{split}
			\norm{ v[\otp] }_{L^{2}} \lesssim (\lt \nt)^{-1-s} N^{ct}\log N
		\end{split}
	\end{equation}
	for $t\in [0,T]$.
\end{lem}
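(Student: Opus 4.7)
The plan is to transform the Biot--Savart integral into an oscillatory integral supported on a fixed domain, and then to integrate by parts once in $z_3$ using $\sin(\lt\nt z_3) = -(\lt\nt)^{-1}\p_{z_3}\cos(\lt\nt z_3)$ in order to harvest the desired factor $\nt^{-1}$. Substituting $y = \eta(z,t)$ in the Biot--Savart formula and using $\det\nabla\eta(z,t)=1$ together with the Cauchy formula \eqref{otp} gives
\[
v[\otp](x) = -\frac{\lt^{\frac32-s}\nt^{-s}}{4\pi K}\int \frac{(x-\eta(z,t))\times \nabla\eta(z,t)\,G(\lt z)}{|x-\eta(z,t)|^3}\sin(\lt\nt z_3)\,\d z,
\]
where $G$ is the smooth, compactly supported vector field from \eqref{otp-def}. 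Since the kernel is singular where $\eta(z,t)=x$, I introduce a smooth cutoff $\chi\in C_c^\infty(\R^3;[0,1])$ equal to $1$ on $B(1)$ and vanishing off $B(2)$, and decompose $v[\otp] = v_{\mathrm{in}} + v_{\mathrm{out}}$ at a scale $a\coloneqq (\lt\nt)^{-1}$ by inserting the factors $\chi((x-\eta(z,t))/a)$ and $1-\chi((x-\eta(z,t))/a)$ respectively.

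For the $L^\infty$ bound \eqref{votp_est}, the inner piece is handled directly using $\int_{|y|<2a}|y|^{-2}\d y\lec a$, $\|\nabla \eta\|_\infty\lec N^{ct}$ from \eqref{nabla_eta}--\eqref{new_grad_eta}, and $\|G\|_\infty\lec 1$, yielding $|v_{\mathrm{in}}(x)|\lec \lt^{\frac32-s}\nt^{-s}N^{ct}a = \lt^{\frac12-s}\nt^{-1-s}N^{ct}$. For $v_{\mathrm{out}}$ I integrate by parts in $z_3$ (boundary terms vanish since $G(\lt\cdot)$ has compact support in $z$); the $\p_{z_3}$ derivative lands on the cutoff, the kernel $(x-\eta)/|x-\eta|^3$, the matrix $\nabla\eta(z,t)$, or the factor $G(\lt z)$. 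The cutoff and kernel contributions each produce $\lec \|\nabla\eta\|_\infty^2\log(R_0/a)\lec N^{ct}\log N$ after integrating $|x-\eta|^{-3}$ over the annulus $a\lec|x-\eta|\lec R_0\coloneqq \lt^{-1}N^{\wht{c}t}$. The $\nabla\eta$ derivative produces $\|D^2\eta\|_\infty\cdot \lt^{-1}N^{ct}\lec \lambda\lt^{-1}N^{1+ct}(\log N)^c$ via \eqref{eta_Ck}, absorbed into $N^{ct}\log N$ once $B$ in \eqref{choice_of_nt_lt} is chosen large enough that $\lambda N\ll \lt$ (this is compatible with \eqref{restr1}). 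The $G(\lt z)$ derivative gives $\lt\cdot\lt^{-1}N^{ct}= N^{ct}$. Multiplying all contributions by the IBP prefactor $\lt^{\frac32-s}\nt^{-s}(\lt\nt)^{-1} = \lt^{\frac12-s}\nt^{-1-s}$ yields \eqref{votp_est}.

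For the $L^2$ bound \eqref{votp-est-L2}, I split $\R^3 = \{|x|\leq 2R_0\}\cup\{|x|>2R_0\}$. On the near set, combining \eqref{votp_est} with $|B(0,2R_0)|\lec R_0^3 = \lt^{-3}N^{\wht{c}t}$ gives $\|v[\otp]\|_{L^2(|x|\leq 2R_0)}\lec R_0^{3/2}\|v[\otp]\|_\infty\lec (\lt\nt)^{-1-s}N^{ct}\log N$. On the far set, $|x-\eta(z,t)|\gtrsim |x|$ uniformly in $z\in\supp G(\lt\cdot)$, so the kernel is smooth in $z$ and I may integrate by parts in $z_3$ without a cutoff. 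The analogous four contributions yield the pointwise bound $|v[\otp](x)|\lec |x|^{-2}\lt^{-\frac32-s}\nt^{-1-s}N^{ct}+|x|^{-3}\lt^{-\frac52-s}\nt^{-1-s}N^{2ct}$, and $\int_{|x|>R_0}|x|^{-4}\d x\lec R_0^{-1}$ together with $\int_{|x|>R_0}|x|^{-6}\d x\lec R_0^{-3}$ both give squared contributions of size $(\lt\nt)^{-2-2s}N^{ct}$, producing the required bound. The main obstacle is purely organizational: since only one IBP is performed, the polynomial factors of $\lambda$ and $N$ arising from $\|D^2\eta\|_\infty$ and $\lt\|\nabla G\|_\infty$ are not automatically dominated by the single $\nt^{-1}$ gain, and one must exploit the freedom of choosing $B$ large so that these spurious powers are swallowed by the leading $N^{ct}\log N$.
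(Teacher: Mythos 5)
Your proposal is correct and follows essentially the same route as the paper: Lagrangian change of variables via the Cauchy formula \eqref{otp}, a cutoff at scale $(\lt\nt)^{-1}$ around the singularity, direct estimation of the inner piece, one integration by parts in the Lagrangian $z_3$ variable on the outer piece (tracking the four derivative contributions), and then for the $L^2$ bound a near/far split with the near part controlled by the $L^\infty$ bound and the far part by the oscillatory pointwise decay $\lec|x|^{-2}$. Your observation that the $D^2\eta$ contribution must be swallowed by taking $B$ large so that $\lambda N\ll\lt$ is exactly the condition the paper enforces (compare \eqref{choice_cstar}), so this is not a gap but a correct identification of where the hypothesis "$B>1$ sufficiently large" enters.
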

\begin{proof}
For \eqref{votp_est} we let 
\[
\delta \coloneqq \lt^{-(1+a)} = \lt^{-1} \nt^{-1} \ll \lt^{-1},
\]
and we set  $\chi \in C_c^\infty (\R^3)$ be such that $\chi =1$ on $B(0,\delta)$, $\chi=0$ on $B(0,2\delta)^c$ and $|\nabla \chi |\lec \delta^{-1}$. We have
\eqnb\label{temp04}
\begin{split}
| v_i[\otp ] (z) | &= \left| \frac{1}{4\pi } \int_{\R^3}  \epsilon_{ijk} \frac{z_j-x_j}{|z-x|^3}   \otp_k (x)  \d x  \right| \\
 &= \left| \frac{1}{4\pi } \int_{\R^3}  \epsilon_{ijk} \frac{z_j-x_j}{|z-x|^3}   \p_l \eta_k (y(x) )\otp_l (y(x),0)  \d x  \right| \\
 &= \left| \frac{1}{4\pi } \int_{\R^3}  \epsilon_{ijk} \frac{z_j-\eta_j(y)}{|z-\eta(y)|^3}   \p_l \eta_k (y )\otp_l (y,0)  \d y  \right| \\
 &\lec \| \otp (\cdot , 0) \|_\infty \int_{\R^3}   \frac{\nabla \eta (y) \chi (z-\eta(y) )}{|z-\eta(y)|^2}    \d y  +\left| \int_{\R^3}  \epsilon_{ijk} \frac{z_j-\eta_j(y)}{|z-\eta(y)|^3}   \p_l \eta_k (y )\otp_l (y,0) (1-\chi (z-\eta(y) )) \d y   \right| \\
 &\lec  \lt^{\frac32-s }\nt^{-s}N^{ct}  \int_{B (0,2\delta ) } | x|^{-2} \d x \\
 &+ \lt^{\frac12-s} \nt^{-1-s} \int_{ B_t } \left( \left( \frac{|\nabla \eta (y) |^2}{|z-\eta(y) |^{3}}  + \frac{|D^2 \eta (y) |}{|z-\eta (y)|^{2}}  +\frac{|\nabla \eta (y)| \lt  \|\nabla g \|_\infty}{|z-\eta(y) |^{2}}  \right) (1-\chi(z-\eta (y))) \right. \\
 &\hspace{9cm}\left. + \frac{|\nabla \eta(y) |^2 |\nabla \chi (z- \eta(y)  )|}{|z-\eta (y) |^{2}}  \right) \d y\\
 &\lec \lt^{\frac32-s}\nt^{-s}  \left( N^{ct} \delta + \lt^{-1}\nt^{-1}N^{ct} \log \delta^{-1}+\lt^{-2}\nt^{-1}\lambda N^{1+ct} \log N  + \lt^{-1}\nt^{-1} N^{ct}+ \lt^{-1} \nt^{-1} N^{ct} \right)\\
 &\lec_a \lt^{\frac12 -s }\nt^{-1-s} N^{ct} \log N
 \end{split}
\eqne
for each $i\in \{ 1,2,3\}$ and every $z\in \R^3$, where we used the fact that $\mathrm{det}\, \nabla \eta =0$, due to incompressibility, and, in the second inequality, we integrated by parts in $y_3$ in the term involving $(1-\chi)$ (by integrating $\otp_l$).  We also used \eqref{eta_Ck} in the third inequality. (Recall also \eqref{supp_otp} that $B_t = B(\lt^{-1} N^{\widetilde{c}t})$.) This shows \eqref{votp_est}.

As for \eqref{votp-est-L2}, the local part can be estimated using \eqref{votp_est} directly,
\[
\| v[\otp ] \|_{L^2 (B(2 N^{\widetilde{c}t} \lt^{-1} ))} \lec \lt^{-\frac32 }N^{ct} \| v[\otp ] \|_{L^\infty } \lec (\lt \nt )^{-1 -s } N^{ct} \log N.
\]
Moreover, for $z\in B(2 N^{\widetilde{c}t} \lt^{-1} )^c$, we can use the first three equalities of \eqref{temp04} and integrate by parts in $y_3$ to obtain
\[
\begin{split}
| v_i[\otp ] (z) |  &=c \lt^{\frac12-s} \nt^{-1-s}\left|\epsilon_{ijk} \int \p_{y_3} \left( \frac{z_j-\eta_j(y)}{|z-\eta(y)|^3}   \p_l \eta_k (y ) G (\lt y ) \right)  \cos (\lt \nt y_3 ) \d y \right|\\
&\lec \lt^{\frac12-s} \nt^{-1-s} \int_{B_t } \left( |z- \eta (y) |^{-3} |\nabla \eta (y) |^2 +|z- \eta (y) |^{-2} |D^2 \eta (y) | +|z- \eta (y) |^{-2} |\nabla \eta (y) | \lt \right) \d y \\
&\lec \lt^{\frac12-s} \nt^{-1-s} \left(\lt^{-3}N^{ct} |z|^{-3} + \lt^{-2}  |z|^{-2} \right)  \\
&\lec \lt^{-\frac32 -s }\nt^{-1-s} N^{ct}  |z|^{-2}
 \end{split}
\]
for each $i=1,2,3$, so that
\[
\| v[\otp ] \|_{L^2(B(2 N^{\widetilde{c}t} \lt^{-1} )^c)}^2  \lec \lt^{-3 -2s }\nt^{-2-2s} N^{ct} \int_{\{ |z| \geq 2 N^{\widetilde{c}t} \lt^{-1} \} } |z|^{-4} \d z \lec \lt^{-2 -2s }\nt^{-2-2s} N^{ct} ,
\]
as required.
\end{proof}

\subsubsection{Control of the perturbation error}
We now define $\tp$ to be such that $\tp+\tb$ solves the $3$D Euler equation \eqref{3d_euler_vorticity} with initial data $\otp(x,0) + \otb (x)$, i.e.
\[
\p_t (\tp +\tb ) +v[\tp + \tb ] \cdot \nabla (\tp +\tb ) = (\tp +\tb ) \cdot \nabla v[\tp + \tb ].
\]
We define  
\[
\ttp \coloneqq \tp - \otp, \qquad  W \coloneqq v[\ttp ] =v[\tp ] - v [\otp ] 
\]
as the perturbation error and the velocity error for the perturbation, respectively. We now fix $a >1$ to be any number such that  
\eqnb\label{choice_a}
\frac32 -(1+a) s <0,
\eqne
namely that $a>(3-2s)/s$. In particular, note that
\[
\nt \geq \lt.
\]
In the error estimates below we will often use the prefactor of 
\[
\lt^{\frac32 -s } \nt^{-s},
\]
which is the leading order size of $\| \otp \|_\infty$, due to \eqref{otp_Ck}. In particular \eqref{choice_a} means that $\| \otp\|_\infty$ can be bounded by some negative power of $\lt$, and so, if $B>1$ is large enough, this gives us a smallness that overcomes any growth we may accumulate from any power of $\lambda$ (arising from the terms involving the background vorticity $\tb$). We also emphasize that this aspect of the error analysis of the perturbation vorticity $\tp$ differs from the analysis of the background vorticity (Proposition~\ref{prop_bckgrd_err}), in which case every term  involving  $\| \tb \|_\infty \sim \log N$ had to be handled more carefully. \\

 We also fix $\gamma \in (0,1)$ sufficiently small so that
\eqnb\label{choice_of_gamma}
\frac32 +(1+a)(\gamma - s ) <0\qquad \text{ and } \qquad (2+a)\gamma <1  <0 ,
\eqne
so that, apart from being able to estimate $\| \tp \|_\infty$, we also control $\| \tp \|_{C^\gamma}$ (while the second property in \eqref{choice_of_gamma} in used in Step 3 below. In particular, recalling \eqref{otp_Ck} again, we have that 
\eqnb\label{otp_Cgamma_shrinks}
\| \otp \|_{C^\gamma }  \log N \to 0 
\eqne
as $\lambda \to \infty$,  uniformly on $[0,T]$, if $B>1$ is sufficiently large. We now restrict our attention  to the choices of $B>1$ sufficiently large so that there exists   $c_*>0$ such that
\eqnb\label{choice_cstar}
\lambda N \lt^{-1} + \lt^{\frac32 -s} \nt^{-s} \leq  \lambda^{-c_*}  .
\eqne

\begin{prop}[Perturbation error]\label{prop_pert_error}
There exists  $\overline{c} >0$, such that
\eqnb\label{W_and_ttp_prop}
\begin{split}
\| \ttp \|_\infty + (\lt \nt)\| W \|_\infty  &\lec  \lt^{\frac32-s} \nt^{-s}  \lmb^{-c_*}  N^{ \overline{c} t} (\log N)^{3} , \\
  (\lt \nt)^{-\gmm}\| \ttp \|_{C^\gamma} &\lec \lt^{\frac32-s} \nt^{-s}  \lmb^{-c_{*}}  N^{\overline{c} t} (\log N)^{3},\\
\| \ttp \|_{H^{\beta} } &\lec  (\lt\nt)^{\bt-s} \lmb^{ -c_* }   N^{\overline{c}t} (\log N)^2
\end{split}
\eqne
for all $t\in [0,T]$, $\beta \in [0,s]$ and all sufficiently large $\lambda$, if  $B>1 $ is sufficiently large. 
\end{prop}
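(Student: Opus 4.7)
The plan is to prove \eqref{W_and_ttp_prop} via a Gronwall/bootstrap argument that tracks the three coupled quantities $\|\ttp\|_\infty$, $(\lt\nt)\|W\|_\infty$ and $(\lt\nt)^{-\gamma}\|\ttp\|_{C^\gamma}$ simultaneously, in close analogy with the background argument of Proposition~\ref{prop_bckgrd_err}. Subtracting the background equation for $\tb$ from the full Euler equation for $\tb+\tp$, then subtracting the pseudo-equation \eqref{eq_otp}, produces an evolution equation of the schematic form
\[
\p_t \ttp + v[\tb+\tp]\cdot\nabla\ttp = I + II + III_1 + III_2 + III_3 - v[\tp]\cdot\nabla\tb,
\]
where $I=-v[\tp]\cdot\nabla\otp$, $II=\ttp\cdot\nabla v[\tb]$, $III_1=\otp\cdot\nabla v[\otp]$, $III_3=\tb\cdot\nabla v[\otp]$, and $III_2$ collects the remaining $\ttp$- and $W$-dependent cross-terms. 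The decomposition $v[\tp]=v[\otp]+W$ will be used throughout.

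The fundamental mechanism is that the prefactor $\lt^{3/2-s}\nt^{-s}$ (governing $\|\otp\|_\infty$ via Lemma~\ref{lem_otp}) is a \emph{negative} power of $\lt$ by \eqref{choice_a}; consequently, once $B$ is chosen large enough for \eqref{choice_cstar}, every self-interaction contribution in $III_1$ and every occurrence of a positive power of $\lambda$ or $\log N$ brought in by background estimates is defeated by a factor of $\lambda^{-c_*}$. The term $II$ yields only a $\log N$ factor by Lemma~\ref{lem_high}, producing the Gronwall exponent $N^{\overline{c}t}$. The only subtle term is $III_3=\tb\cdot\nabla v[\otp]$: on $\supp\,\otp\subset B_t=B(0,\lt^{-1}N^{\widetilde c t})$, the symmetry \eqref{tb_symmetry} gives $\tb(0,t)=0$, and the Lipschitz bound $\|\nabla\tb\|_\infty\lec\lambda N\log N$ from \eqref{tb_Ck} yields $\|\tb\|_{L^\infty(\supp\,\otp)}\lec \lambda^{1-B}N\,(\log N)N^{ct}$, which by \eqref{choice_cstar} is smaller than $\lambda^{-c_*}$; off $\supp\,\otp$ one integrates by parts once in $y_3$ against $\sin(\lt\nt y_3)$ in the Biot--Savart integral for $v[\otp]$ (the Riemann--Lebesgue trick already used in Lemma~\ref{lem_votp}) to gain an extra $\nt^{-1}$.

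To close the coupling at the velocity level, the equation for $W=v[\ttp]$ is derived from the velocity formulation with the pressure $q$ satisfying an elliptic equation controlled via \eqref{Lapl_inv}, just as in the proof of \eqref{V_and_ttb} starting from \eqref{V_eq}. The key quantitative point is that every velocity-level estimate gains an additional factor of $\lt\nt$ over its vorticity counterpart (one $\lt^{-1}$ from Biot--Savart scaling and one $\nt^{-1}$ from the integration by parts), so the combined quantity $\|\ttp\|_\infty + \lt\nt\|W\|_\infty$ satisfies
\[
\frac{\d}{\d t}\bigl(\|\ttp\|_\infty + \lt\nt\|W\|_\infty\bigr) \lec (\log N)\bigl(\|\ttp\|_\infty + \lt\nt\|W\|_\infty\bigr) + \lt^{\frac32-s}\nt^{-s}\lambda^{-c_*}(\log N)^3,
\]
whose Gronwall closure, together with vanishing initial data, gives the first line of \eqref{W_and_ttp_prop}. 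The $C^\gamma$ estimate proceeds in parallel using the $C^\gamma$-boundedness of Riesz transforms, H\"older interpolation, and the smallness \eqref{otp_Cgamma_shrinks}; a continuity argument then upgrades the a~priori bootstrap assumptions $\|\ttp\|_{C^\gamma}\leq(\lt\nt)^\gamma$, $\|W\|_\infty\leq(\lt\nt)^{-1}$ to all of $[0,T]$.

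Finally, the $H^\beta$ bound for $\beta\in[0,s]$ is obtained by an $L^2$ energy estimate on the $\ttp$-equation using the control $\|\nabla v[\tb]\|_\infty\lec \log N$ from Lemma~\ref{lem_high}, the $L^2$ bound on $\otp$ from Lemma~\ref{lem_otp}, and the sharper $L^2$ bound \eqref{votp-est-L2} on $v[\otp]$, followed by interpolation with the $C^\gamma$ estimate. The main obstacle throughout is the tight coupling between vorticity and velocity formulations: a plain Biot--Savart bound on $\|W\|_\infty$ loses the crucial $\nt^{-1}$ gain, so the Riemann--Lebesgue integration by parts in $x_3$, enabled by our choice to let $\otp$ oscillate only in $x_3$, must be propagated through every term in which $W$ or $v[\otp]$ appears.
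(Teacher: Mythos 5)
Your proposal mirrors the paper's proof very closely in its first three steps: the same decomposition of the $\ttp$-equation into $I,II,III_{1\text{--}4}$, the same identification of $III_3=\tb\cdot\nabla v[\otp]$ as the delicate term handled by combining the odd-symmetry vanishing $\tb(0,t)=0$ with a Lipschitz bound on the inner region and the Riemann--Lebesgue integration by parts in $y_3$ on the outer region, the same observation that the velocity-level estimate on $W$ gains the full factor $\lt\nt$ (one $\lt^{-1}$ from Biot--Savart scaling, one $\nt^{-1}$ from the oscillation), and the same Gronwall closure of the combined quantity $\|\ttp\|_\infty+\lt\nt\|W\|_\infty$ together with a continuity/bootstrap argument. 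The $C^\gamma$ step is also the same (flow-map formulation, H\"older interpolation, $\|\otp\|_{C^\gamma}\to 0$ from \eqref{otp_Cgamma_shrinks}).

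The genuine gap is in your final step, the $H^\beta$ estimate. You propose an $L^2$ energy estimate ``followed by interpolation with the $C^\gamma$ estimate.'' Interpolating an $L^2$ bound against a $C^\gamma$ bound only controls $\dot H^\beta$ for $\beta<\gamma$ (and even that requires care about compact support), but $\gamma$ is chosen extremely small via \eqref{choice_of_gamma}, while the claim requires $\beta$ all the way up to $s$, which can be close to $3/2$. This interpolation cannot reach the range $\beta\in[0,s]$ asserted in \eqref{W_and_ttp_prop}. What the paper actually does is run $L^2$, $H^1$, $H^2$ energy estimates on the $W$-equation \eqref{eq_for_W} (where the pressure $\nabla Q$ drops after integration by parts), giving $\|W\|_{H^{k}}$ for $k=0,1,2$, and then uses Sobolev interpolation between integer-order norms; since $\curl W=\ttp$, the estimate on $\|W\|_{H^{1+\beta}}$ yields $\|\ttp\|_{H^\beta}$ for all $\beta\in[0,s]$. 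Your variant of doing the energy estimate directly on the $\ttp$-equation rather than the $W$-equation is harmless at the $L^2$ level (the transport term still drops), but to obtain $H^\beta$ for the stated range you must supply the $H^1$ and $H^2$ analogues and interpolate between those, not between $L^2$ and $C^\gamma$.
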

\begin{rem}[The estimates in~\eqref{W_and_ttp_prop}]
We note that the main point of the proposition above is that  $c_* >0$  (which is possible only if  $B>1$ is sufficiently large), so that the estimates \eqref{W_and_ttp_prop} are negligible as compared to the $L^\infty$, $C^\gamma$, $H^\beta$ estimates \eqref{otp_Ck}--\eqref{otp_Hbeta_norm} on $ \otp$. Similarly to Proposition~\ref{prop_bckgrd_err} on the background vorticity error estimates, the first two estimates in \eqref{W_and_ttp_prop} let us control $\| \ttp \|_{H^\beta}$, which is needed for the $H^\beta$ part of the claim \eqref{black_box_claims} of Theorem~\ref{thm_black_box}.
\end{rem}
\begin{proof}[Proof of Proposition~\ref{prop_pert_error}.]
We first let $T_0\in (0,T] $ (where $T$ is given by background vorticity error, recall~\eqref{choice_vareps}) be the largest time such that  $\| \ttp \|_{C^\gamma}, \| W \|_\infty \leq 1$ on $[0,T_0]$. We perform estimates for $t\leq T_0$.

\medskip 
\noindent \textbf{Step 1. Estimate of $\| \ttp \|_\infty$.} We see that
\eqnb\label{eq_ttp}
	\begin{split} 
		\p_t \ttp &= - v[\tb ] \cdot \nabla \ttp - v[\tp ] \cdot \nabla (\tp +\tb ) + \ttp \cdot \nabla v[\tb ]   + (\tp +\tb )\cdot \nabla v[\tp ] \\
		&= 
		- (v[\tb ] + v[\psi] )\cdot \nabla \ttp 
		- v[\tp]\cdot( \nabla \otp + \nabla \tb  ) 
		+ \ttp \cdot \nabla v[\tb ] 
		+ (\tp +\tb )\cdot \nabla v[\tp ]
\end{split}
\eqne
so that moving over the transport term, we obtain \begin{equation}\label{ttp_eq}
	\begin{split}
		\left( \partial_t + (v[\phi] + v[\psi]) \cdot \nabla \right) \ttp = I + II + III ,
	\end{split}
\end{equation} where
 \begin{equation*}
	\begin{split}
		I \coloneqq -v[\tp]\cdot( \nabla \otp + \nabla \tb  ) , \quad II \coloneqq  \ttp \cdot \nabla v[\tb ] , \quad  III = (\tp +\tb )\cdot \nabla v[\tp ]
	\end{split}
\end{equation*} and further rewrite \begin{equation*}
	\begin{split}
		III &\coloneqq \tp\cdot \nabla v[\tp ] + \tb\cdot \nabla v[\tp ]  = \tp \cdot \nabla v[ \otp ] + \tp \cdot \nabla v[ \ttp] +  \tb \cdot \nabla v[ \otp ] + \tb \cdot \nabla v[ \ttp] \\
		&=: III_1 + III_2 + III_3 + III_4. 
	\end{split}
\end{equation*} 
In order to estimate $\|\ttp \|_\infty$ we first note that one of the main term in that estimate will be
\eqnb\label{III3}
\| III_{3} \|_\infty = \| \tb \cdot  \nabla v[\otp ] \|_\infty \leq  \lt^{\frac32-s} \nt^{-s} N^{ct}\log N 
\eqne
where we used \eqref{tb_linfty_Cbeta}, as well as the velocity estimate \eqref{vel_est_general} and we recalled \eqref{otp_Cgamma_shrinks} that $\| \otp \|_{C^\gamma } \lec 1$. However, the last upper bound is too na\"ive estimate, and in fact insufficient, as it is larger than our desired bound in \eqref{W_and_ttp_prop}. 

We now make it almost $\lt^{-1}$ sharper by showing the following. \\

\textbf{Claim:}	For all $t\in [0,T]$,
	\eqnb\label{tb_na_v_otp}
	\| \tb \cdot \nabla v[\otp ] \|_\infty \lec \lt^{\frac32-s} \nt^{-s} N^{ct} (\log N)^2 \left( \lt^{-1 } \lambda N +\nt^{-1}  \right) 
	\eqne
\begin{proof}[Proof of Claim.]
	Indeed, recalling the symmetry \eqref{tb_symmetry} we note that  $\tb =0$ for $x=0$, and so \eqref{tb_Ck} gives
	\eqnb
	\begin{split}
		|\tb (x) | &\lec \|  \tb \|_{C^1} |x| \lec \lambda N^{1+ct} \log N |x| .
	\end{split}
	\eqne
	Thus
	\[
	\| \tb \cdot \nabla v[\otp ] \|_{L^\infty (B(2\lt^{-1} N^{\widetilde{c}t}))} \lec \lambda N^{1+ct}  \log N (\lt^{-1} N^{\widetilde{c}t }) \, \| \nabla v[\otp ]\|_\infty \lec_{a} \lt^{\frac12-s}\nt^{-s} \lambda N^{1+ct}  (\log N)^2.
	\]
	As for $|x| \geq  2\lt^{-1}N^{2t}$ we see that, similarly to the proof of Lemma~\ref{lem_votp}, 
	\eqnb\label{votp_away}
	\begin{split}
		|\p_m v_i [\otp ] (z) | &=C \left| \int \p_m  \frac{(z-x)}{|z-x|^3} \times  \otp_i (x)\d x\right|=C  \left|\epsilon_{ijk} \int \p_m \frac{ (z_j-x_j)  }{|z-x|^3} \p_l \eta_k (y(x) ) \otp_l (y(x),0) \d x\right|\\
		&=C  \left|\epsilon_{ijk} \int \p_{\eta_m} \frac{ (z_j-\eta(y)_j)  }{|z-\eta(y)|^3} \p_l \eta_k (y ) \otp_l (y,0) \d y\right|\\
		&\lec \lt^{\frac12-s}\nt^{-1-s} \int_{B_t } \left( \frac{|\nabla \eta (y) |^2}{|z-\eta(y)|^4} +\frac{|D^2 \eta (y) |}{|z-\eta(y)|^3} + \frac{\lt |\nabla \eta (y) |}{|z-\eta(y)|^3}  \right) \d y\\
		&\lec \lt^{\frac12-s}\nt^{-1-s} \left( N^{ct}(\lt^{-1}N^{\widetilde{c}t})^{-4} + \lambda N^{1+ct } \log N  (\lt^{-1}N^{\widetilde{c}t})^{-3} + \lt N^{ct} (\lt^{-1}N^{\widetilde{c}t})^{-3} \right) |B_t |\\
		&\lec \lt^{\frac32-s}\nt^{-1-s} \lambda N^{1+ct} \log N 
	\end{split}
	\eqne
	for $m,i=1,2,3$, where we integrated by parts in $y_3$, and used \eqref{new_grad_eta}, \eqref{eta_Ck} in the fourth line. Recalling that $\| \tb \|_\infty \lec \log N$ completes the proof of Claim.
\end{proof}

Having verified \eqref{tb_na_v_otp}, we can now estimate $\| \ttp \|_\infty$ using \eqref{ttp_eq}. To begin with, using Lemma \ref{lem_votp} \begin{equation*}
\begin{split}
		\| I \|_{\infty} & \lesssim \| v[\tp ] \|_\infty \| \otp + \tb \|_{C^1} \lesssim \left( \| v[\ttp ] \|_\infty +\| v[\otp ] \|_\infty \right) ( (\lt \nt) \| \otp \|_\infty  + \lambda N \log N ) \\
		& \lesssim ( \| W \|_\infty +(\lt\nt)^{-1} \log N \| \otp \|_\infty)((\lt \nt) \| \otp \|_\infty + \lambda N  \log N ) \\
		& = (  (\lt \nt)\| W \|_\infty + \log N \| \otp \|_\infty)( \| \otp \|_\infty + (\lt \nt)^{-1}\lambda N  \log N ).
\end{split}
\end{equation*}
The  term $II$ is bounded by 
\begin{equation*}
	\begin{split}
		\| II \|_{\infty} \lesssim \log N \| \ttp \|_\infty .
	\end{split}
\end{equation*} Lastly, using \eqref{votp_est}, 
\begin{equation*}
\begin{split}
	\| III \|_{\infty} & \lec  \| \tp \|_\infty \| \nabla v[\otp ] + \nabla v[\ttp ] \|_\infty + \| \tb \cdot  \nabla v[\otp ] \|_\infty + \| \tb \|_\infty \|  \nabla v[\ttp ] \|_\infty \\
	& \lec (\| \ttp \|_\infty + \| \otp \|_\infty )   \left(  \| \otp \|_\infty \log (2+ \| \otp \|_{C^\gamma }) + \| \ttp \|_\infty \log (2 +\| \ttp \|_{C^\gamma}) \right) \\
	& \qquad  +  \| \otp \|_\infty N^{ct} (\log N )^2 \left( \lt^{-1} \lambda N + \nt^{-1} \right)  + \| \ttp\|_\infty (\log N )\log (2+ \| \ttp \|_{C^\gamma })
	\\
	& \lec (\| \ttp \|_\infty + \| \otp \|_\infty )^{2}  +  \| \otp \|_\infty N^{ct} (\log N )^2 \left( \lt^{-1} \lambda N + \nt^{-1} \right)  + \| \ttp\|_\infty (\log N ) . 
\end{split}
\end{equation*} Combining the terms, we obtain that \begin{equation*}
		\begin{split}
			\p_t \| \ttp \|_\infty &\lec \left(\log N +  \| \otp \|_\infty + \| \ttp \|_\infty  \right) \| \ttp \|_\infty  +   \left( \| \otp \|_\infty + (\lt \nt)^{-1}\lambda N  \log N \right)  (\lt \nt)\| W \|_\infty\\
			&\qquad    +  \left( (1 + \log N) \| \otp \|_\infty + (\lt \nt)^{-1} \lambda N (\log N)^2  +  N^{ct} (\log N )^2 \left( \lt^{-1} \lambda N + \nt^{-1} \right)   \right)  \| \otp \|_\infty. 
		\end{split}
\end{equation*}
Using that $ \| \otp \|_\infty, \| \ttp \|_\infty \ll 1 $, $ (\lt \nt)^{-1} \lambda N \ll 1$, this simplifies to \begin{equation}\label{ttp_est}
		\begin{split}
			\p_t \| \ttp \|_\infty &\lec  \log N  \left( \| \ttp \|_\infty  +     (\lt \nt)\| W \|_\infty \right)   +  \left(  \| \otp \|_\infty + (\lt \nt)^{-1} \lambda N   +  N^{ct}  \left( \lt^{-1} \lambda N + \nt^{-1} \right)   \right) (\log N)^{2}  \| \otp \|_\infty \\
			&\lec  \log N  \left( \| \ttp \|_\infty  +     (\lt \nt)\| W \|_\infty \right)   +  \lmb^{-c_*} N^{ct} (\log N)^{2}  \| \otp \|_\infty.
		\end{split}
	\end{equation}

\noindent \textbf{Step 2. Estimate of $\| W \|_\infty$.}  In order to derive the PDE for $W$ we first use \eqref{eq_otp} to obtain that
\eqnb\label{votp_eq}
\p_t v[\otp ] + v[\tb ] \cdot \nabla v[\otp] + v[\otp ] \cdot \nabla v[\tb ] + \nabla p =0 ,
\eqne
where $p$ is the potential-theoretic solution of 
\[
-\Delta p = 2 \p_j  v_i[\phi]  \p_i v_j [\otp ] 
\]
in $\R^3 $, for each $t$. Indeed, recalling the identity
\eqnb\label{eq_calculus}
\nabla \times (a\cdot \nabla b) = a\cdot \nabla (\nabla \times b) - (\nabla \times a ) \cdot \nabla b
\eqne
we have that, for $U\coloneqq v[\tb ] \cdot \nabla v[\otp] + v[\otp ] \cdot \nabla v[\tb ] + \nabla p$, $\mathrm{div}\, U=0$, and
\[
\curl \,U  = v[\tb ] \cdot \nabla \otp - \otp \cdot \nabla v[\tb ] ,
\]
which shows that $v[v[\tb ] \cdot \nabla \otp - \otp \cdot \nabla v[\tb ] ]=U$, and hence taking $v[\cdot ]$ of \eqref{eq_otp} we obtain \eqref{votp_eq}. Moreover, since $\tb + \tp $ solves the Euler equation \eqref{3d_euler_vorticity}, 
\eqnb\label{vtp_eq}
\p_t v [\tp ] 
= \p_t v[\tp +\tb] - \p_t v[\tb ] 
= -v[\tp + \tb ] \cdot \nabla v [\tp  ] -v[\tp ] \cdot \nabla v[\tb ] + \nabla q,
\eqne
where $q$ is the potential-theoretic solution of 
\eqnb\label{q_eq}
\Delta q =  \p_i \p_j (  v_i [\tp + \tb ] v_j [\tp ]  + v_i [\tb ] v_j [\tp ] )=  \p_i ( 2 v[\tp ] \cdot \nabla  v_i [ \tb ]  +v[\tp ] \cdot \nabla  v_i [\tp ] ).
\eqne
Thus, combining \eqref{votp_eq} and \eqref{vtp_eq}, we see that \begin{equation*}
	\begin{split}
		\p_t W &= \p_t v[\tp ] - \p_t v[\otp ] = -v[\tp + \tb ] \cdot \nabla v [\tp  ] -v[\tp ] \cdot \nabla v[\tb ] + \nabla q + v[\tb ] \cdot \nabla v[\otp] + v[\otp ] \cdot \nabla v[\tb ] + \nabla p,
	\end{split}
\end{equation*} which gives after rearranging
\eqnb\label{eq_for_W}
\begin{split}
(\p_t + v[\tp + \tb ] \cdot \nabla )W =  - (W +v[\otp ]  )\cdot \nabla v[\otp ] - W \cdot \nabla v[\tb ]  
 + \nabla Q 
\end{split}
\eqne
where $Q = q+p$ is the potential-theoretic solution of 
\eqnb\label{Q_eq}
- \Delta Q = - 2 \p_j W_i ( \p_i v_j [\tb ] + \p_i v[\otp ] ) - \p_j W_i \p_i W_j - \p_j v_i [\otp ] \p_i v_j [\otp ] .
\eqne
We have that
\eqnb\label{vel_W_est}
\begin{split}
\frac{\d }{\d t} \| W \|_\infty &\lec \| W \|_\infty \left( \|\nabla v[ \otp ] \|_\infty  + \| \nabla v [\tb ] \|_\infty \right) + \|v[\otp ] \cdot  \nabla v[\otp ] \|_\infty    + \| \nabla Q \|_\infty  \\
& =: \| W \|_\infty \underbrace{ \left( \|\nabla v[ \otp ] \|_\infty  + \| \nabla v [\tb ] \|_\infty \right)}_{\lec \log N }  +  \|IV \|_\infty    + \|\nabla Q  \|_\infty 
\end{split}
\eqne
where we recalled \eqref{vtb_C1} and used $$\| \nabla v[\otp ] \|_\infty \lec \| \otp \|_\infty \log (2+ \| \otp \|_{C^\gamma } ) \lec \lt^{\frac32 -s } \nt^{-s} N^{ct} \lec 1$$ to bound the expressions in the bracket by $\log N$. For $IV = v[\otp ] \cdot  \nabla v[\otp ]$, we use Lemma \ref{lem_votp} and the previous estimate $\| \nabla v[\otp ] \|_\infty \lec \| \otp \|_\infty \log (2+ \| \otp \|_{C^\gamma } )$ to get  
\begin{equation*}
	\begin{split}
		\norm{ IV}_\infty \lesssim (\lt \nt)^{-1} \| \otp \|_\infty^{2} N^{ct} \log N  \lesssim (\lt \nt)^{-1} \| \otp \|_\infty \lmb^{ (\frac32 - (1+a)s)B } N^{ct} \log N.  
	\end{split}
\end{equation*}

For the pressure term, we have from \eqref{Q_eq} and $\| \otp  \|_\infty \log (2+ \| \otp \|_{C^\gamma} ) + \| \ttp \|_\infty \log(2 + \| \ttp \|_{C^\gamma} ) \lesssim 1$ that
\eqnb\label{q_est}
\begin{split}
\| \nabla Q \|_\infty &\lec   \| W \|_\infty \left( \| \nabla v[\tb ] \|_\infty + \| \nabla v[\otp ] \|_\infty \right) + \| W \|_\infty \| \nabla W \|_\infty + \| v[\otp ] \cdot \nabla v[\otp ] \|_\infty   \\
&\lec \| W \|_\infty \left( \log N + \| \otp  \|_\infty \log (2+ \| \otp \|_{C^\gamma} ) + \| \ttp \|_\infty \log(2 + \| \ttp \|_{C^\gamma} ) \right) + 	\norm{ IV}_\infty \\
&\lec  \| W \|_\infty  \log N   + 	\norm{ IV}_\infty,
\end{split}
\eqne
as required, where, in the first line, we used the fact that the logarithmic term is $O(1)$.  Indeed, 
\[
\begin{split}
&\| 2W \cdot \nabla (v[\tb ] +  v[\otp ] ) +  W \cdot \nabla W  +  v[\otp ] \cdot \nabla v[\otp ] \|_{C^\gamma} \\
& \quad \lec_\gamma   \| W \|_{C^\gamma} \left( \| \nabla v [\tb ] \|_\infty + \| v [\otp ] \|_\infty \right) + \| W \|_\infty \left( \| \tb  \|_{C^\gamma} + \| \otp  \|_{C^\gamma} \right)  +  \| W \|_{C^\gamma } \| \nabla W \|_\infty  +\| W \|_\infty \| \ttp \|_{C^\gamma} + \| \otp  \|_{C^\gamma}^2\\
&\quad \lec_\gamma   \| \ttp \|_{C^\gamma } (\log N + \lt^{\frac12-s} \nt^{-1-s} N^{ct} \log N+ \| \ttp \|_\infty )+ 1 \lec_{\gamma, B} 1
\end{split}
\]
where we used the fact that $\| W \|_{C^\gamma} \lec \| \ttp \|_{C^\gamma} $, \eqref{votp_est} in the third inequality.

Collecting the terms, the velocity estimate \eqref{vel_W_est} becomes
\[
\frac{\d }{\d t } \| W \|_\infty \lec  \| W \|_\infty   \log N   + (\lt \nt)^{-1} \| \otp \|_\infty \lt^{\frac32-s} \nt^{-s} N^{ct} \log N
\] and combining this with \eqref{ttp_est} and $ \| \otp \|_\infty \lec \lt^{\frac32 -s } \nt^{-s} N^{ct}$ (recall \eqref{otp_Ck}) we obtain \begin{equation*}
	\begin{split}
		\frac{\d }{\d t } \left( \| \ttp \|_\infty + \lt \nt \| W \|_\infty \right) \lec \left( \| \ttp \|_\infty + \lt \nt \| W \|_\infty \right)    \log N   +  \lt^{\frac32 -s } \nt^{-s}  \lmb^{-c_*}  N^{ c t} (\log N)^{2}   .
	\end{split}
\end{equation*}
  This gives \begin{equation}\label{ttp_and_W_est}
\begin{split}
	\| \ttp \|_\infty + \lt \nt \| W \|_\infty  \lec  \lt^{\frac32 -s } \nt^{-s}  \lmb^{-c_*}   N^{ c t} (\log N)^{3} 
\end{split}
\end{equation} for $t\in [0,T]$.

\medskip

\noindent \textbf{Step 3. Estimate of $\| \ttp \|_{C^\gamma}$.} We return to \eqref{ttp_eq} and write the equation along the flow $X(x,t)$ generated by the velocity $v[\phi]+v[\psi]$: for any two points $x \ne x'$, we compute \begin{equation*}
	\begin{split}
		&\frac{\d}{\d t}\left( \frac{\ttp(X(x)) - \ttp(X(x'))}{ |X(x) - X(x')|^{\gamma} } \right)  = \frac{H(X(x)) - H(X(x'))}{|X(x) - X(x')|^\gamma} \\
		&\qquad  - \gamma (\ttp(X(x)) - \ttp(X(x'))) \frac{ ( X(x) - X(x') ) \cdot ( (v[\phi]+v[\psi])(X(x)) - (v[\phi]+v[\psi])(X(x'))  )  }{|X(x) - X(x')|^{\gamma+2}},
	\end{split}
\end{equation*} where we omitted ``$t$'' in the notation and
\[
H \coloneqq I + II + III
\]
denotes the right hand side of \eqref{ttp_eq}. Taking absolute values and using the mean value theorem for $v[\phi]+v[\psi]$,  \begin{equation}\label{eq:C-gmm}
\begin{split}
	\frac{\d}{\d t}  \frac{|\ttp(X(x)) - \ttp(X(x')) |}{ |X(x) - X(x')|^{\gamma} }  \le \| H \|_{C^\gamma} + \gamma  \frac{|\ttp(X(x)) - \ttp(X(x')) |}{ |X(x) - X(x')|^{\gamma} } ( \| v[\phi] \|_{C^1} + \| v[\psi] \|_{C^1}) . 
\end{split}
\end{equation} We now use Gronwall inequality with $\| v[\phi] \|_{C^1} + \| v[\psi] \|_{C^1} \lec \log N$ and then take  the supremum over $x, x'$ with $x\ne x'$ to obtain \begin{equation}\label{ttp-C-gmm-pre}
\begin{split}
	 \|\ttp\|_{C^\gmm} &= \sup_{x\ne x'}\frac{|\ttp(X(x)) - \ttp(X(x')) |}{ |X(x) - X(x')|^{\gamma} } \\
	 & \le \int_0^t \| H(\cdot, s) \|_{C^\gmm} \exp\left( \int_s^t \left( \| v[\phi](\cdot,\tau) \|_{C^1} + \| v[\psi](\cdot,\tau) \|_{C^1} \right) \d\tau \right) \d s \\
	 & \le\int_0^t  N^{c(t-s)}   \| H(\cdot, s) \|_{C^\gmm} \d s. 
\end{split}
\end{equation} Hence it suffices to estimate $\| H \|_{C^\gamma}$.

We begin by interpolating the $L^\infty$ estimate from Lemma \ref{lem_votp} with the $L^{\infty}$ estimate \eqref{na_votp_Ck} for $\nabla v[\otp]$: \begin{equation*}
		\begin{split}
			 \| v[\otp ] \|_{C^\gmm} \lesssim (\lt \nt)^{\gmm-1} \log N \| \otp \|_{\infty}.
		\end{split}
	\end{equation*} Similarly, we may interpolate \begin{equation*}
	\begin{split}
		 \| v[\ttp ] \|_{C^\gmm} \lesssim  \| v[\ttp ] \|_{\infty}^{1-\gmm} \| \ttp \|_{\infty}^{\gmm} = (\lt \nt)^{\gmm-1} ( \lt \nt  \| W\|_{\infty})^{1-\gmm} \| \ttp \|_{\infty}^{\gmm} .
	\end{split}
	\end{equation*} This gives \begin{equation*}
	\begin{split}
		 \| v[\tp ] \|_{C^\gmm} \lesssim  (\lt \nt)^{\gmm-1} \left( \log N \| \otp \|_{\infty} +  ( \lt \nt  \| W\|_{\infty})^{1-\gmm} \| \ttp \|_{\infty}^{\gmm} \right) 
	\end{split}
	\end{equation*} and we estimate \begin{equation*}
		\begin{split}
			\| II \|_{C^\gmm} & \lesssim \| v[\tp ] \|_{C^\gmm} \| \otp + \tb \|_{C^1} +  \| v[\tp ] \|_{\infty} \| \otp + \tb \|_{C^{1,\gmm}} \\
			&  \lesssim  (\lt \nt)^{\gmm-1} \left( \log N \| \otp \|_{\infty} +  ( \lt \nt  \| W\|_{\infty})^{1-\gmm} \| \ttp \|_{\infty}^{\gmm} \right) \left( (\lt \nt) \| \ttp \|_\infty + \lmb N\log N \right) \\ 
			&\qquad + \left( \| v[\ttp ] \|_\infty +\| v[\otp ] \|_\infty \right) ( (\lt \nt)  \| \otp \|_{C^{\gmm}}  +( \lambda N)^{1+\gmm} \log N ) \\
			& \lesssim  (  (\lt \nt)\| W \|_\infty +  \| \ttp \|_{\infty}  + \log N \| \otp \|_\infty)\left(  (\lt \nt)^{\gmm} \| \ttp \|_{\infty} + \| \otp \|_{C^\gmm} + (\lt \nt)^{-1}(\lambda N)^{1+\gmm}  \log N \right). 
		\end{split}
\end{equation*}
The term $II$ is easily bounded by 
	\begin{equation*}
		\begin{split}
			\| II \|_{C^\gmm}  \lesssim \log N \left(  \| \ttp \|_{C^\gmm} + (\lmb N)^\gmm  \| \ttp \|_\infty\right) .
		\end{split}
	\end{equation*} For $III$, we recall \eqref{III3} that $III_3=\tb \cdot \nabla v[\otp ] $ is the most challenging term (recall the claim \eqref{tb_na_v_otp}). In order to estimate $\| III_3 \|_{C^\gamma}$, we first note that
\[
\| \tb \cdot \nabla v[\otp ]\|_{C^1} \lec_\gamma \| \tb \|_\infty \| \nabla v[ \otp ] \|_{C^1 } + \| \tb \|_{C^1} \| \nabla v[\otp ] \|_\infty \lec \lt^{\frac52 -s} \nt^{1 -s} N^{ct}(\log N )^2 
\]
where we used  \eqref{tb_linfty_Cbeta}, \eqref{tb_Ck} and  \eqref{na_votp_Ck}. Applying H\"older interpolation to this and \eqref{tb_na_v_otp} we thus obtain \begin{equation*}
	\begin{split}
		\| III_3 \|_{C^\gamma} &\lec_\gamma \| III_3 \|_{C^1}^\gamma  \| III_3 \|_{\infty}^{1-\gamma} \lec_\gamma \lt^{\frac32-s} \nt^{-s} N^{ct} (\log N )^2 (\lt^{-1} \lambda N + \nt^{-1} )^{1-\gamma} (\lt \nt )^\gamma \\
		&\lec \lt^{\frac12+2\gamma -s} \nt^{\gamma-s} N^{ct} (\log N )^2  \lambda N 
	\end{split}
\end{equation*}using \eqref{votp_est}. Proceeding similarly as in the $L^\infty$ estimate, we have \begin{equation*}
	\begin{split}		 \| III \|_{C^\gmm} & \lesssim  (\| \ttp \|_\infty + \| \otp \|_\infty )(\| \ttp \|_{C^\gmm} + \| \otp \|_{C^\gmm} )   + \lt^{\frac12+2\gamma -s} \nt^{\gamma-s} N^{ct} (\log N )^2  \lambda N  \\
		 &\qquad + \left( (\lmb N)^{\gmm}  \| \ttp\|_\infty +  \| \ttp\|_{C^\gmm } \right)\log N . 
	\end{split}
	\end{equation*}
Collecting all the bounds, we have \begin{equation*}
	\begin{split}
		& \| H \|_{C^\gmm} \lesssim  (\log N (1 + \|\otp\|_{\infty} ) +  (\lt \nt)\| W \|_\infty  + \| \ttp \|_\infty + \| \otp \|_\infty )(\| \ttp \|_{C^\gmm} + ((\lmb N)^{\gmm} + (\lt\nt)^{\gmm} ) \| \ttp\|_\infty  ) \\
		&\qquad  + \left(  (\lt \nt)\| W \|_\infty  +   \| \ttp \|_\infty + \| \otp \|_\infty \right)   \| \otp \|_{C^\gmm} \\
		&\qquad + (  \lt \nt\| W \|_\infty +  \| \ttp \|_{\infty}  + \log N \| \otp \|_\infty) (\lt \nt)^{-1}(\lambda N)^{1+\gmm}  \log N +  \lt^{\frac12+2\gamma -s} \nt^{\gamma-s} N^{ct} (\log N )^2  \lambda N      . 
	\end{split}
\end{equation*} 
 Using now that $\lt \nt\| W \|_\infty + \| \ttp \|_\infty , \|\otp\|_{\infty} \ll 1$, $\lmb N \ll \lt \nt $,    this estimate simplifies into \begin{equation*}
		\begin{split}
		\| H \|_{C^\gmm}&  \lesssim  \log N (\| \ttp \|_{C^\gmm} + (\lt\nt)^{\gmm} \| \ttp\|_\infty  )  +  (\lt \nt)^{-1-\gmm}(\lambda N)^{1+\gmm} ( \log N)^2  (\lt \nt)^{\gmm}\| \otp \|_\infty   \\
			& \quad + \| \otp \|_{C^\gamma}  + \lt^{\frac12+2\gamma -s} \nt^{\gamma-s} N^{ct} (\log N )^2  \lambda N  \\
			&  \lec \log N (\| \ttp \|_{C^\gmm} + (\lt\nt)^{\gmm} \| \ttp\|_\infty  )  + \lt^{\frac32+\gamma -s} \nt^{\gamma-s} \lmb^{c_{*}} N^{ct} (\log N)^{2}.
		\end{split}
\end{equation*} 
  Then, just using \eqref{ttp_and_W_est} to estimate $ (\lt\nt)^{\gmm} \| \ttp\|_\infty$, we get from \eqref{ttp-C-gmm-pre} and Gronwall's inequality that \begin{equation*}
\begin{split}
	\| \ttp \|_{C^\gmm} \lesssim  \lt^{\frac32+\gamma-s} \nt^{\gamma-s} \lmb^{c_{*}} N^{c t} (\log N)^{3}.
\end{split}
\end{equation*}	 

\medskip

\noindent \textbf{Step 4. Estimate of $\| \ttp \|_{H^{\bt} }$.} 
We will use the equation \eqref{eq_for_W} for $W$, \begin{equation*}
	\begin{split}
		\p_t  W +  (v[\tb ] + v[\psi] )\cdot \nabla W
		= W \cdot \nabla( v[\otp] + v[\tb]  ) - v[\otp] \cdot \nabla v[\otp] + \nabla Q. 
	\end{split}
\end{equation*} Taking the inner product with $W$ and integrating, the pressure term $\nabla Q$ drops after integrating by parts and we immediately obtain \begin{equation*}
\begin{split}
	\frac{\d}{\d t} \norm{W}_{2}^2 \lesssim  \norm{W}_{2}^2\log N + \norm{ \nabla v[\otp] }_\infty \norm{W}_{2} \norm{v[\otp]}_{2}
\end{split}
\end{equation*} which gives after using \eqref{votp-est-L2} \begin{equation*}
\begin{split}
	\frac{\d}{\d t} \norm{W}_{2} & \lesssim  \norm{W}_{2}\log N + \norm{ \otp }_\infty (\lt\nt)^{-1-s} N^{ct} \log N    \lesssim \norm{W}_{2}\log N  + \lt^{\frac12 -2s} \nt^{-1-2s} N^{ct} (\log N)^2 .
\end{split}
\end{equation*} This gives \begin{equation*}
\begin{split}
\norm{W}_{2} \lesssim \lt^{\frac12 -2s} \nt^{-1-2s} N^{ct} (\log N)^2 . 
\end{split}
\end{equation*} To obtain an estimate for the $H^{1}$ norm, we just take a derivative in the equation for $W$ and again take the inner product with itself. Then, the pressure term again disappears and we can estimate the other terms as \begin{equation*}
\begin{split}
		\frac{\d}{\d t} \norm{\nabla W}_{2}^2 &\lesssim  \norm{\nabla W}_{2}^2\log N + \left(\norm{ \nabla v[\otp] }_\infty \norm{\nabla v[\otp]}_{2} + \norm{ D^2 v[\otp] }_\infty \norm{  v[\otp]}_{2} \right) \norm{\nabla W}_{2} \\
		&\qquad + \norm{ D^{2}( v[\otp] + v[\tb]  ) }_{\infty} \norm{W}_{2} \norm{\nabla W}_{2}. 
\end{split}
\end{equation*} This time, proceeding similarly we can obtain \begin{equation*}
\begin{split}
	\norm{\nabla W}_{L^2} \lesssim \lt^{\frac32 -2s} \nt^{-2s} N^{ct} (\log N)^2 .
\end{split}
\end{equation*}  Estimating higher integer order $L^2$ Sobolev norms are analogous, and then the case of general $H^{\bar{\bt}}$ with $\bar{\bt}> 0$ for $W$ follows from interpolation. 
\medskip

\noindent \textbf{Step 5. Conclusion of the proof.}

Steps 1--4 prove the estimates \eqref{W_and_ttp_prop} on time interval $[0,T_0]$. If $B>1$ is sufficiently large (i.e. large enough for $\lambda^{-c_*}$  to be much smaller than $N^{\overline{c}T}$) then, for sufficiently large $\lambda$ these estimates imply that $\| \ttp \|_{C^\gamma} , \| W\|_\infty \leq 1/2$ on $[0,T_0]$. This shows that $T_0=T$, concluding the proof.
\end{proof}

\subsubsection{Proof of Theorem~\ref{thm_black_box}}
Thanks to Proposition~\ref{prop_pert_error} we can now complete the proof of Theorem~\ref{thm_black_box}.

We first  see that the error estimates in Proposition~\ref{prop_pert_error}, the estimates \eqref{otp_Ck} on $\otp$, and the estimate \eqref{votp_est} on $v[\otp]$ give that 
\eqnb\label{temp06}
\begin{split}
\| \tp \|_{C^\gamma} &\lec \lt^{\frac32 + \gamma -s } \nt^{\gamma -s } N^{ct} ,\\
\| \tp \|_\infty &\lec \lt^{\frac32  -s } \nt^{-s } N^{ct} ,\\
\| v[\tp] \|_{\infty} &\leq \| v[\otp] \|_{\infty}+\| W \|_{\infty}\lec \lt^{\frac12  -s } \nt^{-1 -s } N^{ct} \log N 
\end{split}
\eqne
for all $t\in [0,T]$. In particular we can obtain that
\eqnb\label{omega_Ck}
\| \omega \|_{C^k} \lec_k \lambda^{c_k}
\eqne
for all such $t$, and all $k\geq 0$. Indeed, this follows using a similar inductive argument as in the proof of Lemma~\ref{lem_high}. In fact, the induction is simpler, since we only require much less precise bounds \eqref{omega_Ck}, as compared to \eqref{tb_Ck}.  

We now fix $B>1$ to be sufficiently large so that the claims of Lemma~\ref{lem_eta}, Lemma~\ref{lem_votp}, \eqref{otp_Cgamma_shrinks}, \eqref{choice_cstar} and Proposition~\ref{prop_pert_error} are valid and 
\eqnb\label{choice_B}
\| v[\omega ] \|_\infty \leq \| v[\tb ] \|_\infty + \| v[\tp ] \|_\infty \leq c  (\lambda N)^{-1} \log N + c \lt^{\frac12  -s } \nt^{-1 -s } N^{ct} \log N \leq \lambda^{-1} /2T
\eqne
where we used \eqref{vtb_linfty} and \eqref{temp06}. Therefore 
\eqnb\label{tempsupp}
\supp\, \omega(\cdot , t) \subset \supp\, \omega (\cdot ,0 ) +B(\lambda^{-1}/2) \subset B(\lambda^{-1})
\eqne
for all $t\in [0,T]$,  showing the first property in \eqref{black_box_claims}. This and \eqref{omega_Ck} also prove the second property in \eqref{black_box_claims}. As for the third one, we note that 
\[
\| \omega \|_p \lec \lambda^{-\frac3p } \| \omega \|_\infty \lec  \lambda^{-\frac3p } \left(  \| \otb \|_\infty + \|\ttb \|_\infty + \| \otp \|_\infty + \| \ttp \|_\infty \right) \lec K^{-1}\lambda^{-\frac3p }  \log N \lec_p K^{-1} \lambda^{-\frac1p},
\]
as required. The last two claims follow from the $H^\beta$ estimates \eqref{otb_Hb}, \eqref{otp_Hbeta_norm} of the pseudosolutions $\otb$, $\otp$ (respectively), and the $H^\beta$ error estimates \eqref{V_and_ttb}, \eqref{W_and_ttp_prop}. \qedsymbol

\section{Gluing}\label{sec_gluing}
Here we prove the instantaneous continuous loss of regularity, Theorem~\ref{thm_main}.

We first recall the norm inflation Theorem~\ref{thm_black_box}.
\begin{thm}\label{thm_black_box_repeat}
There exists $T\in (0,1/2]$ such that for every $K\geq 1$, and every sufficiently large $\lambda\geq 1$ there exists $\omega^{(0)}_{K,\lambda} \in C_c^\infty $ such that there exists a unique strong solution $\omega_{K,\lambda }$ of the 3D incompressible Euler equations \eqref{3d_euler_vorticity} on time interval $[0,T]$ such that
\[
\omega_{K,\lambda } = \tb + \tp
\]
with 
\eqnb\label{black_box_claims_repeat}
\begin{split}
&\supp\, \omega_{K,\lambda } (\cdot , t)  \subset B(0,\lambda^{-1} )\\
&\| \omega_{K,\lambda } \|_{H^5 } \lec K^{-1} \lambda^{c},\\
&\| \omega_{K,\lambda } \|_{L^p } \lec K^{-1} \lambda^{-c_p}\quad \text{ for every } p\in [1,\infty ),\\
&\| \tb (\cdot , t) \|_{H^\beta }  \sim K^{-1} \lambda^{c_1(\beta -s)}, \quad \text{ uniformly in }t\in [0,T],\\
&\| \tp (\cdot , t) \|_{H^\beta } \sim K^{-t(1+\beta )/s} \lambda^{c_2(\beta - s) + t(1+\beta )c_3}  (\log N )^{-t(1+\beta )/s}
\end{split}
\eqne
where $c_1,c_2,c_3$ are constants depending on $s$ only.
\end{thm}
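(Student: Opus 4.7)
The plan is to assemble the ingredients developed throughout Section~\ref{sec_pf_thm1}. First, I would set the initial data
\[
\omega^{(0)}_{K,\lambda}(x) \coloneqq \otb(x) + \otp(x,0),
\]
with $\otb$ as in \eqref{otb}, $\otp(\cdot,0)$ as in \eqref{otp-def}, and the parameters tuned so that $\lambda^{3/2-s}N^{-s} = K\log N$ (relation \eqref{lambda_vs_N}), $\nt = \lt^{a}$ with $a$ satisfying \eqref{choice_a}, and $\lt = \lambda^{B}$ with $B$ large enough to validate \eqref{choice_cstar} and Proposition~\ref{prop_pert_error}. The smoothness, compact support, and divergence-free property of both pseudosolutions ensure that $\omega^{(0)}_{K,\lambda} \in C_c^\infty$, and standard local well-posedness furnishes a unique smooth solution on a short time interval. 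The solution decomposes as $\omega_{K,\lambda} = \tb + \tp$, where $\tb$ is the solution with initial data $\otb$ and $\tp$ is the correction making $\tb + \tp$ solve \eqref{3d_euler_vorticity}; the errors $\ttb \coloneqq \tb - \otb$ and $\ttp \coloneqq \tp - \otp$ are controlled on a common $[0,T]$, with $T$ absolute, by Propositions~\ref{prop_bckgrd_err} and~\ref{prop_pert_error}, respectively.

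Next, I would verify the support claim. Combining the velocity bound \eqref{vtb_linfty} for $v[\tb]$ with the $L^\infty$ bound \eqref{temp06} for $v[\tp]$ (a consequence of Lemma~\ref{lem_votp} and the error bound on $W$ from Proposition~\ref{prop_pert_error}) shows that $\|v[\omega_{K,\lambda}]\|_\infty \leq \lambda^{-1}/(2T)$ once $B$ is sufficiently large. Transporting the initial support along the (divergence-free) flow of $v[\omega_{K,\lambda}]$ then gives $\supp\, \omega_{K,\lambda}(\cdot,t) \subset B(0,\lambda^{-1})$ for $t\in [0,T]$. The $H^5$ bound comes from an inductive $C^{k,\alpha}$ estimate in the spirit of Lemma~\ref{lem_high}, yielding a purely algebraic bound $\|\omega_{K,\lambda}\|_{H^5} \lec K^{-1}\lambda^{c_1}$; no sharp exponent is needed. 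The $L^p$ bound then follows immediately from support confinement, since $\|\omega_{K,\lambda}\|_p \lec \lambda^{-3/p}\|\omega_{K,\lambda}\|_\infty \lec K^{-1}\lambda^{-3/p}\log N \lec K^{-1}\lambda^{-C_p}$ for any $p\geq 1$.

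The core of the proof is the pair of asymptotic $H^\beta$ identities. For $\tb$ I would use \eqref{otb_Hb}, which gives $\|\otb\|_{H^\beta} \sim K^{-1}(\lambda N)^{\beta-s}$, combined with the $H^\beta$ error bound in \eqref{V_and_ttb_prop}, which dominates the error by $K^{-1}N^{-1+\varepsilon+ct}(\lambda N)^{\beta-s}(\log N)^{2}$ and is thus strictly smaller than the pseudosolution piece on $[0,T]$; substituting $N^{-s}\sim K\log N \cdot \lambda^{s-3/2}$ from \eqref{lambda_vs_N} converts $(\lambda N)^{\beta-s}$ into $K^{-1}\lambda^{c_{1}(\beta-s)}$ for the right constant $c_1$. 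For $\tp$, Lemma~\ref{lem_otp} gives $\|\otp\|_{H^\beta} \sim N^{t(1+\beta)/2}(\lt\nt)^{\beta-s}$, while Proposition~\ref{prop_pert_error} bounds $\|\ttp\|_{H^\beta}$ by $(\lt\nt)^{\beta-s}\lambda^{-c_*}N^{\overline{c}t}(\log N)^2$, which is negligible relative to the pseudosolution for $B$ large; again rewriting $N$ in terms of $\lambda$ via \eqref{lambda_vs_N} and $\lt=\lambda^B$, $\nt=\lt^a$ produces the claimed exponents $c_{2}(\beta-s) + t(1+\beta)c_3$ and the $K^{-t(1+\beta)/s}(\log N)^{-t(1+\beta)/s}$ factor.

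The main subtlety I expect is simply the bookkeeping of exponents at the end: one has to choose $B$ so large that every appearance of $\lambda^{-c_*}$ (from Proposition~\ref{prop_pert_error}) and $N^{-1+\varepsilon + ct}$ (from Proposition~\ref{prop_bckgrd_err}) strictly beats the corresponding pseudosolution $H^\beta$ size on $[0,T]$ uniformly in $\beta \in [0,s]$, and one must also choose $T\leq 1/(2c)$ or similar so that the factors $N^{\overline{c}t}, N^{c_\beta t}$ do not spoil the subleading nature of the errors. Once these choices are fixed in the order ``first $a$, then $\alpha$, then $B$, then $\lambda$ sufficiently large'', the four estimates in \eqref{black_box_claims_repeat} come directly from substituting the background and perturbation estimates, completing the proof.
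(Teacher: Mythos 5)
Your proof plan is correct and follows essentially the same structure as the paper's own argument: it invokes the same ingredients (Propositions~\ref{prop_bckgrd_err} and~\ref{prop_pert_error}, the pseudosolution $H^\beta$ asymptotics \eqref{otb_Hb} and \eqref{otp_Hbeta_norm}, the velocity bounds \eqref{vtb_linfty} and \eqref{temp06}, and an inductive $C^k$ estimate as in Lemma~\ref{lem_high}), in the same order and with the same parameter hierarchy. The only cosmetic differences are in how the final $L^p$ exponent $C_p$ is written and in the level of detail spent on the exponent bookkeeping, neither of which changes the argument.
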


We let $T\in (0,1/2]$ be given by Theorem~\ref{thm_black_box_repeat} (recall~\eqref{choice_vareps}), and we set 
\eqnb\label{choiceK}
K_j \coloneqq \frac{2^{j}}{\varepsilon}, \quad \text{ for }j\geq 1.
\eqne
We let
\eqnb\label{def_mathcalC}
\mathcal{C} \coloneqq L^2
\eqne
(recall $L$ is the embedding constant from \eqref{Linfty_into_H^2}--\eqref{vLinfty_into_omegaL^4}) and we let $\{\lambda_j\}_{j\geq 1} \subset (4,\infty )$ be increasing sufficiently fast so that  the claim of Theorem~\ref{thm_black_box_repeat} holds and
\eqnb\label{choice_lambdaj}
\lambda_j \geq \exp (K_j),\qquad \| \omega_{K_j,\lambda_j } \|_4 \leq \frac{2^{-(j-1)}}{L}
\eqne
for all $t\in [0,T]$, $j\geq 1$.

We let $\{ D_j \}_{j\geq 1}\subset (2,\infty)$ be a rapidly diverging sequence such that $D_1=3$ and $D_j \geq 2 D_{j-1} +2$ for all $j\geq 2$, to be fixed later.

 For $n\geq 1$ we will use short-hand notation
\[
\omega_{\lambda_n} (\cdot , t) \coloneqq \omega_{K_n,\lambda_n}(\cdot - D_n e_1,t),
\] 
the $\omega_{K_j,\lambda_j}$'s are the solutions to the Euler equations provided by Theorem~\ref{thm_black_box_repeat}, and we will  denote by $\omega^{n}$ the solution to the Euler equation \eqref{3d_euler_vorticity} with initial conditions $\sum_{j=1}^{n}\omega_{\lambda_j}(\cdot ,0)$. We note that $\omega^1$ exists until at least $T_0$ (by Theorem~\ref{thm_black_box_repeat}) and is supported in $B(0,\lambda_1^{-1}) \subset B(0,1/4)$.\\

\noindent\texttt{Step 1.} We show that, if $D_j \to \infty$ sufficiently fast then, for every $n\geq 1$,  $\omega^n$ exists  until at least $T$ and that  and
 \begin{eqnarray}
 \| \omega^n - (\omega^{n-1}+ \omega_{\lambda_n} ) \|_{H^5} &\leq \frac{\varepsilon }{\mathcal{C} 2^n} ,\label{glue1a}\\
 \supp\,\omega^n   &\subset \bigcup_{j=1}^n B(D_j e_1, 1) ,\label{glue1b0}\\
 |\supp\,\omega^n  | &\leq 1 ,\label{glue1b}\\
 \| \omega^n \|_{H^6}  &\leq C_n,\label{glue1c}\\
 \| v[\omega^n ] \|_\infty &\leq 1 ,\label{vel=1}\\
 \|\omega^n  \|_p &\lec_p 1 \label{glue1d}
 \end{eqnarray}
for all $t\in [0,T]$.
 
 The claim is true for $n=1$ by Theorem~\ref{thm_black_box_repeat}. As for $n\geq  2$, note that, by continuity, there exists $T_0\in (0,T]$ such that 
\[
\omega^n = \oo + \wo
\]
for $t\in [0,T_0]$, where $\oo (\cdot , 0) = \sum_{j=1}^{n-1} \omega_{K_j,\lambda_j } (\cdot - D_j e_1,0)$, $\wo (\cdot ,0) = \omega_{K_n,\lambda_n } (\cdot - D_n e_1,0)$ and that 
\eqnb\label{supports_glue}
\supp\,\oo (\cdot ,t )\subset \bigcup_{j= 1}^{n-1} B\left( D_je_1,1 \right), \qquad \supp\,\wo (\cdot , t)\subset B(D_ne_1,1)
\eqne
for $t\in [0,T_0]$. Note that both $\oo$ and $\wo$ satisfy the equation
\[
\p_t \omega + u[ \oo + \wo ] \cdot \nabla \omega = \omega \cdot \nabla u[ \oo + \wo ]
\]  
Setting 
\[
A^n \coloneqq \omega^n - (\omega^{n-1}+ \omega_{\lambda_n} ),\]
for $j\geq 2$, we now show that
\eqnb\label{bound_A}
\| A^n \|_{H^5} \leq \frac{\varepsilon }{2^n \mathcal{C}},
\eqne
where $\mathcal{C}\geq 1$ is defined in \eqref{def_mathcalC} above. We have
\[
\begin{split}
\p_t A^n =& 
- u[\oo  + \wo ] \cdot \nabla \oo + u [\omega^{n-1} ] \cdot \nabla \omega^{n-1} + \oo \cdot u [\oo + \wo ] - \omega^{n-1} \cdot \nabla u [\omega^{n-1}]\\
&- u[\oo  + \wo ] \cdot \nabla \oo + u [\omega_{\lambda_n} ] \cdot \nabla \omega_{\lambda_n} + \oo \cdot u [\oo + \wo ] - \omega_{\lambda_n} \cdot \nabla u [\omega_{\lambda_n}]
\end{split}
\]
Note that the right-hand side above can be decomposed into terms involving only $\omega^{n-1}$ and $\omega_{\lambda_n}$ and
\[
\overline{A^n} \coloneqq \oo - \omega^{n-1},\qquad \widetilde{A^n} \coloneqq \wo - \omega_{\lambda_n}.
\]
Observe that $\overline{A^n}$, $\widetilde{A^n}$ have disjoint supports in space (so that, in particular, $\|\overline{A^n} \|_{H^5} ,\|\widetilde{A^n} \|_{H^5}\leq \| A^n \|_{H^5}$). Let $T'\in (0,T_0]$ be the largest time such that $\| A \|_{H^5} \leq 1$ on $[0,T']$. Then, performing the $H^5$ estimate we obtain
\eqnb\label{est_A}
\begin{split}
\frac{\d }{\d t} \| A^n \|_{H^5} &\leq C_n \| A^n \|_{H^5}^2 +  C_n \left( \| \omega^{n-1} \|_{H^6} + \| \omega_{\lambda_n} \|_{H^6} \right) \| A^n \|_{H^5} + C_n D_n^{-1}\\
& \leq C_n  \| A^n \|_{H^5} + C_n  D_n^{-1}
\end{split}
\eqne
for $t\in [0,T']$, where we noted that the only terms in the $H^5$ estimate not involving $A$ are
\[
u[\omega_{\lambda_n }]\cdot \nabla \omega^{n-1}, \quad u[\omega^{n-1}]\cdot \nabla \omega_{\lambda_n}, \quad \omega^{n-1} \cdot \nabla u [ \omega_{\lambda_n}], \quad \omega_{\lambda_n} \cdot \nabla u [ \omega^{n-1}]
\]
and since the supports (in space) of $\omega^{n-1}$, $\omega_{\lambda_n}$ are separated by at least $D_n-D_{n-1}-2 \gec D_n$, all of the above terms can be bounded in $H^5$ by $C_n \left( \| \omega^{n-1} \|_{H^6} + \| \omega_{\lambda_n} \|_{H^6} \right)^2 D_n^{-1}$. 

From \eqref{est_A} we see that
\[
\| A^n \|_{H^5} \leq C_n D_n^{-1}  \ee^{C_n T_0} \leq D_n^{-1}  \ee^{C_n } 
\]
for all $t\in [0,T']$. Taking $D_n$ sufficiently large the right-hand side is bounded by $\varepsilon / 2^n \mathcal{C}\ll 1$, and so a simple continuity argument shows that \eqref{bound_A} holds for $t\in [0,T_0]$. In order to show that \eqref{bound_A} holds on the entire time interval $[0,T]$, we need to show that $T_0=T$. To this end we will show that \eqref{supports_glue} holds on $[0,T_0]$ with strictly smaller right-hand sides, which implies that $T_0=T$. 

Indeed, since $\lambda_j^{-1} \leq 1/4$ for all $j$ we have that
\eqnb\label{supports_glue_t0}
\supp\,\oo (\cdot ,0) \subset  \bigcup_{j=1}^{n-1} B\left( D_j e_1,\frac14 \right), \qquad \supp\,\wo(\cdot ,0) \subset B\left( D_ne_1,\frac14 \right).
\eqne
Moreover, since 
\[
\omega^n = \sum_{j=1}^n \omega_{\lambda_j} + \sum_{j=2}^n A^j,
\]
we recall \eqref{vLinfty_into_omegaL^4},   and the definition \eqref{def_mathcalC} of $\mathcal{C}$ to obtain
\eqnb\label{vel=1a}
\| v[\omega^n ] \|_\infty \leq L \| \omega^n \|_{L^4}\leq   \frac{1 }2 \sum_{j=1}^n 2^{-j}   + L^2 \sum_{j=2}^n \| A^n \|_{H^5} \leq 1
\eqne
for $t\in [0,T_0]$, where we also used the embedding \eqref{Linfty_into_H^2} as well as our choice \eqref{choice_lambdaj} of the $\lambda_j$'s. Thus, since $T_0\leq T \leq 1/2$, this shows that particle trajectories can travel a distance at most $1/2$. This and \eqref{supports_glue_t0} shows \eqref{supports_glue} for all $t\in [0,T_0]$, which concludes the proof of the existence of $\omega^n$ until $T$ and of \eqref{glue1a}. Moreover, \eqref{glue1b} now follows from the Cauchy formula \eqref{cauchy_formula}, and \eqref{glue1c} follows (with some very large constant $C_n$) by noting that 
\[
\| \omega^n \|_{H^5} \leq \sum_{j=1}^n \| \omega_{\lambda_j} \|_{H^5} + \sum_{j=2}^n \| A^n \|_{H^5} \lec \sum_{j=1}^n \lambda_j^c + \frac{\varepsilon }{\mathcal{C} }  \sum_{j=2}^n 2^{-j} \lec C_n, 
\] 
and using this to perform the $H^6$ estimate on $\omega^n$. Moreover, \eqref{vel=1} follows from \eqref{vel=1a}, and \eqref{glue1d} follows from a computation similar to \eqref{vel=1a},
\[
\| \omega^n \|_p \lec_p \sum_{j\geq 1} \| \omega_{\lambda_j} \|_p + \sum_{j= 2}^n \| A^n \|_{H^5} \lec_p 1
\]
for all $n\geq 1$, where we recalled the third property of \eqref{black_box_claims_repeat} in the last inequality.\\

\noindent\texttt{Step 2.} We take a limit $n\to \infty$ to obtain a solution $\omega$ to the Euler equations \eqref{3d_euler_vorticity} with initial data 
\eqnb\label{initial_data} \omega_0 \coloneqq \sum_{j=1}^\infty \omega_{\lambda_j} (\cdot , 0),
\eqne
such that
\eqnb\label{limit_prop1}
\begin{split}
 \left\| \omega  - \sum_{j\geq 1} \omega_{\lambda_j}  \right\|_{H^5} &\leq  \frac{\varepsilon }{\mathcal{C} } ,\\
 \supp\,\omega   &\subset \bigcup_{j\geq 1} B(D_j e_1, 1) ,\\
 |\supp\,\omega  | &\leq 1 ,\\
 \| \omega \|_{p }  &\leq C_p \qquad \text{ for every }p\in [1,\infty ),\\
 \| v[\omega ] \|_\infty &\leq 1 .
 \end{split}
\eqne

To this end we note that, for every compact $K\subset \R^3$, $\supp\, \omega_{\lambda_m} \cap K = \emptyset$ for sufficiently large $m$, and so \eqref{glue1a} shows that, for each such $m$ and any $n\geq m$,
\[
\| \omega^n - \omega^{m} \|_{H^5(K)} \leq \sum_{j=m+1}^n\| \omega^j - \omega^{j-1} \|_{H^5(K)} \lec \sum_{j\geq {m+1}} 2^{-j} \lec 2^m.
\]
Thus $\{ \omega^n \}_{n\geq 1}$ is Cauchy in $C ([0,T] ; H^5(K))$, and so there exists $\omega\in C ([0,T] ; H^5_{loc}(K))$ such that $\| \omega - \omega^n \|_{C ([0,T] ; H^5(K))} \to 0$ as $n\to \infty$ for each compact $K\subset \R^3$. Recalling the uniform bounds $\| \omega^n \|_1, \| \omega^n \|_4 \lec 1$ (from \eqref{glue1d}), we also obtain a convergence of velocities, i.e. that that $\| v[\omega ] - \omega^n \|_{C ([0,T] ; H^4(K))} \to 0$. In  particular we can take the limit $n\to \infty$ in the weak formulation of the Euler equations \eqref{3d_euler_vorticity} for $\omega^n$ to obtain that $\omega$ also solves the Euler equations on $\R^3\times [0,T]$ in the sense of distributions. The equations imply in particular that $\p_t \omega \in C([0,T]; C^2_{loc})$, so that $\omega \in C^1 ([0,T]; C^2_{loc})$, as required by Definition~\ref{defn_sol}. Taking $n\to \infty $ in the properties \eqref{glue1a}--\eqref{glue1d} implies \eqref{limit_prop1}, concluding this step. \\

\noindent\texttt{Step 3.} We verify instantaneous loss of regularity for $\omega$.\\

Indeed, 
 \[
   \begin{split}
   \|\omega \|_{H^{\beta}}&= \left\|\sum_{j\geq 1}\omega_{\lambda_{j}}\right\|_{H^{\beta}}+ O( \varepsilon )\\
   &= \left( \sum_{j\geq 1}\left\|\omega_{K_{j},\lambda_{j}}\right\|_{H^{\beta}}^2 + O(1)  \right)^{\frac{1}{2}}+O(\varepsilon ),
   \end{split}
   \]
     where we used the first property of \eqref{limit_prop1} in the first equality, and \eqref{slobo_equiv} together with $\sum_{j\geq 1} \| \omega_{\lambda_j} \|_2 \lec 1$ (a~consequence of \eqref{black_box_claims_repeat}) in the second equality.

We now consider $\beta <s$, and note that \eqref{black_box_claims} and the above equality imply that
\eqnb\label{ooo}
\| \omega (\cdot , t ) \|_{H^\beta } < \infty \quad \Leftrightarrow \quad \sum_{j\geq 1} K_{j}^{-t(1+\beta )/s} \lambda_{j}^{c_{3}(\beta-s)+ t(1+\beta ) c_4 } (\log \lambda_j )^{-t (1+ \beta ) /s } <\infty .
\eqne
   Thus, $ \| \omega (\cdot , t ) \|_{H^\beta } < \infty$ iff  $c_{3}(\beta-s)+ t(1+\beta ) c_4 \leq 0$,  since the $\lambda_j$'s dominate the $K_j$'s, due to \eqref{choice_lambdaj}. This gives the loss of regularity claim in Theorem~\ref{thm_main}, since
   \[
   c_{3}(\beta-s)+ t(1+\beta ) c_4 \leq 0 \quad \Leftrightarrow \quad \beta \leq  \frac{s-  \overline{c}t}{1+\overline{c}t},
   \]
   where $\overline{c} \coloneqq c_4/c_3$.\\

\noindent\texttt{Step 4.} We show uniqueness.\\

Suppose that $p\in (3,\infty )$ and that $\wo \in L^\infty_t L^p_x$ is another classical solution (as in Definition~\ref{defn_sol}) of the Euler equation \eqref{3d_euler_vorticity} with initial data \eqref{initial_data}.

We first show that the support of $\wo$ remains localized, that is
\eqnb\label{supp_wo}
\supp \wo \subset \weta (\supp \wo (\cdot ,0),t),
\eqne
where $\weta (y,t)$ denotes the particle trajectories of $v[\wo ]$, i.e. the solution of
\[
\p_t \weta (y,t ) = v[\wo ] (\weta (y,t),t),
\]
with initial condition $\weta (y,0)=y$. Indeed, by Definition~\ref{defn_sol} both $v[\wo ]$ and $\weta $ are well-defined and $C^1_t ([0,T];C^1 (K))$ for any compact set $K\subset \R^3$. For a given $y\in \R^3$ we set  $f(t) \coloneqq \wo (\weta (y,t),t)$, $g(t)\coloneqq v[\wo ] (\weta (y,t),t)$. We have that
\[
f'(t) =f(t)\cdot g(t).
\]
Thus, if $y\not \in \supp \wo (\cdot ,0)$ then $f(0)=0$, and so, since $f\cdot g\in C^1 ([0,T])$, uniqueness of ODE solutions gives that $f(t)=0$ for all $t$. In other words $\wo (x,t)=0$ if $\weta^{-1}(x,t) \not \in \supp\,\wo(\cdot ,0)$, which shows \eqref{supp_wo}.

As a consequence of \eqref{supp_wo} we have that  $|\supp\, \wo (\cdot , t )|\leq 1$ for all $t$, so that, by \eqref{vLinfty_into_omegaL^p} that
\[
\| v[\wo ] \|_{L^\infty} \leq v_{\rm max}
\]
for all $t\in [0,T]$, where $v_{\rm max} >0$.

Let 
\[
W \coloneqq \omega - \wo, \qquad W_j \coloneqq \omega_j - \wo_j,
\]
where $\omega_j(\cdot , t) \coloneqq\left.  \omega (\cdot ,t) \right|_{B(D_je_1,1)}$ and $\wo_j (\cdot ,t ) \coloneqq \left. \wo (\cdot ,t ) \right|_{\weta (B(D_je_1,1),t)}$.

Let $T_0 \in [0,T]$ be the largest time such that $\omega = \wo $ for $t <T_0$. Let $\epsilon = \epsilon (\wo )>0 $ be such that 
\[
\supp\, \wo_j (\cdot , t ) \subset B(D_j e_1, 2) \qquad \text{ for } t\in [0,T_0+\epsilon ]\text{ and all }j,
\]
and that
\eqnb\label{L1_control}
\| W (t) \|_{L^1} \leq 1\qquad \text{ for } t\in [T_0,T_0+\epsilon ].
\eqne
Note that also
\eqnb\label{L1_omega_control}
\| \omega (t) \|_{L^1}  \leq\| \omega (t) \|_{L^4}  \leq c \qquad \text{ for } t\in [T_0,T_0+\epsilon ],
\eqne
for some constant $c>0$ (independent of $\wo$). Note that, for each $j\geq 1$,
\eqnb\label{eq_Wj}
\p_t W_j  + v[\wo ] \cdot \nabla W_j + v [W] \cdot \nabla \omega_j - \omega_j \cdot \nabla u [W] - W_j \cdot \nabla u [\wo ] =0.
\eqne

We will show that $\| W \|_{H^2}=0$ on $[T_0,T_0+\epsilon ]$ (which will then contradict the definition of $T_0$). To this end, we will perform $H^2$ estimates on \eqref{eq_Wj} to obtain an ODE for $\| W \|_{H^2}$. However, we can only use the ODE to obtain an upper bound on $\| W \|_{H^2}$ when we know that $W \in C([T_0,T_0+\epsilon ];H^2)$. This is particularly relevant since a priori there is no reason why $W(t) \in H^2$ (note that neither $\omega$ nor $\wo$ belong to $H^2$). We thus verify this first (in Step 4a below), and then conclude the uniqueness proof in Step 4b.\\

\noindent\texttt{Step 4a.} We show that, if $D_j$ diverges sufficiently fast, then $\| W_j \|_{H^2} \leq 2^{-j}$ for $t\in [T_0 , T_0+\epsilon ]$, $j\geq 1$.\\

Using \eqref{eq_Wj} we get rid of $\wo $ by writing $\wo = \omega - W$ and we decompose both $\omega $ and $W$ into local parts $\omega_j$, $W_j$ and nonlocal parts $\omega_{\ne j}\coloneqq \omega - \omega_j$, $W_{\ne j}\coloneqq W -W_j$, respectively, to obtain
\[
\begin{split}
0 & = \p_t W_j + v [\omega_j  ] \cdot \nabla W_j+ v [\omega_{\ne j}  ] \cdot \nabla W_j + v[W_j  ] \cdot \nabla W_j +v[W_{\ne j}  ] \cdot \nabla W_j  + v[ W_j ] \cdot \nabla \omega_j + v[ W_{\ne j} ] \cdot \nabla \omega_j\\
& \quad  - \omega_j \cdot \nabla v [W_j]- \omega_j \cdot \nabla v [W_{\ne j}] - W_j \cdot \nabla v [\omega_j   ]- W_j \cdot \nabla v [\omega_{\ne j}   ]+ W_j \cdot \nabla v [W_j  ]+ W_j \cdot \nabla v [W_{\ne j}  ].
\end{split}
\]
Rearranging terms into those involving $W_j$ or not, we get
\eqnb\label{rearr}
\begin{split}
0 & = \p_t W_j + \left(\frac{\mbox{}}{\mbox{}} v [\omega_j  ] \cdot \nabla W_j + v [\omega_{\ne j}  ] \cdot \nabla W_j + v[W_j  ] \cdot \nabla W_j +v[W_{\ne j}  ] \cdot \nabla W_j  + v[ W_j ] \cdot \nabla \omega_j - \omega_j \cdot \nabla v [W_j] \right. \\
&\quad \left. - W_j \cdot \nabla v [\omega_j   ]- W_j \cdot \nabla v [\omega_{\ne j}   ]+ W_j \cdot \nabla v [W_j  ]+ W_j \cdot \nabla v [W_{\ne j}  ] \right) \\
&\quad + \left( \frac{\mbox{}}{\mbox{}}v[ W_{\ne j} ] \cdot \nabla \omega_j  - \omega_j \cdot \nabla v [W_{\ne j}] \right)  .
\end{split}
\eqne
We now observe that performing the $H^2$ estimate we obtain, for each term in the first bracket, a bound of the form
\eqnb\label{temp01}
\| W_j \|_{H^2} \left( \frac{\mbox{}}{\mbox{}}\left( \| v[ \omega_j ] \|_{W^{3,\infty}} + \| \omega_j \|_{W^{3,\infty}}  \right) + \left( \| v [\omega_{\ne j} ]\|_{W^{3,\infty}(B(D_je_1,2))}+\| v [W_{\ne j} ]\|_{W^{3,\infty}(B(D_je_1,2))} \right) + \| W_j \|_{H^2}\frac{\mbox{}}{\mbox{}}\right). 
\eqne
Note that the first two terms in the bracket in \eqref{temp01} can be bounded by a constant dependent on $j$ only, since
\[
\| v [\omega_j ] \|_{W^{3,\infty }} \lec 1+ \| \nabla v [\omega_j ] \|_{W^{2,\infty }}\lec 1+ \| \omega_j  \|_{W^{3,\infty }} \leq 1+ \| \omega_{\lambda_j}  \|_{W^{3,\infty }} + \| \omega - \omega_{\lambda_j} \|_{H^5 (B(D_je_1,2))}  \lec C_j
\] 
 by \eqref{vel=1}, \eqref{black_box_claims}, and by bootstrapping \eqref{glue1a}. Moreover, the third and fourth terms in the bracket in \eqref{temp01} can be bounded by a constant multiple of $D_j^{-1}\leq 1$ (due to $D_j\geq 1$ and \eqref{L1_control}--\eqref{L1_omega_control}).  Similarly, the nonlocal terms in \eqref{rearr} (i.e. the  terms in the last bracket in \eqref{rearr}) can be bounded in $H^2$ by $C_j D_j^{-1}$.\\

As for the remaining term in \eqref{temp01}, let $\epsilon_j \in (0,  \epsilon]$ be the largest such $\epsilon_j$ for which  $\| W_j \|_{H^2} \leq 2^{-j}$ for $t\in [T_0,T_0+\epsilon_j]$. Such $\epsilon_j$ exists as $W_j (\cdot ,T_0)=0$ and and $W_j \in C ([T_0,T_0+\epsilon ]; H^2 )$ for each $j$ since the supports of both $\omega_j$ and $\wo_j$ are included in $B(D_je_1,2)$ and $\omega , \wo \in C ([0,T] ; C^2_{loc})$. 

Thus, on $[T_0,T_0+\epsilon_j]$ we obtain
\[
\frac{\d }{\d t} \| W_j \|_{H^2} \leq C_j \left( \| W_j \|_{H^2} + D_j^{-1} \right),
\]
which, together with $ W_j (\cdot ,T_0) =0$ gives that $\| W_j \|_{H^2} \leq 2^{-(j+1)}$ for $t\in [T_0,T_0+\epsilon_j]$, provided $\{ D_j \}$ increases sufficiently fast. This shows that $\epsilon_j = \epsilon$, as otherwise (if $\epsilon_j <\epsilon$) a continuity argument contradicts the definition of $\epsilon_j$. Thus 
\[
\| W_j \|_{H^2} \leq 2^{-j} \qquad \text{ for }t\in [T_0,T_0+\epsilon ],\, j\geq 1,
\]
as requried.\\

\noindent\texttt{Step 4b.} We show that $W(\cdot ,t)=0$ for $t\in [T_0,T_0+\epsilon ]$.\\
(Note that this concludes the uniqueness proof, as it contradicts the definition of $T_0$.)\\

Let 
\[
\delta \coloneqq \max_{[T,T+\epsilon ]} \| W \|_{H^2}.
\]
Note that $\delta \in (0,1]$ by the definition of $T_0$ and by Step 4a.

We perform an $H^2$ estimate of \eqref{temp01} similarly as in Step 4a above, except for the last bracket on the right-hand side of \eqref{temp01}, which we now estimate as
 \[
 \left\|  \frac{\mbox{}}{\mbox{}}v[ W_{\ne j} ] \cdot \nabla \omega_j  - \omega_j \cdot \nabla v [W_{\ne j}] \right\|_{H^2} \lec \| W_{\ne j} \|_{H^2} \| \omega_j \|_{W^{3,\infty } } D_j^{-1} \lec C_j D_j^{-1} \delta ,
 \]
 and so we obtain
 \[
 \frac{\d }{\d t} \| W_j \|_{H^2} \lec C_j \| W_j \|_{H^2} + C_j D_j^{-1} \delta  
 \]
 so that 
 \[
 \| W_j \|_{H^2} \leq C_j D_j^{-1} \delta \ee^{C_j \epsilon}\leq C_j D_j^{-1} \delta \ee^{C_j T }\qquad \text{ for } t\in [T_0,T_0+\epsilon ].
 \]
 If $D_j$ increases sufficiently fast so that the right-hand side is bounded by $\delta/2^j$  (which is a choice independent of $\wo$) we thus have that
 \[
 \| W_j\|_{H^2}^2 \leq \frac{\delta^2}{4^j}
 \]
for $t\in [T_0,T_0+\epsilon ]$. Summing in $j\geq 1$ we have $\| W \|_{H^2}^2 \leq {\delta^2}/3 $
for $t\in [T_0,T_0+\epsilon ]$. Taking $\sup_{[T_0,T_0+\epsilon]}$ gives $\delta^2 \leq \delta^2/3$, a contradiction. \qedsymbol

\section*{Acknowledgements}
LMZ is supported in part by the Spanish Ministry of Science and Innovation by the grant 152878NB-I00 and by the SNF grant FLUTURA:Fluids, Turbulence, Advection No. 212573.
WO was supported by the NSF grant no. DMS-2511556.
IJ was supported by the NRF grant from the Korea government (MSIT), No. 2022R1C1C1011051, RS-2024-00406821. 

\bibliographystyle{siam}
\bibliography{literature}

\end{document}